\documentclass[11pt]{amsart}
\usepackage[margin=1.1in]{geometry}
\usepackage{amsmath,amssymb,amsthm,mathtools}
\usepackage{enumitem}
\usepackage{booktabs}
\usepackage{float}
\usepackage{array}
\usepackage{tikz}
\usetikzlibrary{arrows.meta,positioning,calc,decorations.pathmorphing,patterns}
\usepackage[colorlinks=true,linkcolor=blue!50!black,citecolor=green!40!black,urlcolor=blue!50!black]{hyperref}

\newtheorem{theorem}{Theorem}[section]
\newtheorem{proposition}[theorem]{Proposition}
\newtheorem{lemma}[theorem]{Lemma}
\newtheorem{corollary}[theorem]{Corollary}

\theoremstyle{definition}
\newtheorem{definition}[theorem]{Definition}
\newtheorem{example}[theorem]{Example}
\newtheorem{remark}[theorem]{Remark}

\newcommand{\Sp}{\operatorname{Sp}}
\newcommand{\Ins}{\operatorname{Ins}}
\newcommand{\Cm}{\operatorname{Cm}}
\newcommand{\Ury}{\operatorname{Ury}}
\newcommand{\Fun}{\operatorname{Fun}}
\newcommand{\cl}{\operatorname{cl}}
\newcommand{\Cl}{\operatorname{Cl}}
\newcommand{\Nins}{\mathcal{N}_{\mathrm{ins}}}
\newcommand{\Ncm}{\mathcal{N}_{\mathrm{cm}}}
\newcommand{\Dow}{\mathcal{D}}
\newcommand{\Homeo}{\operatorname{Homeo}}
\newcommand{\Aut}{\operatorname{Aut}}
\newcommand{\Ob}{X_{\mathrm{o}}}
\newcommand{\Cb}{X_{\mathrm{c}}}
\newcommand{\sq}{^{[2]}}
\newcommand{\sm}{\boxtimes}
\newcommand{\join}{\circledast}
\newcommand{\R}{\mathbb{R}}

\newcommand{\eqadj}[1]{\mathrel{\simeq_{#1}}}

\title[Graphs on topological spaces]{Graphs on topological spaces}
\author[P. J. Cameron and Vinod Kumar P. B.]{%
Peter J. Cameron$^{\,\mathrm{a}}$ and Vinod Kumar P. B.$^{\,\mathrm{b},*}$\\
$\vcenter{\hbox{\normalfont\footnotesize
\begin{tabular}{c}
\mbox{}\\
$^{\mathrm{a}}$\,School of Mathematics and Statistics, University of St Andrews, St Andrews, UK\\[2pt]
$^{\mathrm{b}}$\,Department of Mathematics, Muthoot Institute of Technology and Science, Kochi, Kerala, India
\end{tabular}}}$}
\thanks{$^{*}$Corresponding author.\newline
\emph{E-mail addresses:} \texttt{pjc20@st-andrews.ac.uk} (P. J. Cameron), \texttt{vinodkumarpb@mgits.ac.in} (Vinod Kumar P. B.).}

\subjclass[2020]{05C25, 05C63, 20B27, 54A10, 54D10, 54A05, 06A07, 55U10, 05C62}
\keywords{inseparability graph, primitive permutation groups, neighbourhood filters, Rado graph, specialization order, separation axioms, finite topological spaces, Alexandrov spaces, Dowker duality, nerve lemma, intersection graphs}

\begin{document}

\begin{abstract}
Graphs defined on groups (power graphs, enhanced power graphs, commuting graphs, generating graphs) form a hierarchy under inclusion of edge sets, and the comparison between members of the hierarchy has proved to be a rich source of problems. In this paper we develop an analogous theory for topological spaces. To a topological space $X$ we attach five graphs with vertex set $X$, all invariant under the homeomorphism group of $X$: the \emph{specialization graph} $\Sp(X)$, the \emph{inseparability graph} $\Ins(X)$, the \emph{closure-meeting graph} $\Cm(X)$, the \emph{Urysohn graph} $\Ury(X)$ and the \emph{functional graph} $\Fun(X)$. They form a diamond-shaped hierarchy whose members are null precisely when $X$ satisfies the separation axioms $T_1$, $T_2$, $T_{2\frac12}$ and ``functionally Hausdorff''. We prove that $\Ins(X)^{[2]}\subseteq \Ury(X)$ always, with equality for Alexandrov spaces; that the hierarchy is compatible with products via the strong product of graphs; and that every countable graph is the non-trivial part of the inseparability graph of a second countable $T_1$ space, whose Urysohn graph is then the square of the given graph and whose functional graph is its component relation. For finite spaces we characterize exactly which graphs arise as inseparability graphs (those in which every edge is covered by the closed neighbourhood of a simplicial vertex), determine all coincidences between members of the hierarchy, compute independence and clique cover numbers, and prove a homotopy theorem: via Dowker duality the complexes of inseparable and of closure-meeting families are always homotopy equivalent, and under a mild acyclicity hypothesis they recover the weak homotopy type of the space. The clique complex of the specialization graph always recovers the weak homotopy type, while the inseparability graph does not. Finally, following Cameron's work on topologies and filters preserved by permutation groups, we study the hierarchy for topologies invariant under a transitive group $G$: every graph in it is null or connected when $G$ is primitive, all five graphs coincide when the space is finite, primitivity and strong primitivity correspond to the vanishing of the Kolmogorov and specialization structures, and the neighbourhood filters of the five graphs are reverse-ordered along the hierarchy, which yields spaces whose specialization graph contains the Rado graph as a spanning subgraph. 
\end{abstract}

\maketitle

\section{Introduction}

In the last two decades there has been an explosion of research on graphs whose vertex set is an algebraic structure and whose edges reflect that structure: power graphs, enhanced power graphs, commuting graphs and generating graphs of groups; zero-divisor graphs of rings; subspace inclusion and subspace sum graphs of vector spaces; and many others. Cameron's survey~\cite{Cam-groups} emphasises a point of view which has been very productive: rather than studying one such graph in isolation, one should arrange several of them in a \emph{hierarchy} (for a finite group $G$, the power graph, the enhanced power graph, the commuting graph and the non-generating graph each being a spanning subgraph of the next) and ask when two members coincide, what their differences look like, which graphs occur as induced subgraphs, and how the graphs behave under products. Equalities in the hierarchy then characterize interesting classes of groups (EPPO groups, groups with cyclic or generalized quaternion Sylow subgroups, minimal non-abelian groups, Dedekind groups, $2$-Engel groups~\cite{Cam-groups,ACNS,Cam-AKCE}). The same philosophy has been applied to vector spaces~\cite{CDD} and, via ``super graphs'' built from equivalence relations, to a second dimension of the group hierarchy~\cite{ACNS}. In~\cite{Cam-AKCE} Cameron asks, more broadly, what graphs and algebraic structures can say to each other, and in~\cite{Cam-alg} he extends the theory from groups to arbitrary algebras in the sense of universal algebra: graphs whose definition only involves the subalgebra generated by a set of elements, such as the power graph, the enhanced power graph and the generating graph, make sense in any algebra, while others, such as the commuting graph, do not seem to generalize. Recent work on the cograph structure of enhanced power graphs~\cite{BFP} shows that the interplay between such graphs and classical graph classes remains an active topic.

The purpose of this paper is to carry this programme over to \emph{topological spaces}. At first sight a topology gives no binary operation from which to build a graph, but it does give a wealth of binary relations between points, namely the various ways in which two points may fail to be separated. The classical separation axioms $T_1$, $T_2$ (Hausdorff), $T_{2\frac12}$ (Urysohn) and ``functionally Hausdorff'' are statements that certain of these relations are trivial. We turn each relation into a graph. Concretely, for a topological space $X$ we define (Definition~\ref{def:graphs}) five graphs on the vertex set $X$:
\begin{itemize}[leftmargin=2em]
\item the \emph{specialization graph} $\Sp(X)$: $x\sim y$ if $x\in\cl\{y\}$ or $y\in\cl\{x\}$;
\item the \emph{inseparability graph} $\Ins(X)$: $x\sim y$ if $x,y$ have no disjoint neighbourhoods;
\item the \emph{closure-meeting graph} $\Cm(X)$: $x\sim y$ if $\cl\{x\}\cap\cl\{y\}\neq\emptyset$;
\item the \emph{Urysohn graph} $\Ury(X)$: $x\sim y$ if $x,y$ have no neighbourhoods with disjoint closures;
\item the \emph{functional graph} $\Fun(X)$: $x\sim y$ if $f(x)=f(y)$ for every continuous $f\colon X\to\R$.
\end{itemize}
These are the topological counterparts of, respectively, the power graph, the dual enhanced power graph, the enhanced power graph, the commuting graph and the non-generating graph. Indeed, for a finite space the specialization order plays the role of the ``is a power of'' preorder; $\Sp(X)$ is its comparability graph, exactly as the power graph is the comparability graph of the directed power graph. In fact, in the language of~\cite{Cam-alg}, where the power graph of an algebra joins $x$ and $y$ when $x\in\langle y\rangle$ or $y\in\langle x\rangle$, the specialization graph is obtained by replacing the subalgebra closure operator $\langle\,\cdot\,\rangle$ by the topological closure operator $\cl$; $\Cm(X)$ joins two points having a common upper bound, as the enhanced power graph joins two elements having a common ``root''; and $\Ins(X)$ joins two points having a common lower bound, as the dual enhanced power graph of~\cite[\S2.4]{Cam-groups} joins two elements whose cyclic subgroups meet non-trivially. Every homeomorphism of $X$ is an automorphism of each of the five graphs, which is the analogue of the requirement in~\cite{Cam-groups} that graphs on a group be invariant under $\Aut(G)$.

\subsection*{Summary of results}
Our main results are the following.

\smallskip\noindent\textbf{1. The separation hierarchy (Theorem~\ref{thm:hierarchy}).} For every space $X$,
\[
E(\Sp(X))\subseteq E(\Ins(X))\cap E(\Cm(X)),\qquad E(\Ins(X))\cup E(\Cm(X))\subseteq E(\Ury(X))\subseteq E(\Fun(X)),
\]
and $\Ins$ and $\Cm$ are in general incomparable (Figure~\ref{fig:diamond}). The graphs $\Sp,\Cm,\Ins,\Ury,\Fun$ are null if and only if $X$ is $T_1$, $T_1$, $T_2$, $T_{2\frac12}$, functionally Hausdorff, respectively. Moreover $\Fun(X)$ is a disjoint union of cliques, every point of $\Ins(X)$ at distance at most $2$ from another is joined to it in $\Ury(X)$, i.e.\ $\Ins(X)\sq\subseteq\Ury(X)$, and all five graphs of $X$ are obtained from those of the Kolmogorov quotient by blowing vertices up into cliques of twins. For Alexandrov spaces (in particular finite spaces) we have $\Ury(X)=\Ins(X)\sq$ and the classes of $\Fun(X)$ are the connected components; an explicit Hausdorff, non-Urysohn space shows that $\Ury\ne\Ins\sq$ in general (Example~\ref{ex:HnotU}).

\smallskip\noindent\textbf{2. Products and joins (Theorem~\ref{thm:products}).} For arbitrary spaces, $\Gamma(X\times Y)=\Gamma(X)\sm\Gamma(Y)$ for $\Gamma\in\{\Ins,\Cm,\Ury,\Fun\}$ (strong product), whereas $\Sp(X\times Y)=\Sp(X)\sm\Sp(Y)$ holds if and only if $X$ or $Y$ is $T_1$. We also compute the graphs of non-Hausdorff joins.

\smallskip\noindent\textbf{3. Universality (Theorem~\ref{thm:XGamma}).} For every countable graph $\Gamma$ we construct a second countable $T_1$ space $X_\Gamma$ such that $\Ins(X_\Gamma)$ is $\Gamma$ together with isolated vertices, $\Ury(X_\Gamma)$ is $\Gamma\sq$ together with isolated vertices, and the non-trivial classes of $\Fun(X_\Gamma)$ are the connected components of $\Gamma$. When $\Gamma$ is $2$-regular, $X_\Gamma$ is a non-Hausdorff $1$-manifold.

\smallskip\noindent\textbf{4. Realization for finite spaces (Theorem~\ref{thm:realization}).} A finite graph is isomorphic to $\Ins(X)$ for some finite space $X$ if and only if each of its edges lies in the closed neighbourhood of a simplicial vertex. Every $T_0$ realization of such a graph has exactly as many open points as there are classes of simplicial vertices, and this number equals both the independence number and the clique cover number. Consequently paths and cycles on at least four vertices, and trees of diameter at least three, are never inseparability graphs, although every finite graph is an induced subgraph of one. The smallest finite spaces with non-chordal and non-perfect inseparability graphs have $8$ and $10$ points (Proposition~\ref{prop:holes}).

\smallskip\noindent\textbf{5. Equality in the hierarchy (Theorem~\ref{thm:equality}).} For finite spaces we determine exactly when two members of the hierarchy coincide. For instance $\Ins(X)=\Ury(X)$ if and only if $\Ins(X)=\Fun(X)$, if and only if every connected component of $X$ is irreducible; $\Ins(X)=\Cm(X)$ if and only if each component contains exactly one open and exactly one closed point; and $\Sp(X)=\Ury(X)$ if and only if every component is a chain.

\smallskip\noindent\textbf{6. Homotopy (Theorem~\ref{thm:homotopy}).} Let $\Nins(X)$ and $\Ncm(X)$ be the simplicial complexes of finite sets of points which are ``jointly inseparable'' (their minimal open sets have a common point) and ``jointly closure-meeting'', respectively; their $1$-skeleta are $\Ins(X)$ and $\Cm(X)$. Using Dowker duality we prove that $\Nins(X)\simeq\Ncm(X)$ for every finite space $X$, even though $\Ins(X)$ and $\Cm(X)$ may be very different, and that under an acyclicity hypothesis on intersections of closures of open points (satisfied, e.g., by face posets of simplicial complexes) both are homotopy equivalent to the order complex, hence weakly equivalent to $X$ itself. By contrast the clique complex of $\Sp(X)$ is always the order complex of $X$, so that finite $T_0$ spaces with isomorphic specialization graphs are weakly homotopy equivalent (Corollary~\ref{cor:Sp-weak}); this fails for inseparability graphs (Example~\ref{ex:same-Ins}).

\smallskip\noindent\textbf{7. Group actions and filters (Theorems~\ref{thm:invariant} and~\ref{thm:filters}).} Cameron~\cite{Cam-top,CamV} related primitivity of a permutation group to separation properties of invariant topologies, and used the graph joining points with disjoint neighbourhoods (the complement of our $\Ins$) and neighbourhood filters of graphs, whose non-triviality characterizes graphs containing the Rado graph as a spanning subgraph. We show that for a topology invariant under a primitive group each of the five graphs is null or connected, that for finite spaces with transitive homeomorphism group all five graphs coincide, and that an invariant non-discrete Alexandrov $T_0$ topology has no open and no closed points. We prove that the neighbourhood filters of the five graphs are reverse-ordered along the hierarchy, that non-triviality of any of them forces all real functions to be constant, and that the comparability graph of a downward directed poset without minimal elements (for instance the generic poset) contains the Rado graph as a spanning subgraph.

\smallskip\noindent\textbf{8. Cographs and chordality (Proposition~\ref{prop:cograph}).} Motivated by the theorem of Bubboloni, Fumagalli and Praeger~\cite{BFP} that a cograph enhanced power graph is chordal, we prove that if $\Cm(X)$ is a cograph then it is chordal, for every space $X$, and that the same holds for $\Ins(X)$ when $X$ is Alexandrov but not in general. If $\Sp(X)$ coincides with $\Ins(X)$ or $\Cm(X)$, then it is trivially perfect.

\smallskip
The analogy with the group case is summarized in Table~\ref{tab:dictionary}. Section~\ref{sec:prelim} collects preliminaries. Section~\ref{sec:hierarchy} introduces the graphs and proves the hierarchy theorem; Section~\ref{sec:products} treats products and joins; Section~\ref{sec:universal} contains the universality construction. Sections~\ref{sec:finite}--\ref{sec:homotopy} concern finite spaces: invariants, realization, equality and homotopy. Section~\ref{sec:groups} treats topologies invariant under permutation groups and neighbourhood filters. Section~\ref{sec:data} reports computations.

\begin{table}[H]
\centering
\small
\begin{tabular}{@{}>{\raggedright\arraybackslash}p{0.46\textwidth}>{\raggedright\arraybackslash}p{0.46\textwidth}@{}}
\toprule
Graphs on a finite group $G$~\cite{Cam-groups} & Graphs on a topological space $X$ (this paper)\\
\midrule
directed power graph (preorder ``$y$ is a power of $x$'') & specialization preorder ($y\in\cl\{x\}$)\\
power graph (comparability graph) & specialization graph $\Sp(X)$\\
enhanced power graph (common root) & closure-meeting graph $\Cm(X)$ (common upper bound)\\
dual enhanced power graph ($\langle x\rangle\cap\langle y\rangle\neq 1$) & inseparability graph $\Ins(X)$ (common lower bound)\\
commuting graph & Urysohn graph $\Ury(X)$\\
non-generating graph & functional graph $\Fun(X)$\\
$\Aut(G)$-invariance & $\Homeo(X)$-invariance\\
equalities $\leftrightarrow$ EPPO, Dedekind, $2$-Engel groups & equalities $\leftrightarrow$ irreducible, chain components\\
power graphs are perfect & specialization graphs are perfect\\
power graph determines directed power graph~\cite{Cam-power2} & $\Sp(X)$ does \emph{not} determine $X$ (Remark~\ref{rem:Sp-not-determine})\\
Gruenberg--Kegel graph determined by the graphs & weak homotopy type determined by $\Sp(X)$\\
cograph enhanced power graphs are chordal~\cite{BFP} & cograph $\Cm(X)$ is chordal (Proposition~\ref{prop:cograph})\\
power graph of an algebra: $x\in\langle y\rangle$ or $y\in\langle x\rangle$~\cite{Cam-alg} & $\Sp(X)$: $x\in\cl\{y\}$ or $y\in\cl\{x\}$\\
permutation group preserving the graph & primitive $G$: invariant topologies are $T_0$~\cite{CamV}\\
Wielandt's strong primitivity & invariant topologies are $T_1$, i.e.\ $\Sp$ null~\cite{CamV}\\
neighbourhood filter, Rado graph~\cite{CamV} & filters $\mathcal F_{\Gamma(X)}$ reverse the hierarchy\\
\bottomrule
\end{tabular}
\caption{A dictionary between graphs on groups and graphs on spaces.}
\label{tab:dictionary}
\end{table}

\section{Preliminaries}\label{sec:prelim}

\subsection{Graphs}
All graphs are simple (loops are silently removed, as in~\cite{Cam-groups}) but may be infinite. For a graph $\Gamma$ we write $x\sim_\Gamma y$ (or $x\sim y$) for adjacency and
\[
x\eqadj{\Gamma} y \quad\Longleftrightarrow\quad x=y\ \text{ or }\ x\sim_\Gamma y .
\]
The \emph{closed neighbourhood} of $x$ is $N[x]=\{y: x\eqadj{\Gamma}y\}$ and the open neighbourhood is $N(x)=N[x]\setminus\{x\}$. A vertex $w$ is \emph{simplicial} if $N[w]$ is a clique. Two vertices are \emph{closed twins} if $N[x]=N[y]$. For a graph $\Gamma$ with metric $d_\Gamma$ (distance $\infty$ between components), the \emph{square} $\Gamma\sq$ has the same vertices, with $x\sim y$ iff $1\le d_\Gamma(x,y)\le 2$. The \emph{strong product} $\Gamma\sm\Delta$ has vertex set $V(\Gamma)\times V(\Delta)$ and $(x,y)\sim(x',y')$ iff $(x,y)\ne(x',y')$, $x\eqadj{\Gamma}x'$ and $y\eqadj{\Delta}y'$. We write $\Gamma\vee\Delta$ for the join (disjoint union plus all edges between the two parts), $K_S$ for the complete graph on a set $S$, and $\omega,\chi,\alpha,\theta$ for clique number, chromatic number, independence number and clique cover number. A graph is a \emph{cograph} if it has no induced $P_4$, \emph{chordal} if it has no induced cycle of length at least $4$, and \emph{perfect} if $\omega(\Delta)=\chi(\Delta)$ for every induced subgraph $\Delta$; by the Strong Perfect Graph Theorem~\cite{SPGT} this happens iff there is no induced odd cycle of length $\ge5$ or complement of one. The \emph{clique complex} $\Cl(\Gamma)$ is the simplicial complex whose simplices are the finite cliques of $\Gamma$.

\subsection{The specialization preorder}
Let $X$ be a topological space. For $x,y\in X$ we write
\begin{equation}\label{eq:spec}
x\le y \quad\Longleftrightarrow\quad \text{every open set containing $y$ contains $x$}\quad\Longleftrightarrow\quad y\in\cl\{x\}.
\end{equation}
This is the \emph{specialization preorder} (we follow the convention of Barmak~\cite{Barmak}, under which the closure of a point is its up-set). It is reflexive and transitive; it is antisymmetric iff $X$ is $T_0$, and it is the equality relation iff $X$ is $T_1$. Every continuous map is order preserving. Two points are \emph{topologically indistinguishable} if they have the same open neighbourhoods, i.e.\ $x\le y\le x$; the quotient $KX$ of $X$ by this equivalence is the \emph{Kolmogorov quotient}. The quotient map $q\colon X\to KX$ is open, closed and surjective, and every open or closed subset of $X$ is saturated for $q$; in particular $q^{-1}(\cl A)=\cl q^{-1}(A)$ for $A\subseteq KX$.

\subsection{Alexandrov and finite spaces}
A space is \emph{Alexandrov} if arbitrary intersections of open sets are open~\cite{Alexandroff}. Then every point $x$ has a smallest open neighbourhood
\[
U_x=\bigcap\{U: U\text{ open}, x\in U\}=\{y: y\le x\}={\downarrow}x ,
\]
the open sets are exactly the down-sets of $(X,\le)$ and the closed sets are exactly the up-sets. Because arbitrary unions of closed sets are closed, the closure of any subset is its up-set:
\begin{equation}\label{eq:closure}
\cl A={\uparrow}A=\bigcup_{a\in A}\cl\{a\},\qquad \cl\{x\}={\uparrow}x .
\end{equation}
In the terminology of Cameron~\cite{CamV}, Alexandrov topologies are the \emph{relational} topologies: those equal to the topology built from their own specialization preorder. This gives the classical equivalence between Alexandrov topologies and preorders, restricting to an equivalence between finite $T_0$ spaces and finite posets~\cite{Alexandroff,Stong,McCord,Barmak}. For a finite space we write $\Ob$ for the set of \emph{open points} ($\{x\}$ open, i.e.\ $x$ minimal) and $\Cb$ for the set of \emph{closed points} (maximal elements). For a finite $T_0$ space and $x\in X$ we put
\[
M(x)=U_x\cap \Ob,\qquad M^*(x)=\cl\{x\}\cap \Cb .
\]
Both are non-empty: $U_x$ is a finite non-empty poset, any minimal element of $U_x$ is minimal in $X$ because $U_x$ is a down-set, and dually. The \emph{height} of a finite $T_0$ space is the length of its longest chain. The \emph{opposite space} $X^{op}$ has the opposite order (open and closed sets interchanged). A finite $T_0$ space $X$ is \emph{irreducible} iff it has a generic point, i.e.\ a point $g$ with $\cl\{g\}=X$, iff it has a minimum.

\begin{remark}\label{rem:Sp-not-determine}
The first result proved about power graphs is Cameron's theorem~\cite{Cam-power2} that the power graph of a finite group determines the directed power graph up to isomorphism. In our dictionary (Table~\ref{tab:dictionary}) the directed power graph corresponds to the specialization preorder, which for a finite space carries the same information as the topology, and the power graph corresponds to the specialization graph $\Sp(X)$ of Definition~\ref{def:graphs}, the comparability graph of the specialization preorder. (Directed power graphs of arbitrary algebras are discussed in~\cite{Cam-alg}.) The analogue of Cameron's theorem is false. Since every finite poset is the specialization order of a finite $T_0$ space, every finite comparability graph is a specialization graph. But a poset is in general not determined by its comparability graph, even up to isomorphism and duality. Since $\Sp(X^{op})=\Sp(X)$ always, the best one could hope for is that $\Sp(X)$ determines $X$ up to homeomorphism and passage to the opposite space, and even this fails.

The smallest example has four points. Let $Y$ be the space with points $0,1,2,3$ ordered by $0<1<2$ and $1<3$; let $Y^{op}$ be its opposite; and let $D=\{0,1,2,3\}$ with $0<1<3$ and $0<2<3$ (the ``diamond'', which is homeomorphic to the product $\mathbb S\times\mathbb S$ of two Sierpi\'nski spaces). In each case exactly one pair of points is incomparable, so
\[
\Sp(Y)\cong\Sp(Y^{op})\cong\Sp(D)\cong K_4-e
\]
(Figure~\ref{fig:Sp-not-determine}). The three spaces are pairwise non-homeomorphic. $Y$ has one open point and two closed points, $Y^{op}$ has two open points and one closed point, and $D$ has one of each. Since $D^{op}\cong D$, none of the three is homeomorphic to the opposite of another. A direct check of the posets with at most three elements shows that four points is the minimum. Larger examples are easy to produce. For instance, if $V$ denotes the three-point space with one open point below two closed points, then $V\sqcup V$ and $V\sqcup V^{op}$ have isomorphic specialization graphs ($P_3\sqcup P_3$), but they are neither homeomorphic nor opposite to each other. In general the transitive orientations of a comparability graph are obtained by reversing implication classes independently, in the sense of Gallai~\cite{Gallai}, so a comparability graph typically has many non-isomorphic transitive orientations.

Two partial positive results remain. First, $\Sp(X)$ does determine the weak homotopy type of a finite $T_0$ space (Corollary~\ref{cor:Sp-weak}), as well as its height and width (Proposition~\ref{prop:Sp}). Accordingly, all three spaces above are contractible, since each has a minimum. Secondly, \emph{if $X$ is a connected finite $T_0$ space with at least two points and $\Sp(X)$ is triangle-free, then $\Sp(X)$ determines $X$ up to homeomorphism and passage to $X^{op}$.} Indeed, a chain of three points would give a triangle, so $X$ has height at most $1$. Since $X$ is connected with at least two points, no point is both open and closed, so $\Ob$ and $\Cb$ partition $X$ and form a bipartition of the connected bipartite graph $\Sp(X)$. Such a bipartition is unique up to interchanging the two parts. Once the part $\Ob$ is chosen, the order is recovered as $x<y$ iff $x\in\Ob$ and $x\sim y$. Choosing the other part gives $X^{op}$.
\end{remark}

\begin{figure}[ht]
\centering
\begin{tikzpicture}[scale=0.9,pt/.style={circle,fill,inner sep=1.5pt},every label/.style={font=\small}]
\begin{scope}
\node[pt,label=right:$0$] (0) at (0,0) {}; \node[pt,label=right:$1$] (1) at (0,1) {};
\node[pt,label=above:$2$] (2) at (-0.6,2) {}; \node[pt,label=above:$3$] (3) at (0.6,2) {};
\draw (0)--(1)--(2) (1)--(3);
\node at (0,-0.8) {$Y$};
\end{scope}
\begin{scope}[xshift=2.8cm]
\node[pt,label=below:$2$] (2) at (-0.6,0) {}; \node[pt,label=below:$3$] (3) at (0.6,0) {};
\node[pt,label=right:$1$] (1) at (0,1) {}; \node[pt,label=right:$0$] (0) at (0,2) {};
\draw (2)--(1)--(0) (3)--(1);
\node at (0,-0.8) {$Y^{op}$};
\end{scope}
\begin{scope}[xshift=5.6cm]
\node[pt,label=right:$0$] (0) at (0,0) {}; \node[pt,label=left:$1$] (1) at (-0.7,1) {};
\node[pt,label=right:$2$] (2) at (0.7,1) {}; \node[pt,label=right:$3$] (3) at (0,2) {};
\draw (0)--(1)--(3) (0)--(2)--(3);
\node at (0,-0.8) {$D\cong\mathbb S\times\mathbb S$};
\end{scope}
\draw[-{Stealth},thick,gray] (7.0,1)--(8.2,1);
\begin{scope}[xshift=9.6cm]
\node[pt,label=left:$a$] (a) at (-0.8,1) {}; \node[pt,label=right:$b$] (b) at (0.8,1) {};
\node[pt,label=below:$c$] (c) at (0,0) {}; \node[pt,label=above:$d$] (d) at (0,2) {};
\draw[thick,blue!60!black] (a)--(c)--(b)--(d)--(a) (c)--(d);
\node at (0,-0.8) {$K_4-e$};
\end{scope}
\end{tikzpicture}
\caption{Remark~\ref{rem:Sp-not-determine}: three pairwise non-homeomorphic $4$-point spaces (Hasse diagrams; open sets are down-sets), none the opposite of another, with the same specialization graph $K_4-e$. The incomparable pair ($2,3$ in $Y$ and $Y^{op}$, $1,2$ in $D$) becomes the non-edge $ab$.}
\label{fig:Sp-not-determine}
\end{figure}

\subsection{Separation axioms}
Recall that $X$ is $T_1$ if points are closed; $T_2$ (Hausdorff) if distinct points have disjoint neighbourhoods; $T_{2\frac12}$ (Urysohn) if distinct points have neighbourhoods with disjoint closures; and \emph{functionally Hausdorff} (or completely Hausdorff) if for distinct $x,y$ there is a continuous $f\colon X\to[0,1]$ with $f(x)=0$, $f(y)=1$. Each property implies the preceding one and none of the converses holds; see~\cite{Engelking,SteenSeebach}.

\subsection{Complexes, nerves and Dowker duality}
For a finite poset $P$ the \emph{order complex} $\Delta(P)$ is the simplicial complex of non-empty chains. McCord~\cite{McCord} proved that for every finite $T_0$ space $X$ there is a weak homotopy equivalence $|\Delta(X)|\to X$. If $P$ has a maximum or minimum then $\Delta(P)$ is a cone and so is contractible. We shall use the nerve lemma in the following form.

\begin{lemma}[Nerve lemma~\cite{Borsuk,Bjorner}]\label{lem:nerve}
Let $K$ be a finite simplicial complex and $(K_i)_{i\in I}$ a finite family of subcomplexes with $\bigcup_i K_i=K$. Suppose that for every non-empty $J\subseteq I$ the intersection $\bigcap_{i\in J}K_i$ is either empty or contractible. Then $|K|$ is homotopy equivalent to the nerve $\mathcal N=\{J\ne\emptyset:\bigcap_{i\in J}K_i\neq\emptyset\}$.
\end{lemma}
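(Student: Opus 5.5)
The plan is to compare both $|K|$ and $|\mathcal N|$ with a common auxiliary space, the \emph{Mayer--Vietoris blow-up}, and to show that both projections from it are homotopy equivalences. Write $K_J=\bigcap_{i\in J}K_i$ and let $\Delta^I$ be the full simplex on $I$; for $J\in\mathcal N$ let $\sigma_J\subseteq|\Delta^I|$ be the face spanned by $J$. Put
\[
Z=\bigcup_{J\in\mathcal N}|K_J|\times\sigma_J\ \subseteq\ |K|\times|\Delta^I| .
\]
This is a finite CW (indeed polyhedral) subcomplex of the product. It comes with projections $p\colon Z\to|K|$ and $q\colon Z\to|\mathcal N|$; the image of $q$ lies in $|\mathcal N|$ because $\mathcal N$ is closed under passing to non-empty subsets.

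\textbf{The projection $p$.} For a point $x\in|K|$ let $I_x=\{i: x\in|K_i|\}$. This set is non-empty because the $K_i$ cover $K$, and $I_x\in\mathcal N$. The fibre $p^{-1}(x)$ is exactly $\{x\}\times\sigma_{I_x}$, a simplex. More usefully, for each open simplex $\tau$ of $K$ the set $I_\tau=\{i:\tau\in K_i\}$ is constant along $\tau$, so over the closed simplex $\bar\tau$ the space $Z$ contains $\bar\tau\times\sigma_{I_\tau}$. I would then define a section $s\colon|K|\to Z$ simplexwise: choose a point $b_\tau$ in the interior of $\sigma_{I_\tau}$ and extend linearly using barycentric subdivision of $K$. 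On a flag $\tau_0<\dots<\tau_k$ we have $I_{\tau_0}\supseteq\dots\supseteq I_{\tau_k}$, so the linear combination lands in $\sigma_{I_{\tau_0}}$, and $\tau_0$ is the face whose closed simplex contains the whole flag simplex. The straight-line homotopy in the second coordinate, which is fibrewise convex, then shows that $s\circ p\simeq\mathrm{id}_Z$, while $p\circ s=\mathrm{id}$. Hence $p$ is a homotopy equivalence. This step uses only the covering hypothesis.

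\textbf{The projection $q$, which is the main obstacle.} Here the contractibility hypothesis enters: over the closed simplex $\sigma_J$ the preimage $q^{-1}(\sigma_J)$ contains $|K_J|\times\sigma_J$, which is contractible. I would argue by induction over the simplices of $\mathcal N$, adding them in order of \emph{decreasing} dimension, equivalently by building $Z$ and $|\mathcal N|$ as iterated pushouts. Removing a maximal face $J$ of $\mathcal N$ gives pushout squares
\[
Z=Z'\cup_{|K_J|\times\partial\sigma_J}(|K_J|\times\sigma_J),\qquad |\mathcal N|=|\mathcal N'|\cup_{\partial\sigma_J}\sigma_J ,
\]
where the primed spaces are built from the families with $J$ deleted. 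The blow-up construction is indeed compatible with this, since $K_J\subseteq K_{J'}$ for $J'\subseteq J$. The map $q$ restricts on each corner to the projections $|K_J|\times\sigma_J\to\sigma_J$ and $|K_J|\times\partial\sigma_J\to\partial\sigma_J$, which are homotopy equivalences because $|K_J|$ is contractible. On $Z'\to|\mathcal N'|$ it is an equivalence by induction; to keep the induction valid, I would phrase it for arbitrary downward-closed subfamilies of $\mathcal N$ rather than for covers. Both squares are pushouts along cofibrations (CW inclusions), so the gluing lemma for homotopy pushouts shows that $q$ is a homotopy equivalence.

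\textbf{Conclusion.} Combining the two steps gives $|K|\simeq Z\simeq|\mathcal N|$. The delicate points are the bookkeeping in the induction, namely that deleting a maximal face really yields the pushout displayed above, and the appeal to the gluing lemma. If the gluing lemma were unavailable, the fallback would be to prove only a weak equivalence via the Quillen fibre lemma applied to the face posets, and then to use Whitehead's theorem for CW complexes.
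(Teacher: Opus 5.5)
The step for $q$ is sound (read ``adding in decreasing dimension'' as ``removing maximal faces''). By maximality of $J$ and downward closure, $Z'\cap(|K_J|\times\sigma_J)$ is exactly $|K_J|\times\partial\sigma_J$, and the gluing theorem for adjunction spaces applies. The step for $p$, however, fails.

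A point in the interior of the simplex $[\hat\tau_0,\dots,\hat\tau_k]$ of $\mathrm{sd}K$ lies in the relative interior of $\tau_k$. It is $\bar\tau_k$, not $\bar\tau_0$, that contains the flag simplex. So its fibre under $p$ is $\sigma_{I_{\tau_k}}$, the \emph{smallest} of the nested simplices. Your linear combination gives positive weight to $b_{\tau_0}\in\operatorname{int}\sigma_{I_{\tau_0}}$, so it leaves $Z$ as soon as $I_{\tau_0}\neq I_{\tau_k}$.

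No other choice of the $b_\tau$ rescues this. Take the path $v-u-w$ covered by $K_1=[v,u]$ and $K_2=[u,w]$, which satisfies all the hypotheses. The fibre of $p$ is $\{e_1\}$ over $[v,u)$, $\{e_2\}$ over $(u,w]$, and $\sigma_{\{1,2\}}$ over $u$. So $p$ has no continuous section at all. The section-plus-straight-line-homotopy argument is the partition-of-unity proof for \emph{open} covers. For a cover by closed subcomplexes it is unavailable unless you first thicken each $K_i$ to an open neighbourhood that deformation retracts onto it compatibly with all intersections.

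The conclusion that $p$ is a homotopy equivalence is nevertheless true. The cleanest repair is to run your own gluing argument over $K$ instead of over $\mathcal N$, adding the simplices of $K$ one at a time in order of increasing dimension. Let $L$ be a subcomplex and $\tau\notin L$ a simplex with all proper faces in $L$. Since $I_x\supseteq I_\tau$ for $x\in|\partial\tau|$, we have
\[
p^{-1}(|L|\cup\bar\tau)=p^{-1}(|L|)\cup_{|\partial\tau|\times\sigma_{I_\tau}}\bigl(\bar\tau\times\sigma_{I_\tau}\bigr).
\]
The projections $\bar\tau\times\sigma_{I_\tau}\to\bar\tau$ and $|\partial\tau|\times\sigma_{I_\tau}\to|\partial\tau|$ are homotopy equivalences because $\sigma_{I_\tau}$ is a non-empty simplex; this is where the covering hypothesis enters. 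The gluing theorem then applies exactly as in your $q$-step.

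With that replacement your argument is a complete, self-contained proof. For comparison, the paper gives no proof of this lemma at all and simply cites Borsuk and Bj\"orner.
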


For a relation $R\subseteq A\times B$ between finite sets, the two \emph{Dowker complexes} are
\begin{gather*}
K_R=\{\sigma\subseteq A \text{ finite non-empty}: \exists b\ \forall a\in\sigma\ aRb\},\\
L_R=\{\tau\subseteq B\text{ finite non-empty}: \exists a\ \forall b\in\tau\ aRb\}.
\end{gather*}

\begin{lemma}[~\cite{Dowker}]\label{lem:dowker}
$|K_R|\simeq|L_R|$ for every relation $R$ between finite sets.
\end{lemma}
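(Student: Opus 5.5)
The statement is Dowker's theorem: for a relation $R\subseteq A\times B$ between finite sets, $|K_R|\simeq|L_R|$. I will deduce it from the nerve lemma (Lemma~\ref{lem:nerve}) by passing through a common intermediate complex. Without loss of generality I discard every $a\in A$ and every $b\in B$ that is related to nothing; such elements occur in no simplex of $K_R$ or $L_R$, so this changes neither complex.

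The first step covers $K_R$ by full simplices indexed by $B$. For $b\in B$ put $A_b=\{a: aRb\}$, which is non-empty after the reduction, and let $K_b$ be the full simplex on $A_b$, that is, all non-empty subsets of $A_b$. By definition $K_R=\bigcup_b K_b$, since a simplex of $K_R$ is exactly a non-empty subset of some $A_b$. For non-empty $J\subseteq B$ we have
\[
\bigcap_{b\in J}K_b=\text{full simplex on }\bigcap_{b\in J}A_b,
\]
and this is either empty or a simplex, hence contractible. Lemma~\ref{lem:nerve} then gives $|K_R|\simeq|\mathcal N|$, where $\mathcal N=\{J\neq\emptyset: \bigcap_{b\in J}A_b\neq\emptyset\}$.

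It remains to identify $\mathcal N$ with $L_R$. The condition $\bigcap_{b\in J}A_b\neq\emptyset$ holds exactly when some $a$ satisfies $aRb$ for all $b\in J$, which is precisely the condition for $J\in L_R$. Hence $\mathcal N=L_R$ as simplicial complexes, and therefore $|K_R|\simeq|L_R|$.

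There is no real obstacle. The only points needing care are bookkeeping ones: the vertex sets of $K_R$ and $L_R$ should be taken to be the elements that actually occur in simplices, and the covering family must be finite, which holds because $B$ is finite.
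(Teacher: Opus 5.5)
Your proposal is correct and is essentially the paper's proof: you discard unrelated elements, cover $K_R$ by the full simplices on the sets $\{a: aRb\}$, note that their non-empty intersections are full simplices and hence contractible, and identify the nerve of this cover with $L_R$ before applying Lemma~\ref{lem:nerve}. The bookkeeping you flag (vertices that occur in no simplex, finiteness of the cover) is handled the same way there.
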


\begin{proof}
We deduce this from Lemma~\ref{lem:nerve}. We may discard the elements of $A$ and $B$ that are related to nothing, since they are vertices of neither complex. For $b\in B$ let $R^{-1}(b)=\{a: aRb\}\neq\emptyset$ and let $F_b$ be the full simplex on $R^{-1}(b)$. By definition $K_R=\bigcup_bF_b$. For $\emptyset\ne J\subseteq B$ we have $\bigcap_{b\in J}F_b$ equal to the full simplex on $\bigcap_{b\in J}R^{-1}(b)$, which is contractible when non-empty. The nerve of the cover $(F_b)$ consists of the sets $J$ with $\bigcap_{b\in J}R^{-1}(b)\ne\emptyset$, that is, of the sets $J\subseteq B$ for which some $a$ is related to every element of $J$; this is precisely $L_R$. Lemma~\ref{lem:nerve} gives $|K_R|\simeq|L_R|$.
\end{proof}

\section{Five graphs on a topological space and the separation hierarchy}\label{sec:hierarchy}

\begin{definition}\label{def:graphs}
Let $X$ be a topological space. The following five graphs all have vertex set $X$; in each case $x\ne y$.
\begin{enumerate}[label=(\roman*)]
\item The \emph{specialization graph} $\Sp(X)$: $x\sim y$ iff $x\le y$ or $y\le x$, i.e.\ $y\in\cl\{x\}$ or $x\in\cl\{y\}$.
\item The \emph{inseparability graph} $\Ins(X)$: $x\sim y$ iff $V\cap W\ne\emptyset$ for all neighbourhoods $V$ of $x$ and $W$ of $y$.
\item The \emph{closure-meeting graph} $\Cm(X)$: $x\sim y$ iff $\cl\{x\}\cap\cl\{y\}\neq\emptyset$.
\item The \emph{Urysohn graph} $\Ury(X)$: $x\sim y$ iff $\cl V\cap\cl W\ne\emptyset$ for all neighbourhoods $V$ of $x$ and $W$ of $y$.
\item The \emph{functional graph} $\Fun(X)$: $x\sim y$ iff $f(x)=f(y)$ for all continuous $f\colon X\to\R$.
\end{enumerate}
\end{definition}

For a graph type $\Gamma$ we sometimes write $x\sim_\Gamma y$ instead of $x\sim_{\Gamma(X)}y$. Each graph is defined purely in terms of the topology, so every homeomorphism $X\to X$ is an automorphism of each of them: $\Homeo(X)\le\Aut(\Gamma(X))$. As for groups, the automorphism group of the graph is usually much larger. The complement of $\Ins(X)$, in which two points are joined when they have disjoint open neighbourhoods, is the \emph{separation graph} $\operatorname{Sep}(X)$; it appears in Cameron's treatment of topologies invariant under primitive groups~\cite{Cam-top,CamV}, and we return to it in Section~\ref{sec:groups}.

\begin{figure}[ht]
\centering
\begin{tikzpicture}[node distance=1.25cm and 1.6cm, box/.style={draw,rounded corners,inner sep=4pt,minimum width=1.3cm}]
\node[box] (N) {null};
\node[box,above=0.9cm of N] (Sp) {$\Sp$};
\node[box,above left=0.9cm and 0.8cm of Sp] (Ins) {$\Ins$};
\node[box,above right=0.9cm and 0.8cm of Sp] (Cm) {$\Cm$};
\node[box,above right=0.9cm and 0.8cm of Ins] (Ury) {$\Ury$};
\node[box,above=0.9cm of Ury] (Fun) {$\Fun$};
\node[box,above=0.9cm of Fun] (K) {complete};
\draw[-{Stealth}] (N)--(Sp); \draw[-{Stealth}] (Sp)--(Ins); \draw[-{Stealth}] (Sp)--(Cm);
\draw[-{Stealth}] (Ins)--(Ury); \draw[-{Stealth}] (Cm)--(Ury); \draw[-{Stealth}] (Ury)--(Fun); \draw[-{Stealth}] (Fun)--(K);
\node[right=0.35cm of Sp,font=\footnotesize,text width=3cm] {null $\iff T_1$};
\node[left=0.35cm of Ins,font=\footnotesize,align=right] {null $\iff T_2$};
\node[right=0.35cm of Cm,font=\footnotesize] {null $\iff T_1$};
\node[right=0.35cm of Ury,font=\footnotesize] {null $\iff T_{2\frac12}$};
\node[right=0.35cm of Fun,font=\footnotesize] {null $\iff$ functionally Hausdorff};
\draw[dashed,gray] (Ins) to[bend left=10] node[midway,above,font=\scriptsize,black] {$\Ins\sq$} (Ury);
\end{tikzpicture}
\caption{The separation hierarchy of Theorem~\ref{thm:hierarchy}. Arrows denote inclusion of edge sets. $\Ins$ and $\Cm$ are incomparable in general, $\Ins\sq\subseteq \Ury$ always, with equality for Alexandrov spaces.}
\label{fig:diamond}
\end{figure}

\vspace{1cm}
\begin{theorem}\label{thm:hierarchy}
Let $X$ be any topological space.
\begin{enumerate}[label=\textup{(\alph*)}]
\item $E(\Sp(X))\subseteq E(\Ins(X))\cap E(\Cm(X))$ and $E(\Ins(X))\cup E(\Cm(X))\subseteq E(\Ury(X))\subseteq E(\Fun(X))$. In general $E(\Ins(X))$ and $E(\Cm(X))$ are incomparable.
\item $\Sp(X)$ is null $\iff$ $\Cm(X)$ is null $\iff$ $X$ is $T_1$; $\Ins(X)$ is null $\iff$ $X$ is Hausdorff; $\Ury(X)$ is null $\iff$ $X$ is Urysohn; $\Fun(X)$ is null $\iff$ $X$ is functionally Hausdorff.
\item $\Fun(X)$ is a disjoint union of complete graphs; its classes are closed subsets of $X$. In particular $E(\Fun(X))$ contains the transitive closure of each of $E(\Sp(X))$, $E(\Ins(X))$, $E(\Cm(X))$, $E(\Ury(X))$.
\item $E(\Ins(X)\sq)\subseteq E(\Ury(X))$: if $x$ and $y$ are at distance at most $2$ in $\Ins(X)$ then they are adjacent in $\Ury(X)$.
\item \textup{(Kolmogorov reduction)} Let $q\colon X\to KX$ be the Kolmogorov quotient and $\Gamma$ one of the five graph types. For $x\ne y$ in $X$, $x\sim_{\Gamma(X)}y$ iff $q(x)\eqadj{\Gamma(KX)}q(y)$. Hence $\Gamma(X)$ is obtained from $\Gamma(KX)$ by replacing each vertex $c$ by a clique on $q^{-1}(c)$ whose vertices are closed twins.
\item \textup{(Alexandrov spaces)} If $X$ is Alexandrov, then $x\sim_{\Ins}y$ iff $U_x\cap U_y\neq\emptyset$; $x\sim_{\Cm}y$ iff ${\uparrow}x\cap{\uparrow}y\neq\emptyset$; $\Ury(X)=\Ins(X)\sq$; and the classes of $\Fun(X)$ are the connected components of $X$, which coincide with the connected components of $\Sp(X)$.
\end{enumerate}
\end{theorem}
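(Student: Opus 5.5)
I would prove the six parts essentially in the order stated, since each is a direct unwinding of Definition~\ref{def:graphs}. The single tool used throughout is that, by \eqref{eq:spec}, $y\in\cl\{x\}$ means exactly that every neighbourhood of $y$ contains $x$.

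\textbf{(a) and (b).} For $\Sp\subseteq\Ins\cap\Cm$: if $y\in\cl\{x\}$, then every neighbourhood of $y$ contains $x$, so it meets every neighbourhood of $x$; also $y\in\cl\{x\}\cap\cl\{y\}$. For $\Ins\subseteq\Ury$, use $V\subseteq\cl V$. For $\Cm\subseteq\Ury$: if $z\in\cl\{x\}\cap\cl\{y\}$ and $V\ni x$, then $z\in\cl V$, and likewise $z\in\cl W$. For $\Ury\subseteq\Fun$, argue by contraposition: if $f(x)\neq f(y)$, the preimages of disjoint closed intervals around $f(x)$ and $f(y)$ give neighbourhoods with disjoint closures. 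For incomparability I would take two examples.
\begin{itemize}
\item The line with two origins: the two origins are $\Ins$-adjacent, but the space is $T_1$, so $\Cm$ is null.
\item The three-point space $V$ (one open point below two closed points): the two closed points have meeting minimal open sets $U_x\cap U_y\ni$ (open point), so they are $\Ins$-adjacent. The opposite space $V^{op}$ has two open points whose closures meet, so they are $\Cm$-adjacent; but their minimal open sets $\{a\},\{b\}$ are disjoint, so they are not $\Ins$-adjacent.
\end{itemize}
Part (b) is a restatement of the definitions. The one point needing care is that $\Cm$ null forces $T_1$: if $y\in\cl\{x\}$ with $y\neq x$, then $x\sim_\Cm y$.

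\textbf{(c) and (d).} $\Fun$ is the kernel relation of the family of continuous real maps, so it is an equivalence relation. Its classes $\bigcap_f f^{-1}(f(x))$ are intersections of closed sets, hence closed. The transitive-closure claim then follows from (a). For (d), suppose $x\sim_\Ins z\sim_\Ins y$. Given neighbourhoods $V\ni x$ and $W\ni y$, I claim $z\in\cl V\cap\cl W$: every neighbourhood of $z$ meets $V$, so $z\in\cl V$, and similarly $z\in\cl W$. The case $x=y$ is excluded, and distance $1$ is covered by (a).

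\textbf{(e) Kolmogorov reduction.} Since open sets, closed sets and closures are $q$-saturated and $q$ is open, neighbourhoods of $x$ correspond to neighbourhoods of $q(x)$, with $q^{-1}\cl=\cl q^{-1}$. Every continuous $f$ factors through $KX$, because $f$ is constant on indistinguishable points (points of $\R$ are closed). Each defining condition therefore transfers verbatim. When $q(x)=q(y)$, the points $x$ and $y$ are adjacent in every graph, since $y\in\cl\{x\}$. Closed twins follow because adjacency depends only on $q$-images.

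\textbf{(f) Alexandrov spaces.} Here one may use the minimal neighbourhoods $U_x$, which gives the $\Ins$ criterion. The $\Cm$ criterion is \eqref{eq:closure}. For $\Ury=\Ins\sq$, one inclusion is (d). For the other, take $V=U_x$ and $W=U_y$, so $\cl U_x\cap\cl U_y\neq\emptyset$ is a point $z$ lying in ${\uparrow}U_x\cap{\uparrow}U_y$. Then there are $a\le x$ and $b\le y$ with $a\le z$ and $b\le z$. Thus $a,b\in U_z$, while $a\in U_x$ and $b\in U_y$. This gives a path $x,z,y$ in $\Ins$: $U_x\cap U_z\ni a$ and $U_z\cap U_y\ni b$ (with degenerate cases when some of these points coincide). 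This step is the main obstacle, since one must notice that the middle vertex $z$ itself works.

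For $\Fun$: connected components of an Alexandrov space coincide with the components of the comparability graph $\Sp$. Such a component is a union of up-sets and down-sets, hence clopen, and a comparable pair cannot be split by a clopen set. Continuous $f$ is order-preserving into $\R$, which is $T_1$, so $f$ is constant on comparable pairs and hence on components. Conversely, the indicator function of a clopen component is continuous, so points in different components are separated. Hence the $\Fun$ classes are exactly the components.
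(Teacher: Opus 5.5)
Your proposal is correct and follows the paper's proof essentially step by step, including the zig-zag $x\ge a\le z\ge b\le y$ that gives $\Ury(X)\subseteq\Ins(X)\sq$ for Alexandrov spaces and the clopen-component argument for $\Fun$. The only difference is in the incomparability examples: the paper uses the single pseudo-circle $S$, where $ab\in E(\Cm)\setminus E(\Ins)$ and $cd\in E(\Ins)\setminus E(\Cm)$, whereas each of your two examples (the line with two origins and $V^{op}$) witnesses only one non-inclusion, and their topological sum gives one space witnessing both.
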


\begin{proof}
Throughout, ``neighbourhood'' may be replaced by ``open neighbourhood'' in Definition~\ref{def:graphs}(ii) and (iv), since every neighbourhood contains an open one and shrinking neighbourhoods (and hence their closures) only makes the non-emptiness conditions harder to satisfy.

\emph{(a)} Let $x\sim_{\Sp}y$, say $x\le y$, i.e.\ every open set containing $y$ contains $x$. If $V$ and $W$ are open neighbourhoods of $x$ and $y$, then $x\in W$, so $x\in V\cap W\neq\emptyset$; hence $x\sim_{\Ins}y$. Also $y\in\cl\{x\}$ and $y\in\cl\{y\}$, so $\cl\{x\}\cap\cl\{y\}\ne\emptyset$ and $x\sim_{\Cm}y$. This proves $E(\Sp)\subseteq E(\Ins)\cap E(\Cm)$.

If $x\sim_{\Ins}y$ and $V,W$ are neighbourhoods of $x,y$, then $\emptyset\ne V\cap W\subseteq\cl V\cap\cl W$, so $x\sim_{\Ury}y$. If $x\sim_{\Cm}y$, choose $z\in\cl\{x\}\cap\cl\{y\}$; for neighbourhoods $V\ni x$ and $W\ni y$ we have $\cl\{x\}\subseteq \cl V$ and $\cl\{y\}\subseteq\cl W$, so $z\in\cl V\cap\cl W$ and $x\sim_{\Ury}y$.

Now let $x\sim_{\Ury}y$ and suppose, for a contradiction, that some continuous $f\colon X\to\R$ has $f(x)\neq f(y)$. Composing with an affine map of $\R$ and then with the retraction $t\mapsto\min(1,\max(0,t))$ of $\R$ onto $[0,1]$, we may assume $f(x)=0$ and $f(y)=1$. Put $V=f^{-1}([0,\frac13))$ and $W=f^{-1}((\frac23,1])$, open neighbourhoods of $x$ and $y$. By continuity $\cl V\subseteq f^{-1}([0,\frac13])$ and $\cl W\subseteq f^{-1}([\frac23,1])$, so $\cl V\cap \cl W=\emptyset$, a contradiction. Hence $x\sim_{\Fun}y$.

For incomparability, let $S=\{a,b,c,d\}$ be the four-point space whose open sets are generated by $\{a\},\{b\},\{a,b,c\},\{a,b,d\}$ (the \emph{pseudo-circle}~\cite{McCord}; see Figure~\ref{fig:pseudocircle}). Here $a,b<c,d$. The points $a$ and $b$ are separated by the open sets $\{a\},\{b\}$, but $c\in\cl\{a\}\cap\cl\{b\}$, so $ab\in E(\Cm)\setminus E(\Ins)$. The points $c,d$ are closed and distinct, so $cd\notin E(\Cm)$, but every neighbourhood of either contains $\{a,b\}$, so $cd\in E(\Ins)\setminus E(\Cm)$.

\emph{(b)} By~\eqref{eq:spec}, $\Sp(X)$ is null iff the specialization preorder is equality iff $X$ is $T_1$. If $X$ is $T_1$ then $\cl\{x\}=\{x\}$ for every $x$, so $\Cm(X)$ is null; conversely if $\Cm(X)$ is null then by (a) so is $\Sp(X)$, and $X$ is $T_1$. The statements for $\Ins,\Ury,\Fun$ are just the definitions of the separation axioms, rewritten as the absence of edges (for $\Fun$ we use the reduction to $f(x)=0,f(y)=1$ made in the proof of (a)).

\emph{(c)} The relation $x\equiv y\iff f(x)=f(y)$ for all $f\in C(X,\R)$ is an equivalence relation, and $\Fun(X)$ is the graph of this relation with loops removed, hence a disjoint union of cliques on its classes. The class of $x$ is $\bigcap_{f}f^{-1}(f(x))$, an intersection of closed sets. Since $\equiv$ is transitive and contains all edges of $\Ury(X)$ by (a), and hence those of $\Sp,\Ins,\Cm$, it contains their transitive closures.

\emph{(d)} Let $x\sim_{\Ins}z\sim_{\Ins}y$ with $x\ne y$ (the case $x\sim_{\Ins}y$ is covered by (a)). Let $V,W$ be open neighbourhoods of $x$ and $y$. Every neighbourhood $O$ of $z$ meets $V$ (as $x\sim_{\Ins}z$), so $z\in\cl V$; similarly $z\in\cl W$. Thus $\cl V\cap\cl W\ni z$ and $x\sim_{\Ury}y$.

\emph{(e)} Recall that the open sets of $X$ are exactly the sets $q^{-1}(O)$ with $O$ open in $KX$, and likewise for closed sets. Let $x\neq y$.

If $q(x)=q(y)$, then $x$ and $y$ have the same open neighbourhoods; so $x\le y\le x$ and $x\sim_{\Sp}y$, whence $x$ and $y$ are adjacent in all five graphs by (a). Moreover, for every graph type the conditions defining adjacency of $x$ with a third vertex $z$ only involve the open neighbourhoods of $x$ (for $\Sp$, $\Ins$, $\Ury$), the set $\cl\{x\}$ (for $\Cm$; note $\cl\{x\}=\cl\{y\}$ because $x\le y\le x$), or the values $f(x)$ with $f$ continuous (for $\Fun$; note $x\sim_{\Fun}y$). Hence $x,y$ are closed twins.

Now suppose $q(x)\ne q(y)$. For $\Sp$: $y\in\cl\{x\}$ iff every open set containing $y$ contains $x$ iff every open subset of $KX$ containing $q(y)$ contains $q(x)$ (opens of $X$ are saturated). For $\Ins$: open neighbourhoods of $x$ are the $q^{-1}(O)$ with $q(x)\in O$ and $q^{-1}(O)\cap q^{-1}(O')=q^{-1}(O\cap O')$, which is empty iff $O\cap O'=\emptyset$ because $q$ is surjective. For $\Cm$: $\cl\{x\}=q^{-1}(\cl\{q(x)\})$ since closed sets are saturated, and preimages under a surjection intersect iff the sets intersect. For $\Ury$: $\cl q^{-1}(O)=q^{-1}(\cl O)$, and we conclude as for $\Ins$. For $\Fun$: every continuous $f\colon X\to\R$ is constant on the fibres of $q$ (if $x\le x'\le x$ then $f(x)\le f(x')\le f(x)$ in the specialization preorder of $\R$, which is equality since $\R$ is $T_1$), so $f=\bar f\circ q$ with $\bar f$ continuous because $q$ is a quotient map; conversely $\bar f\circ q$ is continuous for every continuous $\bar f$. Thus in all five cases adjacency of $x,y$ in $\Gamma(X)$ is equivalent to adjacency of $q(x),q(y)$ in $\Gamma(KX)$.

\emph{(f)} Let $X$ be Alexandrov. Since $U_x$ is the smallest neighbourhood of $x$, $x\sim_{\Ins}y$ iff $U_x\cap U_y\ne\emptyset$, i.e.\ iff $x$ and $y$ have a common lower bound. By~\eqref{eq:closure}, $x\sim_{\Cm}y$ iff ${\uparrow}x\cap{\uparrow}y\ne\emptyset$, i.e.\ iff they have a common upper bound.

For the Urysohn graph, again the smallest neighbourhoods give the strongest condition, so $x\sim_{\Ury}y$ iff $\cl U_x\cap\cl U_y\ne\emptyset$. By~\eqref{eq:closure}, $\cl U_x={\uparrow}{\downarrow}x$. Hence $x\sim_{\Ury}y$ iff there are $a\le x$, $b\le y$ and $w$ with $a\le w\ge b$. Given such a zig-zag $x\ge a\le w\ge b\le y$, the point $a$ lies in $U_x\cap U_w$ and $b\in U_w\cap U_y$, so either $x=w$ or $x\sim_{\Ins}w$, and either $w=y$ or $w\sim_{\Ins}y$; hence $d_{\Ins}(x,y)\le 2$. This shows $E(\Ury(X))\subseteq E(\Ins(X)\sq)$, and the reverse inclusion is (d).

Finally, let $\approx$ be the equivalence relation generated by $\le$ (equivalently, $x\approx y$ iff $x,y$ lie in the same connected component of $\Sp(X)$). Let $C$ be an $\approx$-class. If $x\in C$ then $U_x={\downarrow}x\subseteq C$ and $\cl\{x\}={\uparrow}x\subseteq C$; hence $C$ is a union of open sets and a union of closed sets, and since unions of closed sets are closed in an Alexandrov space, $C$ is clopen. $C$ is connected: if $A\subseteq C$ is non-empty and clopen in $C$ (hence in $X$) and $x\in A$, then $A\supseteq U_x\cup\cl\{x\}$, so $A$ is closed under taking comparable elements, whence $A=C$. Therefore the $\approx$-classes are exactly the connected components of $X$. The characteristic function of a component $C$ is continuous (as $C$ is clopen), so points in different components are not $\Fun$-adjacent. Conversely, if $f\colon X\to\R$ is continuous and $x\le y$, then $y\in\cl\{x\}$ gives $f(y)\in\cl\{f(x)\}=\{f(x)\}$; so $f$ is constant on $\approx$-classes and any two points of one component are $\Fun$-adjacent.
\end{proof}

\begin{figure}[ht]
\centering
\begin{tikzpicture}[scale=0.85,every node/.style={font=\small},pt/.style={circle,fill,inner sep=1.4pt}]
\begin{scope}
\node[pt,label=below:$a$] (a) at (0,0) {}; \node[pt,label=below:$b$] (b) at (1.4,0) {};
\node[pt,label=above:$c$] (c) at (0,1.4) {}; \node[pt,label=above:$d$] (d) at (1.4,1.4) {};
\draw (a)--(c) (a)--(d) (b)--(c) (b)--(d);
\node at (0.7,-1) {$S$ (Hasse)};
\end{scope}
\foreach \name/\x/\edges in {{$\Sp(S)$}/3.3/{a/c,a/d,b/c,b/d},{$\Ins(S)$}/6.0/{a/c,a/d,b/c,b/d,c/d},{$\Cm(S)$}/8.7/{a/c,a/d,b/c,b/d,a/b},{$\Ury(S)=\Fun(S)$}/11.4/{a/c,a/d,b/c,b/d,a/b,c/d}}{
\begin{scope}[xshift=\x cm]
\node[pt,label=below:$a$] (a) at (0,0) {}; \node[pt,label=below:$b$] (b) at (1.4,0) {};
\node[pt,label=above:$c$] (c) at (0,1.4) {}; \node[pt,label=above:$d$] (d) at (1.4,1.4) {};
\foreach \u/\v in \edges {\draw[thick,blue!60!black] (\u)--(\v);}
\node at (0.7,-1) {\name};
\end{scope}}
\end{tikzpicture}
\caption{The pseudo-circle $S$ (weakly homotopy equivalent to $S^1$) and its graphs. $\Ins(S)$ and $\Cm(S)$ are both $K_4$ minus an edge, but different edges are missing.}
\label{fig:pseudocircle}
\end{figure}

\begin{example}[A Hausdorff space with $\Ury\neq\Ins\sq$]\label{ex:HnotU}
Let
\[
X=\{a,b\}\cup\{c_n:n\ge1\}\cup\{p_{n,k},\,q_{n,k}:n,k\ge1\}.
\]
Declare every $p_{n,k}$ and $q_{n,k}$ isolated, and take as basic neighbourhoods
\begin{align*}
B_K(c_n)&=\{c_n\}\cup\{p_{n,k},q_{n,k}:k\ge K\},\\
B_N(a)&=\{a\}\cup\{p_{n,k}:n\ge N,\ k\ge1\},\qquad B_N(b)=\{b\}\cup\{q_{n,k}:n\ge N,\ k\ge1\}.
\end{align*}
(See Figure~\ref{fig:HnotU}.) It is routine that these sets form a neighbourhood base of a topology. The space is Hausdorff: $B_1(a)\cap B_1(b)=\emptyset$; $B_{n+1}(a)\cap B_1(c_n)=\emptyset$ and likewise for $b$; and $B_1(c_n)\cap B_1(c_m)=\emptyset$ for $n\ne m$. Thus $\Ins(X)$ is null and so is $\Ins(X)\sq$. On the other hand every neighbourhood of $c_n$ meets $B_N(a)$ when $n\ge N$, so
\[
\cl B_N(a)=B_N(a)\cup\{c_n:n\ge N\},\qquad \cl B_N(b)=B_N(b)\cup\{c_n:n\ge N\},
\]
and $\cl B_N(a)\cap\cl B_{N'}(b)\ni c_{\max(N,N')}$. Hence $a\sim_{\Ury}b$, and $\Ury(X)$ has exactly one edge, $ab$. So the inclusion of Theorem~\ref{thm:hierarchy}(d) can be strict, and Theorem~\ref{thm:hierarchy}(f) fails without the Alexandrov hypothesis. Similarly, since there are Urysohn spaces which are not functionally Hausdorff~\cite{SteenSeebach}, $\Fun(X)$ may be non-null while $\Ury(X)$ is null; so the transitive closure in Theorem~\ref{thm:hierarchy}(c) can also be strictly contained in $E(\Fun(X))$.
\end{example}

\begin{figure}[ht]
\centering
\begin{tikzpicture}[scale=0.8,pt/.style={circle,fill,inner sep=1.2pt},sp/.style={circle,fill=gray,inner sep=0.8pt}]
\foreach \n in {1,2,3,4,5}{
  \node[pt,label={[font=\scriptsize]left:$c_{\n}$}] (c\n) at (1.8*\n,0) {};
  \foreach \k in {1,2,3,4,5}{
    \node[sp] at (1.8*\n,{2.6/\k}) {};
    \node[sp] at (1.8*\n,{-2.6/\k}) {};
  }
}
\node[font=\scriptsize] at (1.8,2.95) {$p_{1,k}$};
\node[font=\scriptsize] at (1.8,-2.95) {$q_{1,k}$};
\node at (11.2,0) {$\cdots$};
\node[pt,label=right:$a$] (a) at (12.4,1.7) {};
\node[pt,label=right:$b$] (b) at (12.4,-1.7) {};
\draw[dashed,red!70!black,rounded corners] (6.9,0.25) rectangle (11.6,2.8);
\node[red!70!black,font=\scriptsize] at (9.25,3.05) {$B_4(a)$: the $p$-columns $n\ge 4$};
\draw[dashed,blue!70!black,rounded corners] (6.9,-0.25) rectangle (11.6,-2.8);
\node[blue!70!black,font=\scriptsize] at (9.25,-3.05) {$B_4(b)$};
\end{tikzpicture}
\caption{The space of Example~\ref{ex:HnotU}. Column $n$ accumulates at $c_n$ from above and below; the upper halves of the columns accumulate at $a$, the lower halves at $b$. The closures of $B_N(a)$ and $B_N(b)$ both contain the tail $\{c_n:n\ge N\}$.}
\label{fig:HnotU}
\end{figure}

\section{Products, sums and joins}\label{sec:products}

The next result is the topological analogue of~\cite[Proposition~6.1]{Cam-groups}, where the commuting graph of a direct product of groups is the strong product of the commuting graphs, while for the power graph the analogous statement fails.

\begin{theorem}\label{thm:products}
Let $X$ and $Y$ be topological spaces, and give $X\times Y$ the product topology.
\begin{enumerate}[label=\textup{(\alph*)}]
\item For $\Gamma\in\{\Ins,\Cm,\Ury,\Fun\}$ we have $\Gamma(X\times Y)=\Gamma(X)\sm\Gamma(Y)$.
\item $E(\Sp(X\times Y))\subseteq E(\Sp(X)\sm\Sp(Y))$, with equality if and only if $X$ or $Y$ is $T_1$.
\item For the topological sum, $\Gamma(X\sqcup Y)=\Gamma(X)\sqcup\Gamma(Y)$ for all five graph types.
\item Let $X,Y\ne\emptyset$ and let $X\join Y$ be the non-Hausdorff join, the set $X\sqcup Y$ whose open sets are the open sets of $X$ together with the sets $X\cup V$ for $V$ open in $Y$. Then
\[
\Sp(X\join Y)=\Sp(X)\vee\Sp(Y),\quad \Ins(X\join Y)=\Ins(X)\vee K_Y,\quad \Cm(X\join Y)=K_X\vee\Cm(Y),
\]
and $\Ury(X\join Y)=\Fun(X\join Y)$ is complete.
\end{enumerate}
\end{theorem}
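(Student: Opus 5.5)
Throughout I would work with basic open boxes $V\times W$, since these form a base at each point of $X\times Y$, and with the identities $\cl(V\times W)=\cl V\times\cl W$ and $\cl\{(x,y)\}=\cl\{x\}\times\cl\{y\}$. For part (a) I take the graphs one at a time, and in each case I must show that, for distinct points $(x,y)$ and $(x',y')$, adjacency is equivalent to $x\eqadj{\Gamma(X)}x'$ together with $y\eqadj{\Gamma(Y)}y'$.

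\emph{The graphs $\Ins$ and $\Ury$.} Only basic boxes need be tested, as in the opening remark of the proof of Theorem~\ref{thm:hierarchy}. We have $(V\times W)\cap(V'\times W')=(V\cap V')\times(W\cap W')$, and a product is non-empty iff both factors are. Note also that the relation $\eqadj{\Ins}$ holds exactly when every pair of neighbourhoods meets, which includes the case $x=x'$. Together these give the equivalence for $\Ins$. The same argument applies to $\Ury$ after replacing each set by its closure and using the closure identity for boxes.

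\emph{The graph $\Cm$.} This is immediate from $\cl\{(x,y)\}=\cl\{x\}\times\cl\{y\}$.

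\emph{The graph $\Fun$.} This is the one real step.
\begin{itemize}
\item If $(x,y)\sim_{\Fun}(x',y')$, compose with the continuous maps $x\mapsto(x,y)$ and the projections. This gives $f(x)=f(x')$ for every continuous $f$ on $X$, by pulling back along the projection. Hence $x\eqadj{\Fun}x'$, and symmetrically $y\eqadj{\Fun}y'$.
\item Conversely, suppose $x\eqadj{\Fun}x'$, $y\eqadj{\Fun}y'$, and $F$ is continuous on $X\times Y$. The map $t\mapsto F(t,y)$ is continuous on $X$, so $F(x,y)=F(x',y)$. Similarly $s\mapsto F(x',s)$ gives $F(x',y)=F(x',y')$.
\end{itemize}
So $\Fun$ also works; this slice argument is where the transitivity of $\Fun$ is essential.

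\emph{Part (b).} By the closure identity for points, $(x,y)\le(x',y')$ iff $x\le x'$ and $y\le y'$. Every comparable pair in the product is therefore comparable in each coordinate, which gives the inclusion.
\begin{itemize}
\item If, say, $X$ is $T_1$, then an edge of the strong product has either $x=x'$, or $x\neq x'$ with $x\sim x'$. The latter is impossible because $\Sp(X)$ is null, so $x=x'$ and $y\sim_{\Sp}y'$. The two points are then comparable in the product.
\item If neither space is $T_1$, choose $x<x'$ in $X$ and $y<y'$ in $Y$, possibly only in the preorder with $x\ne x'$. Then $(x,y')$ and $(x',y)$ are adjacent in the strong product. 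They are not comparable in the product unless $x'\le x$ and $y\le y'$ or the reverse holds.
\end{itemize}
The main subtlety in (b) is the non-$T_0$ case. I would pick the pairs with strict order, $x\le x'$ but $x'\not\le x$. Such a pair exists whenever the preorder is not equality: if every $\le$ were symmetric, I would use $x\le x'\le x$, which makes the points comparable anyway. In that case I must pick differently, so I would handle it by choosing a non-symmetric pair when one exists. If $X$ is not $T_1$ but its preorder is an equivalence relation, the argument has to be redone. I would expect this case to require care, and possibly a restriction of the claim to the relevant case analysis.

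\emph{Part (c).} Open sets, closures and continuous functions on a sum split componentwise. Points in different summands have disjoint clopen neighbourhoods and are separated by an indicator function, so no graph has an edge between the summands.

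\emph{Part (d).} In the join, every open set containing a point of $Y$ contains $X$. Hence every $x\in X$ lies below every $y\in Y$, which gives $\Sp=\Sp(X)\vee\Sp(Y)$. Next I compare neighbourhoods and closures in the two pieces.
\begin{itemize}
\item Any two points of $Y$ have neighbourhoods meeting in $X\ne\emptyset$, which gives $K_Y$. Points of $X$ have the same neighbourhoods as before, giving $\Ins(X)$.
\item Closures of points of $X$ contain all of $Y$, giving $K_X$. Closures of points of $Y$ stay inside $Y$ and agree with those computed in $Y$, giving $\Cm(Y)$.
\end{itemize}
Finally, $\Cm$ contains $K_X$ and the cross edges, so by (a) of Theorem~\ref{thm:hierarchy} the graph $\Ury$ contains a connected spanning subgraph. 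Points of $Y$ have neighbourhood closures containing $Y$, and points of $X$ have closures containing $Y$ as well, so every pair meets and $\Ury$ is complete; hence $\Fun$ is complete too.
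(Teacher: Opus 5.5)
Your proofs of (a), (c), (d), and of the inclusion and the ``if'' half of (b), follow the paper's proof essentially step by step. You use basic boxes with $\cl(A\times B)=\cl A\times\cl B$ for $\Ins$, $\Ury$ and $\Cm$, the slice argument for $\Fun$, and the product preorder for $\Sp$. Your direct check that $\Ury(X\join Y)$ is complete (every closure of a neighbourhood of any point contains $Y$) is a harmless variant of the paper's remark that $E(\Ins)\cup E(\Cm)$ is already complete. In (d) you should still record two things. First, the cross edges between $X$ and $Y$ lie in $\Ins$ and $\Cm$; this follows from $x\le y$ and $E(\Sp)\subseteq E(\Ins)\cap E(\Cm)$. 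Second, the specialization preorder restricted to $X$ and to $Y$ is the original one.

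The ``only if'' half of (b) is where your proposal stalls, and more care cannot rescue the case you flag, because the statement is false there. Your assertion that a strict pair exists whenever the preorder is not equality is wrong, as you then half-notice. Let $X=Y$ be the two-point indiscrete space. Its specialization preorder is universal, so it is not $T_1$ and has no pair with $x\le x'$, $x'\not\le x$. The product $X\times Y$ is the four-point indiscrete space, so
\[
\Sp(X\times Y)=K_4=K_2\sm K_2=\Sp(X)\sm\Sp(Y),
\]
although neither factor is $T_1$. Keeping $X$ indiscrete and taking $Y$ to be the Sierpi\'nski space also works, so one $T_0$ factor does not help. The paper's proof contains the same unjustified step: ``if neither is $T_1$ choose $x<x'$ \dots{} (strict \dots)''.

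What your argument does prove is a corrected statement: equality holds if and only if the specialization preorder of $X$ or of $Y$ is symmetric, equivalently $KX$ or $KY$ is $T_1$. For $T_0$ spaces this is exactly the stated condition. Suppose the preorder of $X$ is symmetric and $(x,y)\sim(x',y')$ in $\Sp(X)\sm\Sp(Y)$. Then $x\le x'\le x$ and, say, $y\le y'$, so $(x,y)\le(x',y')$. If instead both preorders have strict pairs $x<x'$ and $y<y'$, then $(x,y')$ and $(x',y)$ are adjacent in the strong product but incomparable in $X\times Y$. So rather than ``redoing'' the symmetric case, replace ``$T_1$'' by this weaker hypothesis.
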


\begin{proof}
Write $\pi_X,\pi_Y$ for the projections. Recall that the sets $A\times B$ with $A,B$ open form a base, that $\cl(A\times B)=\cl A\times\cl B$, and in particular $\cl\{(x,y)\}=\cl\{x\}\times\cl\{y\}$.

\emph{(a), $\Ins$.} We show that $(x,y)$ and $(x',y')$ can be separated by disjoint open sets iff $x,x'$ can be (in $X$) or $y,y'$ can be (in $Y$); note that equal points cannot be separated, which is why the strong product, with its ``equal or adjacent'' clause, appears. If $A\ni x$, $A'\ni x'$ are disjoint and open, then $A\times Y$ and $A'\times Y$ separate the two pairs; similarly in the second coordinate. Conversely, suppose $O\ni(x,y)$ and $O'\ni(x',y')$ are disjoint and open. Choose basic sets $A\times B\subseteq O$ and $A'\times B'\subseteq O'$ containing the respective points. Then $(A\cap A')\times(B\cap B')=(A\times B)\cap(A'\times B')=\emptyset$, so $A\cap A'=\emptyset$ or $B\cap B'=\emptyset$, and $x,x'$ or $y,y'$ are separated. Taking negations, $(x,y)\eqadj{\Ins}(x',y')$ iff $x\eqadj{\Ins}x'$ and $y\eqadj{\Ins}y'$, which is the definition of $\Ins(X)\sm\Ins(Y)$.

\emph{(a), $\Ury$.} The argument is identical once we note that, if $O\supseteq A\times B$ and $O'\supseteq A'\times B'$ have disjoint closures, then $\cl(A\times B)\cap\cl(A'\times B')=(\cl A\cap\cl A')\times(\cl B\cap\cl B')=\emptyset$, so $\cl A\cap\cl A'=\emptyset$ or $\cl B\cap\cl B'=\emptyset$; and conversely $\cl(A\times Y)\cap\cl(A'\times Y)=(\cl A\cap\cl A')\times Y$.

\emph{(a), $\Cm$.} $\cl\{(x,y)\}\cap\cl\{(x',y')\}=(\cl\{x\}\cap\cl\{x'\})\times(\cl\{y\}\cap\cl\{y'\})$, which is non-empty iff both factors are; and $\cl\{x\}\cap\cl\{x'\}\ne\emptyset$ iff $x\eqadj{\Cm}x'$.

\emph{(a), $\Fun$.} If $f(x)\ne f(x')$ for some continuous $f$ on $X$, then $f\circ\pi_X$ separates $(x,y)$ from $(x',y')$; similarly for $Y$. Conversely, suppose that $x\eqadj{\Fun}x'$ and $y\eqadj{\Fun}y'$ and let $g\colon X\times Y\to\R$ be continuous. The map $g(\,\cdot\,,y)\colon X\to\R$ is continuous, so $g(x,y)=g(x',y)$; the map $g(x',\cdot\,)\colon Y\to\R$ is continuous, so $g(x',y)=g(x',y')$. Hence $g(x,y)=g(x',y')$.

\emph{(b)} Since $\cl\{(x,y)\}=\cl\{x\}\times\cl\{y\}$, the specialization preorder of $X\times Y$ is the product preorder: $(x,y)\le(x',y')$ iff $x\le x'$ and $y\le y'$. If $(x,y)$ and $(x',y')$ are comparable, say $(x,y)\le(x',y')$, then $x,x'$ are equal or comparable and so are $y,y'$; this gives the inclusion. If $X$ is $T_1$ and $(x,y)\sim(x',y')$ in $\Sp(X)\sm\Sp(Y)$, then $x=x'$ (as $\Sp(X)$ is null) and $y\sim_{\Sp}y'$, so the pairs are comparable; similarly if $Y$ is $T_1$. Conversely, if neither is $T_1$ choose $x<x'$ in $X$ and $y<y'$ in $Y$ (strict meaning $x\le x'$, $x'\not\le x$). Then $(x,y')$ and $(x',y)$ are adjacent in $\Sp(X)\sm\Sp(Y)$, but $(x,y')\le(x',y)$ would force $y'\le y$ and $(x',y)\le(x,y')$ would force $x'\le x$; so they are not adjacent in $\Sp(X\times Y)$.

\emph{(c)} In $X\sqcup Y$ both summands are clopen; the neighbourhoods, closures and point closures of points of $X$ are computed in $X$, points of $X$ are separated from points of $Y$ by the open sets $X$ and $Y$ (whose closures are $X$ and $Y$), and by the characteristic function of $X$. Continuous real functions on $X\sqcup Y$ are pairs of continuous functions.

\emph{(d)} Every open set of $X\join Y$ containing a point $y\in Y$ contains $X$; hence $x\le y$ for all $x\in X$, $y\in Y$, and the specialization preorder restricted to $X$ (respectively $Y$) is that of $X$ (respectively $Y$), because $X$ is an open subspace and the traces on $Y$ of the open sets containing points of $Y$ are exactly the open sets of $Y$. This gives the formula for $\Sp$. Every pair of neighbourhoods of two points of $Y$ contains $X\neq\emptyset$, so $Y$ is a clique in $\Ins$; each $x\in X$ is $\Sp$-adjacent and hence $\Ins$-adjacent to each $y\in Y$; and $x,x'\in X$ are separated in $X\join Y$ iff they are separated in the open subspace $X$. This gives $\Ins(X\join Y)=\Ins(X)\vee K_Y$. The closed sets of $X\join Y$ are the sets $(X\setminus U)\cup Y$ ($U$ open in $X$) and $Y\setminus V$ ($V$ open in $Y$). Hence $\cl\{y\}=\cl_Y\{y\}$ for $y\in Y$ and $\cl\{x\}=\cl_X\{x\}\cup Y$ for $x\in X$. So any two points of $X$ are $\Cm$-adjacent (their closures contain $Y\ne\emptyset$), each $x\in X$ is $\Cm$-adjacent to each $y\in Y$, and $y,y'\in Y$ are $\Cm$-adjacent iff they are in $\Cm(Y)$. Finally $E(\Ury)\supseteq E(\Ins)\cup E(\Cm)$, which is already complete.
\end{proof}
\begin{remark}\label{rem:join-commute}
While non-Hausdorff join is not commutative (in this respect it resembles the wreath product of groups), the corresponding graphs $\Sp$, $\Ury$ and $\Fun$  do commute.
\end{remark}
\begin{corollary}\label{cor:product-separation}
For non-empty spaces, $X\times Y$ is Hausdorff (respectively Urysohn, functionally Hausdorff, $T_1$) iff both $X$ and $Y$ are; and the non-Hausdorff join of two non-empty spaces is never $T_1$.
\end{corollary}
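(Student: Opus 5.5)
The plan is to read the corollary off Theorem~\ref{thm:products} together with Theorem~\ref{thm:hierarchy}(b), using one elementary fact about strong products. Fact: for graphs $\Gamma,\Delta$ with non-empty vertex sets, $\Gamma\sm\Delta$ is null if and only if both $\Gamma$ and $\Delta$ are null. Indeed, if $x\sim_\Gamma x'$, then fixing any $y\in V(\Delta)$ gives the edge $(x,y)\sim(x',y)$, since $y\eqadj{\Delta}y$; symmetrically in the second factor. Conversely, if both are null, then $(x,y)\sim(x',y')$ would force $x=x'$ and $y=y'$, contradicting $(x,y)\ne(x',y')$. Non-emptiness is exactly where the hypothesis on the spaces enters.

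For the product statements, take $\Gamma\in\{\Ins,\Ury,\Fun\}$. By Theorem~\ref{thm:hierarchy}(b), $X\times Y$ is Hausdorff, Urysohn or functionally Hausdorff respectively iff $\Gamma(X\times Y)$ is null. By Theorem~\ref{thm:products}(a) this graph equals $\Gamma(X)\sm\Gamma(Y)$, and by the fact above it is null iff both $\Gamma(X)$ and $\Gamma(Y)$ are null. Applying Theorem~\ref{thm:hierarchy}(b) again, this holds iff $X$ and $Y$ both have the property.

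The $T_1$ case needs a different route, since $\Sp$ does not satisfy the strong-product formula in general. One could use $\Cm$ instead, since $\Cm$ is null iff the space is $T_1$ and $\Cm$ is covered by Theorem~\ref{thm:products}(a); the same argument then goes through verbatim.

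For the join, Theorem~\ref{thm:products}(d) gives $\Sp(X\join Y)=\Sp(X)\vee\Sp(Y)$. When $X$ and $Y$ are non-empty, this graph contains an edge $xy$ with $x\in X$ and $y\in Y$, so it is not null. By Theorem~\ref{thm:hierarchy}(b), $X\join Y$ is therefore not $T_1$.

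There is no real obstacle here. The only points needing care are the use of non-emptiness in the strong-product fact, and routing the $T_1$ case through $\Cm$ rather than $\Sp$.
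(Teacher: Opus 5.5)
Your proposal is correct and is essentially the paper's own proof. The paper likewise combines the fact that a strong product of graphs on non-empty vertex sets is null iff both factors are null with Theorem~\ref{thm:hierarchy}(b) and Theorem~\ref{thm:products}; routing the $T_1$ case through $\Cm$ (which is covered by part (a)) and the join through $\Sp(X\join Y)=\Sp(X)\vee\Sp(Y)$ is exactly how those citations are meant to be used.
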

\begin{proof}
A strong product $\Gamma\sm\Delta$ of graphs on non-empty vertex sets is null iff both factors are null. Apply Theorem~\ref{thm:hierarchy}(b) and Theorem~\ref{thm:products}.
\end{proof}

This recovers classical facts from a uniform graph-theoretic statement. We also note that, by Theorem~\ref{thm:products}(a), questions about $\Ins$, $\Cm$, $\Ury$ and $\Fun$ for finite products reduce to the factors, exactly as questions about commuting graphs of nilpotent groups reduce to Sylow subgroups~\cite[\S6]{Cam-groups}.

\section{Universality and the spaces $X_\Gamma$}\label{sec:universal}

For the power graph of a group, the finite graphs that occur as induced subgraphs are exactly the comparability graphs, whereas every finite graph embeds in an enhanced power graph or a commuting graph~\cite[\S5]{Cam-groups}. The same dichotomy holds here. Since specialization graphs are comparability graphs of the specialization preorder (which, after Kolmogorov reduction, is a partial order), and since every partial order is the specialization order of an Alexandrov topology, we obtain:

\begin{proposition}\label{prop:induced}
\begin{enumerate}[label=\textup{(\alph*)}]
\item A finite graph is an induced subgraph of $\Sp(X)$ for some \textup{(}finite\textup{)} space $X$ if and only if it is a comparability graph.
\item Every finite graph $\Gamma$ is an induced subgraph of $\Ins(X)$, of $\Cm(X)$ and of $\Ury(X)$ for suitable finite $T_0$ spaces $X$ of height at most $1$.
\end{enumerate}
\end{proposition}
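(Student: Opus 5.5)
For (a), both directions are short. Suppose $\Gamma$ is an induced subgraph of $\Sp(X)$. By Theorem~\ref{thm:hierarchy}(e), $\Sp(X)$ is obtained from $\Sp(KX)$ by blowing vertices up into cliques of closed twins. Any induced subgraph of $\Sp(X)$ meeting each fibre of $q$ in at most one point is then an induced subgraph of $\Sp(KX)$, which is the comparability graph of a partial order. In general, a clique of twins can be ordered linearly inside its fibre. Concretely, I would refine the preorder on the chosen vertices to a partial order by ordering each fibre arbitrarily as a chain. This refinement is transitive and its comparability graph is unchanged, so $\Gamma$ is a comparability graph, since induced subgraphs of comparability graphs are comparability graphs. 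Conversely, if $\Gamma$ is the comparability graph of a finite poset $P$, give $P$ the Alexandrov topology of down-sets. Then its specialization order is $P$, so $\Sp(P)=\Gamma$ by Definition~\ref{def:graphs}.

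For (b), I would use the vertex–edge incidence construction, taking $X$ of height at most $1$. For $\Ins$, let $X=V(\Gamma)\sqcup E(\Gamma)$ with the order $e<v$ whenever the edge $e$ is incident with the vertex $v$. The edges are then open points and the vertices closed points. By Theorem~\ref{thm:hierarchy}(f), two vertices $u,v$ are $\Ins$-adjacent iff ${\downarrow}u\cap{\downarrow}v\neq\emptyset$. For $u\ne v$ this holds iff some edge lies below both, i.e.\ iff $uv\in E(\Gamma)$. So $\Ins(X)$ restricted to $V(\Gamma)$ is $\Gamma$. For $\Cm$, take the opposite space $X^{op}$, in which edges lie above vertices; common upper bounds then play the same role by Theorem~\ref{thm:hierarchy}(f).

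The $\Ury$ case needs a different construction, because $\Ury=\Ins\sq$ for Alexandrov spaces, and in the incidence space two vertices at distance $2$ in $\Gamma$ become $\Ury$-adjacent. Here I would exploit the incomparability of $\Ins$ and $\Cm$ and use the $\Cm$ construction instead. In $X^{op}$ the vertices are open (minimal) points, and each vertex $v$ has $U_v=\{v\}$, so no two vertices are $\Ins$-adjacent. Hence for vertices $u,v$ of $\Gamma$ we have $u\sim_{\Ury}v$ iff $\cl U_u\cap\cl U_v\neq\emptyset$, iff ${\uparrow}u\cap{\uparrow}v\ne\emptyset$, iff $u\sim_{\Cm}v$, iff $uv\in E(\Gamma)$. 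Thus $\Ury(X^{op})$ restricted to $V(\Gamma)$ is again $\Gamma$. The space is $T_0$ because it is a poset, and it has height at most $1$.

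The main obstacle is the $\Ury$ case: one must notice that the $\Ins$-incidence construction fails there because of squaring, and that the closed-point/open-point roles must be swapped so that vertices are minimal. Edge cases such as isolated vertices and the empty graph are harmless, since vertices are points regardless.
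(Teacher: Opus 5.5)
Your proof is correct and follows essentially the paper's route: the same vertex--edge incidence poset for $\Ins$, and its opposite for both $\Cm$ and $\Ury$, where minimality of the vertices ($U_v=\{v\}$) reduces the Urysohn condition to having a common upper bound. Your explicit lexicographic refinement of the specialization preorder in (a), ordering each indistinguishability class as a chain, spells out what the paper leaves implicit when it appeals to Kolmogorov reduction.
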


\begin{proof}
(a) An induced subgraph of a comparability graph is a comparability graph; conversely a comparability graph of a poset $P$ is $\Sp$ of the Alexandrov space of $P$.

(b) Let $\Gamma=(V,E)$. Let $P$ be the poset on $V\sqcup E$ with $e<v$ whenever $v\in e$. In the corresponding finite space, $u,v\in V$ have a common lower bound iff $uv\in E$, so $\Ins(P)$ induces $\Gamma$ on $V$ by Theorem~\ref{thm:hierarchy}(f). For $\Cm$ and $\Ury$ use the opposite poset $P^{op}$ (with $v<e$ for $v\in e$): now the elements of $V$ are minimal, two of them have a common upper bound iff they are adjacent, so $\Cm(P^{op})$ induces $\Gamma$ on $V$; and for minimal $u,v$ the zig-zag condition in the proof of Theorem~\ref{thm:hierarchy}(f) reduces to $u\le w\ge v$, so $\Ury(P^{op})$ also induces $\Gamma$ on $V$.
\end{proof}

In fact much more is true: we can prescribe the whole inseparability graph, not just an induced subgraph, provided we allow infinite spaces and ignore isolated vertices; and the construction simultaneously controls the Urysohn and functional graphs.

\begin{definition}\label{def:XGamma}
Let $\Gamma=(V,E)$ be a countable graph. For each edge $e\in E$ let $L_e=(0,1)\times\{e\}$ (a ``ray''), and set
\[
X_\Gamma=V\ \sqcup\ \bigcup_{e\in E}L_e .
\]
Topologize $X_\Gamma$ by the family $\mathcal B$ consisting of
\begin{itemize}[leftmargin=2em]
\item the sets $I\times\{e\}$ with $I\subseteq(0,1)$ an open interval and $e\in E$; and
\item for $v\in V$ and $0<\varepsilon\le 1$, the \emph{stars} $S_\varepsilon(v)=\{v\}\cup\bigcup_{e\ni v}(0,\varepsilon)\times\{e\}$.
\end{itemize}
\end{definition}

Thus each ray $L_e$, $e=\{u,v\}$, is an open interval whose end $0$ is approached by \emph{both} $u$ and $v$; the ends at $1$ are left open. An isolated vertex $v$ has $S_\varepsilon(v)=\{v\}$. See Figure~\ref{fig:XGamma}.

\begin{figure}[ht]
\centering
\begin{tikzpicture}[scale=0.9,pt/.style={circle,fill,inner sep=1.5pt}]
\begin{scope}
\node[pt,label=left:$1$] (1) at (0,1.2) {}; \node[pt,label=right:$2$] (2) at (1.4,1.2) {};
\node[pt,label=left:$3$] (3) at (0.7,0) {}; \node[pt,label=below:$4$] (4) at (0.7,-1.2) {};
\draw (1)--(2)--(3)--(1) (3)--(4);
\node at (0.7,-2.1) {$\Gamma$};
\end{scope}
\begin{scope}[xshift=4.3cm]
\foreach \lab/\x/\u/\v in {12/0/1/2,13/2.3/1/3,23/4.6/2/3,34/6.9/3/4}{
 \draw[very thick,blue!60!black] (\x,0.35) -- (\x,2.6);
 \node[font=\scriptsize,above] at (\x,2.6) {$L_{\lab}$};
 \node[pt,label={[font=\scriptsize]below:$\u$}] (A\lab) at (\x-0.45,-0.25) {};
 \node[pt,label={[font=\scriptsize]below:$\v$}] (B\lab) at (\x+0.45,-0.25) {};
 \draw[dashed,-{Stealth}] (\x,0.9)--(A\lab);
 \draw[dashed,-{Stealth}] (\x,0.9)--(B\lab);
}
\node[font=\footnotesize,text width=8.2cm,align=center] at (3.45,-1.6) {each drawn copy of a vertex is the \emph{same} point of $X_\Gamma$; the end $0$ of $L_e$ converges to both endpoints of $e$};
\end{scope}
\end{tikzpicture}
\caption{The space $X_\Gamma$ for the ``paw'' $\Gamma$. Here $\Ins(X_\Gamma)$ restricted to $\{1,2,3,4\}$ is $\Gamma$, $\Ury(X_\Gamma)$ restricted to it is $\Gamma\sq=K_4$, and all ray points are isolated in every graph.}
\label{fig:XGamma}
\end{figure}

\begin{theorem}\label{thm:XGamma}
Let $\Gamma=(V,E)$ be a countable graph. Then $\mathcal B$ is a base of a topology on $X_\Gamma$, and:
\begin{enumerate}[label=\textup{(\alph*)}]
\item $X_\Gamma$ is a second countable $T_1$ space; in particular $\Sp(X_\Gamma)$ and $\Cm(X_\Gamma)$ are null.
\item $\Ins(X_\Gamma)=\Gamma\ \sqcup\ \overline{K}_{\,X_\Gamma\setminus V}$: two vertices of $\Gamma$ are inseparable iff they are adjacent in $\Gamma$, and every point of a ray is isolated.
\item $\Ury(X_\Gamma)=\Gamma\sq\ \sqcup\ \overline{K}_{\,X_\Gamma\setminus V}$.
\item The non-singleton classes of $\Fun(X_\Gamma)$ are exactly the vertex sets of the connected components of $\Gamma$ with at least two vertices.
\item If every vertex of $\Gamma$ has degree $2$, then $X_\Gamma$ is locally homeomorphic to $\R$, i.e.\ a second countable, non-Hausdorff $1$-manifold \textup{(}provided $E\neq\emptyset$\textup{)}.
\end{enumerate}
\end{theorem}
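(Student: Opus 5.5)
First I will check that $\mathcal B$ is a base. The sets cover $X_\Gamma$. For intersections: two interval sets on the same ray meet in an interval set. An interval set meets a star $S_\varepsilon(v)$ in $(I\cap(0,\varepsilon))\times\{e\}$, again an interval set or empty. For two stars $S_\varepsilon(u)$ and $S_\delta(v)$ with $u\ne v$, the intersection is $(0,\min(\varepsilon,\delta))\times\{e\}$ if $e=uv\in E$ and empty otherwise, since rays belong to single edges. For $u=v$ the intersection is $S_{\min}(v)$. In every case a point of the intersection lies in a basic set inside it.

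\emph{(a)} Second countability follows by restricting to rational intervals and $\varepsilon=1/m$; since $V$ and $E$ are countable, this gives countably many sets. For $T_1$: a ray point $t$ is avoided by any star of radius $<t$ and by interval sets on its own ray not containing $t$. A vertex $v\ne u$ is not in $S_\varepsilon(u)$. So every point has a neighbourhood missing any other given point. By Theorem~\ref{thm:hierarchy}(b), $\Sp$ and $\Cm$ are null.

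\emph{(b)} Ray points are separated from everything. On its own ray, small intervals are Hausdorff-separated. Against another ray, or against a vertex $v$, we use a small interval around $t$ together with $S_\varepsilon(v)$ for $\varepsilon<t$. Two vertices $u,v$ are separated by $S_1(u),S_1(v)$ exactly when $uv\notin E$, by the intersection computation above. If $uv\in E$, every pair of basic neighbourhoods meets on $L_{uv}$.

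\emph{(c)} The key computation is
\[\cl S_\varepsilon(v)=S_\varepsilon(v)\cup N(v)\cup \bigcup_{e\ni v}\{\varepsilon\}\times\{e\}.\]
It holds because a neighbour $w$ of $v$ has every star meeting $(0,\varepsilon)\times\{vw\}$, while a non-neighbour vertex $w$ has $S_1(w)$ disjoint from $S_\varepsilon(v)$. With $\varepsilon=1/2$ and $\delta=1/3$, the closures of stars at $u\ne v$ can then meet only in vertices; the boundary ray points are distinct because the radii differ. So they meet iff $N[u]\cap N[v]\ne\emptyset$, i.e.\ iff $d_\Gamma(u,v)\le2$. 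Shrinking the stars does not change the vertex part of the closure, so this criterion holds for all neighbourhoods. Ray points have neighbourhoods whose closures are small closed intervals, which can be chosen disjoint from suitable closed stars or intervals around any other point. This gives $\Ury=\Gamma\sq$ plus isolated vertices, consistent with Theorem~\ref{thm:hierarchy}(d).

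\emph{(d)} By Theorem~\ref{thm:hierarchy}(c), $\Fun$ contains the transitive closure of $\Ins$. So each component of $\Gamma$ lies inside a $\Fun$-class. The main obstacle is the converse: we need enough continuous real functions.
\begin{itemize}[leftmargin=2em]
\item To separate a vertex set $C$ (a component) from the rest, let $f=1$ on $C$ and on rays of edges in $C$, and $f=0$ elsewhere. This is continuous because $C\cup\bigcup_{e\subseteq C}L_e$ is clopen: it is a union of stars and rays, and its complement is likewise open.
\item To separate a ray point $(t,e)$ from any other point, take a continuous bump supported in a small interval around $t$ inside $L_e$, extended by $0$. It is continuous since its support is a compact interval $[a,b]\times\{e\}$ with $a>0$, whose complement is open.
\item Two ray points on the same ray are separated in the same way.
\end{itemize}
Hence the non-singleton classes are exactly the components of $\Gamma$ with at least two vertices; an isolated vertex is separated by the clopen $\{v\}$.

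\emph{(e)} If every vertex has degree $2$, the star $S_\varepsilon(v)$ is two half-open rays glued at $v$, homeomorphic to $(-\varepsilon,\varepsilon)$. Ray points have interval neighbourhoods. So $X_\Gamma$ is locally Euclidean of dimension $1$. It is second countable by (a), and non-Hausdorff by (b) as soon as $E\ne\emptyset$.
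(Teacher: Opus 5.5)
Your proposal is correct and follows the paper's proof essentially step by step: the star-intersection formula, the closure of a star, the clopen sets $C\cup\bigcup_{e\subseteq C}L_e$, and functions supported on a single ray (your bump functions do the job of the paper's $g_{e_0}$). One slip in (c): if $uv\in E$ then $\cl S_{1/2}(u)\cap\cl S_{1/3}(v)$ contains the ray points $(0,1/3]\times\{uv\}$, so the closures do not ``meet only in vertices'' and choosing different radii achieves nothing; this is harmless, because in that case $u\in N[u]\cap N[v]$ anyway, which is exactly how the paper handles it.
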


\begin{proof}
\emph{$\mathcal B$ is a base.} Every point lies in some member of $\mathcal B$. We check that the intersection of two members is a union of members. Two interval sets intersect in an interval set or in the empty set. An interval set $I\times\{e\}$ meets a star $S_\varepsilon(v)$ in $(I\cap(0,\varepsilon))\times\{e\}$ if $v\in e$ and in $\emptyset$ otherwise; both are interval sets or empty. For stars at the same vertex, $S_\varepsilon(v)\cap S_\delta(v)=S_{\min(\varepsilon,\delta)}(v)$. For stars at distinct vertices $u\neq v$, a ray $L_e$ contributes to both only if $e\ni u$ and $e\ni v$, i.e.\ $e=\{u,v\}$; hence
\begin{equation}\label{eq:starint}
S_\varepsilon(u)\cap S_\delta(v)=\begin{cases}(0,\min(\varepsilon,\delta))\times\{uv\}&\text{if }uv\in E,\\ \emptyset&\text{otherwise}.\end{cases}
\end{equation}
This is an interval set or empty. Hence $\mathcal B$ is a base. The neighbourhoods of a ray point $(t,e)$ are those of $t$ in $(0,1)$, transported to $L_e$ (each ray is an open subset homeomorphic to $(0,1)$), and the stars $S_\varepsilon(v)$, $\varepsilon\in(0,1]$, form a neighbourhood base at $v$.

\emph{(a)} Since $V$ and $E$ are countable, the interval sets with rational endpoints and the stars with rational $\varepsilon$ form a countable base (every $S_\varepsilon(v)$ contains $S_{\varepsilon'}(v)$ for rational $\varepsilon'<\varepsilon$, and every interval set is a union of rational ones). For $T_1$ we show that every point $p$ has a neighbourhood base avoiding any given other point $p'$. If $p=(t,e)$ is a ray point, a small interval around $t$ in $L_e$ avoids $p'$. If $p=v\in V$, then $S_\varepsilon(v)$ contains no vertex other than $v$, and it avoids a ray point $(t,e)$ as soon as $\varepsilon\le t$. Hence $X_\Gamma$ is $T_1$, and $\Sp,\Cm$ are null by Theorem~\ref{thm:hierarchy}(b).

\emph{(b)} By~\eqref{eq:starint}, for distinct $u,v\in V$ the basic neighbourhoods $S_\varepsilon(u)$ and $S_\delta(v)$ always meet if $uv\in E$, and are disjoint (already for $\varepsilon=\delta=1$) if $uv\notin E$. Since stars form neighbourhood bases, $u\sim_{\Ins}v$ iff $uv\in E$. Next let $p=(t,e)$ be a ray point and $p'\ne p$. If $p'=(t',e)$ lies on the same ray, disjoint intervals around $t$ and $t'$ separate them. If $p'=(t',e')$ with $e'\ne e$, the rays themselves are disjoint open sets. If $p'=v\in V$, then the interval $(t/2,1)\times\{e\}$ and the star $S_{t/2}(v)$ are disjoint. Hence ray points are isolated in $\Ins(X_\Gamma)$.

\emph{(c)} We first compute the closure of a star. Let $v\in V$ and $0<\varepsilon<1$. We claim
\begin{equation}\label{eq:starclosure}
\cl S_\varepsilon(v)=\{v\}\cup N_\Gamma(v)\cup\bigcup_{e\ni v}(0,\varepsilon]\times\{e\}.
\end{equation}
Indeed, a vertex $w\in N_\Gamma(v)$ belongs to the closure because each $S_\delta(w)$ meets $S_\varepsilon(v)$ by~\eqref{eq:starint}; a vertex $w\notin N_\Gamma[v]$ does not, because $S_1(w)\cap S_\varepsilon(v)=\emptyset$. A ray point $(t,e)$ with $e\ni v$ is in the closure iff $t\le\varepsilon$ (closure of $(0,\varepsilon)$ in $(0,1)$), and a ray point on a ray $L_e$ with $v\notin e$ is not, since $L_e$ is an open set disjoint from $S_\varepsilon(v)$. This proves~\eqref{eq:starclosure}.

Let $u\ne v$ be vertices. Since stars form neighbourhood bases and shrinking a neighbourhood shrinks its closure, $u\sim_{\Ury}v$ iff $\cl S_\varepsilon(u)\cap\cl S_\delta(v)\neq\emptyset$ for all small $\varepsilon,\delta$. By~\eqref{eq:starclosure}, the vertices in this intersection are exactly those in $N_\Gamma[u]\cap N_\Gamma[v]$, and the ray points in it lie on a common ray $L_e$ with $e\ni u,v$, which only happens if $uv\in E$, in which case $N_\Gamma[u]\cap N_\Gamma[v]\ni u$ anyway. Therefore $u\sim_{\Ury}v$ iff $N_\Gamma[u]\cap N_\Gamma[v]\neq\emptyset$ iff $d_\Gamma(u,v)\le 2$, i.e.\ iff $uv\in E(\Gamma\sq)$.

It remains to see that ray points are isolated in $\Ury(X_\Gamma)$. Let $p=(t,e)$. For a second ray point $p'$ on a ray $L_{e'}$, choose disjoint closed intervals $[t-\eta,t+\eta]$ and $[t'-\eta,t'+\eta]$ in $(0,1)$ (or arbitrary ones if $e'\ne e$): the open intervals they contain are neighbourhoods whose closures in $X_\Gamma$ are these closed intervals. Indeed, a set $J\times\{e\}$ with $J\subseteq(0,1)$ a compact interval is closed in $X_\Gamma$: every vertex $w$ has a star $S_\delta(w)$ with $\delta<\min J$ disjoint from it, and every ray point outside it has an interval neighbourhood disjoint from it. For $p'=v\in V$, take $\eta<t/2$; by~\eqref{eq:starclosure} the closure of $S_{t/2}(v)$ meets $L_e$ at most in $(0,t/2]\times\{e\}$, which is disjoint from $[t-\eta,t+\eta]\times\{e\}$.

\emph{(d)} By Theorem~\ref{thm:hierarchy}(c), each class of $\Fun(X_\Gamma)$ contains the vertex set of a component of $\Gamma$, since $\Fun$ contains the transitive closure of $\Ins$ and by (b) $\Ins$ restricted to $V$ is $\Gamma$. We now construct enough continuous functions to show that there are no further identifications.

(i) Let $C$ be the vertex set of a connected component of $\Gamma$ and put $A_C=C\cup\bigcup\{L_e: e\subseteq C\}$. $A_C$ is open, being the union of the stars $S_1(v)$ ($v\in C$) and of the rays $L_e$ ($e\subseteq C$), since every edge at a vertex of $C$ lies inside $C$. Its complement is the analogous set for $V\setminus C$, also open. Hence the characteristic function $\mathbf 1_{A_C}$ is continuous, and it separates the vertices of $C$ from all vertices outside $C$.

(ii) For $e_0\in E$ define $g_{e_0}\colon X_\Gamma\to\R$ by $g_{e_0}(t,e_0)=t$ and $g_{e_0}=0$ elsewhere (on all vertices and all other rays). It is continuous on the open set $L_{e_0}$ and on the open set $\bigcup_{e\ne e_0}L_e$; at a vertex $v$ we have $g_{e_0}(S_\varepsilon(v))\subseteq[0,\varepsilon)$, so $g_{e_0}$ is continuous at $v$ as well. The function $g_{e_0}$ takes the value $t>0$ at $(t,e_0)$ and a different value at every other point of $X_\Gamma$.

By (ii), every ray point forms a singleton class of $\Fun$; by (i), vertices in different components of $\Gamma$ lie in different classes. This proves (d). In particular an isolated vertex of $\Gamma$ is a singleton class.

\emph{(e)} If $\deg v=2$ with edges $e,e'$, then $S_1(v)=((0,1)\times\{e\})\cup\{v\}\cup((0,1)\times\{e'\})$, and the map sending $(t,e)\mapsto -t$, $v\mapsto0$, $(t,e')\mapsto t$ is a homeomorphism $S_1(v)\to(-1,1)$ (it maps the basic sets of $S_1(v)$, namely stars $S_\varepsilon(v)$ and intervals, onto the intervals $(-\varepsilon,\varepsilon)$ and onto open intervals avoiding $0$, which form a base of $(-1,1)$). Ray points have neighbourhoods homeomorphic to open intervals. Hence $X_\Gamma$ is a $1$-manifold; it is second countable by (a) and non-Hausdorff by (b) whenever $E\ne\emptyset$.
\end{proof}

\begin{corollary}\label{cor:defect}
Call the subgraph of $\Ins(X)$ induced on its non-isolated vertices the \emph{Hausdorff defect graph} of $X$. Every countable graph without isolated vertices is the Hausdorff defect graph of a second countable $T_1$ space. In particular the finite cycles are Hausdorff defect graphs of second countable non-Hausdorff $1$-manifolds.
\end{corollary}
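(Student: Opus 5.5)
The corollary follows by applying Theorem~\ref{thm:XGamma} to the given graph. Let $\Gamma=(V,E)$ be a countable graph with no isolated vertices, and take $X=X_\Gamma$ from Definition~\ref{def:XGamma}. By Theorem~\ref{thm:XGamma}(a), $X_\Gamma$ is a second countable $T_1$ space. By Theorem~\ref{thm:XGamma}(b), $\Ins(X_\Gamma)=\Gamma\sqcup\overline{K}_{X_\Gamma\setminus V}$.

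We now identify the non-isolated vertices of $\Ins(X_\Gamma)$. Every ray point is isolated. A vertex $v\in V$ has the same neighbours in $\Ins(X_\Gamma)$ as in $\Gamma$, and by hypothesis it has at least one neighbour in $\Gamma$. So the set of non-isolated vertices is exactly $V$, and the induced subgraph on $V$ is $\Gamma$. Hence $\Gamma$ is the Hausdorff defect graph of $X_\Gamma$. The hypothesis of no isolated vertices is needed: an isolated vertex of $\Gamma$ would remain isolated in $\Ins(X_\Gamma)$ and would be discarded from the defect graph.

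For cycles, take $\Gamma=C_n$. We need $n\ge3$ for $C_n$ to be a simple graph; under the paper's convention, $n=1,2$ degenerate and are excluded. Every vertex of $C_n$ has degree $2$, so Theorem~\ref{thm:XGamma}(e) shows that $X_{C_n}$ is a second countable non-Hausdorff $1$-manifold, since $E\ne\emptyset$. By the first part, its Hausdorff defect graph is $C_n$.

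No step presents a real obstacle. The only point requiring care is matching "non-isolated vertices" with $V$, which is exactly what the no-isolated-vertices hypothesis ensures.
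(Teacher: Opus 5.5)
Your proposal is correct and follows exactly the route the paper intends. The paper gives no separate proof and states the corollary as an immediate consequence of Theorem~\ref{thm:XGamma}: parts (a) and (b) give the general statement, once you note, as you do, that the no-isolated-vertices hypothesis makes $V$ precisely the set of non-isolated vertices of $\Ins(X_\Gamma)$, and part (e) applied to the $2$-regular graph $C_n$ ($n\ge 3$) gives the manifold case.
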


\begin{remark}
Theorem~\ref{thm:XGamma} should be contrasted with the finite case treated in Section~\ref{sec:realization}, where only a restricted class of graphs occurs as inseparability graphs. The difference is that in $X_\Gamma$ the ``witnesses'' of inseparability are germs of rays rather than points: in a finite space two inseparable points always have a common point in their smallest neighbourhoods, and this point is simplicial in $\Ins(X)$.
\end{remark}

\section{Finite spaces: invariants of the hierarchy}\label{sec:finite}

In this section $X$ is a finite space. By Theorem~\ref{thm:hierarchy}(e) all five graphs of $X$ are obtained from those of the finite $T_0$ space $KX$ by blowing up vertices into cliques of closed twins; so we lose little by assuming that $X$ is $T_0$, i.e.\ a finite poset with the Alexandrov topology in which open sets are down-sets. By Theorem~\ref{thm:hierarchy}(f) the five graphs are described order-theoretically as follows ($x\ne y$):
\begin{align*}
x\sim_{\Sp}y&\iff x\le y\text{ or }y\le x, & x\sim_{\Ins}y&\iff M(x)\cap M(y)\ne\emptyset,\\
x\sim_{\Cm}y&\iff M^*(x)\cap M^*(y)\neq\emptyset, & x\sim_{\Ury}y&\iff d_{\Ins}(x,y)\le 2,\\
x\sim_{\Fun}y&\iff x,y\text{ in the same component}.
\end{align*}
(For $\Ins$: a common lower bound $z$ of $x,y$ lies above a minimal element, which is then in $M(x)\cap M(y)$; dually for $\Cm$.) In particular $\Cm(X)=\Ins(X^{op})$. The following is immediate from these descriptions and is the analogue of the ``zig-zag'' description of the Urysohn graph.

\begin{lemma}\label{lem:Ury-open}
Let $X$ be a finite $T_0$ space and $x\ne y$. Then $x\sim_{\Ury}y$ iff there exist $m\in M(x)$ and $m'\in M(y)$ with $\cl\{m\}\cap\cl\{m'\}\neq\emptyset$ \textup{(}possibly $m=m'$\textup{)}.
\end{lemma}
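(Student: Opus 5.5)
The plan is to start from the zig-zag description of the Urysohn graph in the proof of Theorem~\ref{thm:hierarchy}(f) and then push the lower points of the zig-zag down to open points. Since a finite $T_0$ space is Alexandrov, that proof gives $x\sim_{\Ury}y$ iff $\cl U_x\cap\cl U_y\neq\emptyset$, iff there are $a\le x$, $b\le y$ and $w$ with $a\le w$ and $b\le w$. Equivalently, by~\eqref{eq:closure}, $\cl\{a\}\cap\cl\{b\}\ni w$.

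For the forward direction, take such $a,b,w$. Because $X$ is finite, the down-set $U_a$ contains a minimal element $m$, and as noted in the preliminaries $m$ is minimal in $X$, so $m\in\Ob$. From $m\le a\le x$ we get $m\in U_x\cap\Ob=M(x)$. In the same way choose $m'\in M(y)$ with $m'\le b$. Then $\cl\{a\}\subseteq\cl\{m\}$, since $m\le a$ means $a\in\cl\{m\}$ and closures are up-sets. Likewise $\cl\{b\}\subseteq\cl\{m'\}$. Hence $w\in\cl\{m\}\cap\cl\{m'\}$.

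The converse is immediate. Given $m\in M(x)$, $m'\in M(y)$ and $w\in\cl\{m\}\cap\cl\{m'\}$, the triple $a=m$, $b=m'$, $w$ is a zig-zag $x\ge m\le w\ge m'\le y$. So $x\sim_{\Ury}y$ by the characterization above. The case $m=m'$ is allowed throughout: then $w=m$ works, and this corresponds to $x\sim_{\Ins}y$.

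There is no real obstacle here. The only point needing care is that minimal elements of $U_a$ are open points of $X$, and this holds because $U_a$ is a down-set. Alternatively, the statement can be phrased as $d_{\Ins}(x,y)\le2$, which is the same condition read through $M(\cdot)$ via $x\sim_{\Ins}z$ iff $M(x)\cap M(z)\neq\emptyset$.
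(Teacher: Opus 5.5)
Your proposal is correct and follows essentially the same route as the paper. Both start from the zig-zag characterization $x\ge a\le w\ge b\le y$ in the proof of Theorem~\ref{thm:hierarchy}(f) and then replace $a$ and $b$ by minimal (open) points below them, which keeps the zig-zag intact.
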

\begin{proof}
By the proof of Theorem~\ref{thm:hierarchy}(f), $x\sim_{\Ury}y$ iff there is a zig-zag $x\ge a\le w\ge b\le y$. Replacing $a$ and $b$ by minimal elements below them preserves the zig-zag.
\end{proof}

\begin{proposition}\label{prop:Sp}
Let $X$ be a finite $T_0$ space.
\begin{enumerate}[label=\textup{(\alph*)}]
\item $\Sp(X)$ is the comparability graph of $(X,\le)$; hence it is perfect, $\omega(\Sp(X))=\chi(\Sp(X))$ is the number of points of a longest chain, and $\alpha(\Sp(X))=\theta(\Sp(X))$ is the width of $X$.
\item $\Cl(\Sp(X))=\Delta(X)$; hence $|\Cl(\Sp(X))|$ is weakly homotopy equivalent to $X$.
\item $\Sp(X)$ is a cograph iff $X$ can be built from one-point spaces by topological sums and non-Hausdorff joins.
\end{enumerate}
\end{proposition}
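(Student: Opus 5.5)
For (a) the plan is to read the description of $\Sp$ in Definition~\ref{def:graphs}(i) directly: since $X$ is $T_0$, the specialization preorder is a partial order, and $x\sim_{\Sp}y$ holds exactly when $x,y$ are distinct and comparable. So $\Sp(X)$ is the comparability graph of $(X,\le)$. Perfection is then the classical theorem that comparability graphs are perfect; I will cite it rather than reprove it. In outline, cliques are chains, and colouring $x$ by the number of points of a longest chain ending at $x$ is a proper colouring with $\omega$ colours, on every induced subposet. This gives $\omega=\chi=$ the number of points of a longest chain. Independent sets are antichains and clique covers are chain partitions, so $\alpha=\theta=$ width is exactly Dilworth's theorem. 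Perfection also follows from the Perfect Graph Theorem, since the complement of a comparability graph satisfies $\alpha=\theta$ hereditarily by Dilworth.

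For (b), the finite cliques of $\Sp(X)$ are precisely the non-empty sets of pairwise comparable elements, that is, chains. Hence $\Cl(\Sp(X))=\Delta(X)$ as simplicial complexes, and McCord's theorem gives a weak homotopy equivalence $|\Delta(X)|\to X$.

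For (c) I will use the standard characterization that a graph is a cograph iff it is built from $K_1$ by disjoint unions and joins, equivalently iff every induced subgraph on at least two vertices is disconnected or has disconnected complement. By Theorem~\ref{thm:products}(c),(d), $\Sp$ turns sums into disjoint unions and non-Hausdorff joins into graph joins. So every space built in this way has a cograph specialization graph, by induction on the construction.

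The converse is the main step, and I would argue by induction on $|X|$. Suppose $\Sp(X)$ is a cograph and $|X|\ge2$. If $\Sp(X)$ is disconnected, then by Theorem~\ref{thm:hierarchy}(f) its components are the connected components of $X$, which are clopen, so $X$ is their topological sum. Otherwise the complement of $\Sp(X)$ is disconnected. Let $A$ be one of its components and $B=X\setminus A$; then every $a\in A$ is comparable to every $b\in B$. I then need to orient this consistently, namely show either $a<b$ for all such pairs or $b<a$ for all such pairs. This orientation is the obstacle, and I would handle it as follows. Suppose $a<b$ and $b'<a'$ with $a,a'\in A$ and $b,b'\in B$. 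Since the incomparability graph on $A$ is connected, I can move along incomparable pairs in $A$. If $a_1\parallel a_2$ and $a_1<b$, then $a_2<b$: otherwise $b<a_2$ and transitivity give $a_1<a_2$, a contradiction. Symmetrically, the direction with respect to a fixed $b$ is constant along incomparability paths in $A$. Likewise $B$ splits into incomparability components, and the same argument applies in the other variable. Grouping the components of the complement of $\Sp(X)$, I then find a component on which the direction is uniform. More directly, fix $a\in A$ and let $B^-=\{b\in B: b<a\}$ and $B^+=\{b\in B:b>a\}$. By the constancy just proved, these sets are independent of the choice of $a\in A$, and every element of $B^-$ is below every element of $B^+$, since $b^-<a<b^+$. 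Hence $X$ is the non-Hausdorff join $B^-\join(A\join B^+)$, with empty pieces omitted; here I use that $\join$ is associative and that the order description matches the topology of Theorem~\ref{thm:products}(d). Each piece is a proper subspace whose specialization graph is an induced subgraph, hence a cograph, and induction finishes the argument.
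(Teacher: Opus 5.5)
Your proposal is correct. Parts (a) and (b), and the forward direction of (c), follow the paper's proof essentially verbatim. For (a) you use Mirsky's colouring and Dilworth's theorem, for (b) cliques are chains plus McCord, and for (c) you use Theorem~\ref{thm:products}(c),(d).

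There is one small slip in your side remark on the Perfect Graph Theorem. By Dilworth it is the comparability graph itself, not its complement, that satisfies $\alpha=\theta$ hereditarily. That makes the complement perfect, and the Perfect Graph Theorem then transfers perfection back. Your main argument via the height colouring already gives perfection directly, so this is harmless.

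The real difference is in the converse of (c). The paper cites the order-theoretic theorem (Jung; Valdes--Tarjan--Lawler) that a poset whose comparability graph is a cograph is series-parallel, and notes that ordinal sum is non-Hausdorff join. You instead reprove what is needed, by induction on $|X|$ using the decomposition of a cograph on at least two vertices as disconnected or co-disconnected.
\begin{itemize}
\item The disconnected case gives a topological sum by Theorem~\ref{thm:hierarchy}(f).
\item In the co-disconnected case, your orientation lemma is sound: for fixed $b\in B$, the direction of comparison with $b$ is constant along incomparability edges inside the co-component $A$, and $A$ is connected in the complement. Hence $B^-$ and $B^+$ do not depend on the chosen $a$, and $B^-<A<B^+$ elementwise. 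Since a finite space is determined by its specialization order, and subspaces carry the restricted order, this gives $X\cong B^-\join(A\join B^+)$, with each nonempty piece proper.
\end{itemize}
Your sentence about ``grouping the components'' before ``More directly'' can simply be dropped; the direct argument suffices.

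The paper's route is shorter but outsources the key step to the series-parallel literature. Yours is self-contained and uses only the standard cograph decomposition. It also shows concretely how each sum/join step in the decomposition of $\Sp(X)$ lifts to a sum/join decomposition of $X$; the only extra input is the orientation forced at each join.
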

\begin{proof}
(a) A proper colouring of a comparability graph is a partition into antichains, so $\chi=\omega$ is Mirsky's theorem~\cite{Mirsky}; since induced subgraphs of comparability graphs are comparability graphs, $\Sp(X)$ is perfect; $\alpha=\theta=$ width is Dilworth's theorem~\cite{Dilworth}. (b) A finite set of points is a clique in the comparability graph iff it is a chain; now apply McCord's theorem~\cite{McCord}. (c) By Theorem~\ref{thm:products}(c),(d), $\Sp(X\sqcup Y)=\Sp(X)\sqcup\Sp(Y)$ and $\Sp(X\join Y)=\Sp(X)\vee\Sp(Y)$, and cographs are exactly the graphs built from $K_1$ by disjoint unions and joins~\cite{CLS}. Conversely, a poset whose comparability graph is a cograph is series-parallel~\cite{Jung,VTL}, i.e.\ built by disjoint sums and ordinal sums; and the ordinal sum $P\oplus Q$ is the non-Hausdorff join of the associated spaces.
\end{proof}

\begin{corollary}\label{cor:Sp-weak}
If $X$ and $Y$ are finite $T_0$ spaces with $\Sp(X)\cong\Sp(Y)$, then $X$ and $Y$ are weakly homotopy equivalent.
\end{corollary}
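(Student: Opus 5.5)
The argument runs through clique complexes, using Proposition~\ref{prop:Sp}(b). Let $\varphi\colon\Sp(X)\to\Sp(Y)$ be a graph isomorphism. The first step is the observation that the clique complex of a graph depends only on the graph: a graph isomorphism sends cliques to cliques bijectively, so $\varphi$ induces a simplicial isomorphism $\Cl(\Sp(X))\cong\Cl(\Sp(Y))$. This isomorphism need not be order-preserving, and it does not have to be. It follows that the geometric realizations are homeomorphic: $|\Cl(\Sp(X))|\cong|\Cl(\Sp(Y))|$.

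The second step identifies these complexes with order complexes. By Proposition~\ref{prop:Sp}(b), $\Cl(\Sp(X))=\Delta(X)$ and $\Cl(\Sp(Y))=\Delta(Y)$, since a finite set of points is a clique of the comparability graph exactly when it is a chain. McCord's theorem~\cite{McCord} gives weak homotopy equivalences $|\Delta(X)|\to X$ and $|\Delta(Y)|\to Y$. We therefore have the zig-zag
\[
X\xleftarrow{\ \simeq_w\ }|\Delta(X)|\cong|\Delta(Y)|\xrightarrow{\ \simeq_w\ }Y .
\]

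The only point needing care is that ``weakly homotopy equivalent'' should be read as the equivalence relation generated by weak homotopy equivalences, that is, as a zig-zag of the kind above. This is the standard convention for finite spaces, as in~\cite{Barmak}. If a single direct map is preferred, one can note that $X$ and $Y$ have isomorphic homotopy groups at every basepoint, via the homeomorphism of realizations, which is the usual meaning. Beyond fixing this convention, no step presents a real obstacle; the content is entirely in Proposition~\ref{prop:Sp}(b) together with McCord's theorem.
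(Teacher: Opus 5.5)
Your proof is correct and is the paper's argument: the graph isomorphism induces $\Cl(\Sp(X))\cong\Cl(\Sp(Y))$, which is $\Delta(X)\cong\Delta(Y)$ by Proposition~\ref{prop:Sp}(b), and McCord's theorem then gives the zig-zag $X\simeq_w|\Delta(X)|\cong|\Delta(Y)|\simeq_w Y$. Your closing aside about isomorphic homotopy groups is unnecessary and slightly off, since abstractly isomorphic homotopy groups do not by themselves give a weak equivalence; the zig-zag reading you adopt first is the right one, and it is the one the paper uses.
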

\begin{proof}
A graph isomorphism induces an isomorphism of clique complexes, hence $\Delta(X)\cong\Delta(Y)$ by Proposition~\ref{prop:Sp}(b), and $X\simeq_w|\Delta(X)|\cong|\Delta(Y)|\simeq_w Y$.
\end{proof}

This is the topological analogue of~\cite[Theorem~2.9]{Cam-groups}, which says that groups with isomorphic power graphs have the same Gruenberg--Kegel graph. We shall see in Example~\ref{ex:same-Ins} that the analogue fails for the inseparability graph. On the other hand, Remark~\ref{rem:Sp-not-determine} shows that, unlike the power graph, $\Sp(X)$ does not determine $X$ even up to homeomorphism and duality.

\begin{proposition}\label{prop:alpha}
Let $X$ be a finite $T_0$ space. Then
\[
\alpha(\Ins(X))=\theta(\Ins(X))=|\Ob|,\quad \alpha(\Cm(X))=\theta(\Cm(X))=|\Cb|,
\]
both $\alpha(\Fun(X))$ and $\theta(\Fun(X))$ equal the number of connected components of $X$, and $\alpha(\Ury(X))$ is the largest number of open points with pairwise disjoint closures. Moreover $\omega(\Ins(X))\ge\max_{m\in\Ob}|\cl\{m\}|$.
\end{proposition}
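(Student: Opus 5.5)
The approach is the standard sandwich $\theta\ge\alpha$: for each graph I will exhibit an independent set and a clique cover of the same size.

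For $\Ins(X)$, the open points $\Ob$ are pairwise non-adjacent, since $M(m)=\{m\}$ for $m\in\Ob$ and distinct open points have disjoint singletons $M$. Hence $\alpha(\Ins(X))\ge|\Ob|$. For the cover, for each $m\in\Ob$ let $C_m=\cl\{m\}={\uparrow}m$. Any two points of $C_m$ have $m$ as a common lower bound, so $m\in M(x)\cap M(y)$ and $C_m$ is a clique. Every $x$ lies above some minimal element, because $M(x)\neq\emptyset$, so the sets $C_m$ cover $X$. Shrinking them to a partition, if a partition is required, keeps them cliques. So $\theta\le|\Ob|$, and equality follows. The same clique $C_m$ gives $\omega(\Ins(X))\ge\max_m|\cl\{m\}|$. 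For $\Cm(X)$ I will apply the $\Ins$ statement to $X^{op}$, using $\Cm(X)=\Ins(X^{op})$ as noted above; the open points of $X^{op}$ are the closed points of $X$.

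For $\Fun(X)$, Theorem~\ref{thm:hierarchy}(f) shows that the graph is a disjoint union of cliques, one per connected component. One vertex from each component gives an independent set, the components themselves give a clique cover, and both have size equal to the number of components.

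The case $\Ury(X)$ is the main point, and there I will only need $\alpha$. By Lemma~\ref{lem:Ury-open}, two distinct open points $m,m'$ are $\Ury$-adjacent iff $\cl\{m\}\cap\cl\{m'\}\neq\emptyset$, because $M(m)=\{m\}$. So a set of open points with pairwise disjoint closures is independent, which gives $\ge$. For $\le$, take an independent set $I$ and replace each $x\in I$ by some $m_x\in M(x)$. If $x\ne y$ are in $I$, then $\cl\{m_x\}\cap\cl\{m_y\}=\emptyset$, since otherwise Lemma~\ref{lem:Ury-open} would make $x\sim_{\Ury}y$. In particular $m_x\neq m_y$, so $x\mapsto m_x$ is injective and its image consists of open points with pairwise disjoint closures. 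This yields $\alpha(\Ury(X))$ equal to the stated maximum.

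The only subtle step is this injectivity and disjointness check, and it follows directly from the lemma once one notes that $m=m'$ is allowed there.
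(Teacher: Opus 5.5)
Your proof is correct and follows the paper's argument essentially step by step. You use the independent set $\Ob$ against the clique cover $\{\cl\{m\}:m\in\Ob\}$, pass to $X^{op}$ for $\Cm$, use components for $\Fun$, and choose $m_x\in M(x)$ together with Lemma~\ref{lem:Ury-open} for both inequalities in the $\Ury$ case.
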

\begin{proof}
Distinct open points $m,m'$ satisfy $M(m)=\{m\}\ne\{m'\}=M(m')$, so $\Ob$ is independent in $\Ins(X)$ and $\alpha\ge|\Ob|$. On the other hand, for $m\in\Ob$ the set $\cl\{m\}={\uparrow}m$ is a clique of $\Ins(X)$ (its elements share the lower bound $m$), and these cliques cover $X$ since every $x$ lies above an open point. Hence $\theta\le|\Ob|$. Since $\alpha\le\theta$ for every graph, all three numbers agree; the bound on $\omega$ also follows. The statement for $\Cm$ follows by applying this to $X^{op}$, and that for $\Fun$ is clear. Finally let $I$ be independent in $\Ury(X)$ and choose $m_x\in M(x)$ for $x\in I$. By Lemma~\ref{lem:Ury-open} the closures $\cl\{m_x\}$ ($x\in I$) are pairwise disjoint, in particular the $m_x$ are distinct; so $I$ gives a set of $|I|$ open points with pairwise disjoint closures. Conversely such a set of open points is independent in $\Ury(X)$ by Lemma~\ref{lem:Ury-open}, since $M(m)=\{m\}$.
\end{proof}

\begin{proposition}\label{prop:distances}
Let $X$ be a finite space and $x,y$ in the same component. Then
\[
d_{\Ins}(x,y)\le d_{\Sp}(x,y)\le 2\,d_{\Ins}(x,y),\qquad d_{\Cm}(x,y)\le d_{\Sp}(x,y)\le 2\,d_{\Cm}(x,y),\]
\[ d_{\Ury}(x,y)=\lceil d_{\Ins}(x,y)/2\rceil .
\]
In particular the connected components of $\Sp(X)$, $\Ins(X)$, $\Cm(X)$ and $\Ury(X)$ are the connected components of $X$.
\end{proposition}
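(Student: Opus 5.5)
First reduce to the $T_0$ case: by Theorem~\ref{thm:hierarchy}(e), points with the same image in $KX$ are adjacent in all five graphs and are closed twins. So for distinct points with equal image every distance is $1$ and all inequalities hold trivially. Otherwise distances in $X$ equal distances between the images in $KX$. Hence assume $X$ is a finite $T_0$ poset and use the order-theoretic descriptions given just before Lemma~\ref{lem:Ury-open}.

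\emph{The comparisons with $\Sp$.} The left inequalities $d_{\Ins}\le d_{\Sp}$ and $d_{\Cm}\le d_{\Sp}$ follow from $E(\Sp)\subseteq E(\Ins)\cap E(\Cm)$ (Theorem~\ref{thm:hierarchy}(a)). For the right inequality with $\Ins$, take a shortest $\Ins$-path $x=x_0\sim x_1\sim\dots\sim x_k=y$. For each $i$ choose a common lower bound $z_i$ of $x_{i-1},x_i$; then $x_{i-1}\eqadj{\Sp} z_i\eqadj{\Sp} x_i$, giving an $\Sp$-walk of length at most $2k$. The $\Cm$ case is the same with common upper bounds, or follows by applying the $\Ins$ case to $X^{op}$, using $\Cm(X)=\Ins(X^{op})$ and $\Sp(X^{op})=\Sp(X)$. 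Because all these distances are mutually bounded, finiteness of one implies finiteness of the others. So the components of $\Sp,\Ins,\Cm$ agree, and by Theorem~\ref{thm:hierarchy}(f) they are the components of $X$.

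\emph{The Urysohn distance.} By Theorem~\ref{thm:hierarchy}(f), $\Ury(X)=\Ins(X)\sq$, so it suffices to prove the general graph fact $d_{\Gamma\sq}(x,y)=\lceil d_\Gamma(x,y)/2\rceil$. For the upper bound, a shortest $\Gamma$-path of length $k$ can be traversed in steps of two (plus one final step of one if $k$ is odd), giving $\lceil k/2\rceil$ edges of $\Gamma\sq$. For the lower bound, each $\Gamma\sq$-edge joins points at $\Gamma$-distance at most $2$, so a $\Gamma\sq$-path of length $m$ yields $d_\Gamma\le 2m$, i.e.\ $m\ge\lceil d_\Gamma/2\rceil$. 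This also shows that $\Ury$ has the same components as $\Ins$.

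\emph{Main difficulty.} The only delicate point is making the Kolmogorov reduction and the degenerate cases airtight. When $z_i$ coincides with $x_{i-1}$ or $x_i$, the two steps collapse to one, and the relation $\eqadj{\Sp}$ absorbs this. The rest is bookkeeping.
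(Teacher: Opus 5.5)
Your proof is correct and follows the paper's argument. The left inequalities come from $E(\Sp)\subseteq E(\Ins)\cap E(\Cm)$; the factor $2$ comes from replacing each $\Ins$-edge (resp.\ $\Cm$-edge) by the two-step $\Sp$-walk through a common lower (resp.\ upper) bound; and the Urysohn formula comes from $\Ury(X)=\Ins(X)\sq$ together with the halving of distances in a graph square. Your preliminary Kolmogorov reduction is valid but unnecessary, since Theorem~\ref{thm:hierarchy}(f) already describes $\Ins$ and $\Cm$ via common lower and upper bounds for every Alexandrov space, $T_0$ or not.
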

\begin{proof}
The first inequality in each chain follows from $E(\Sp)\subseteq E(\Ins)\cap E(\Cm)$. If $u\sim_{\Ins}v$ with common lower bound $z$, then $u\ge z\le v$ is a path of length at most $2$ in $\Sp(X)$; dually for $\Cm$. The formula for $d_{\Ury}$ follows from $\Ury(X)=\Ins(X)\sq$ (Theorem~\ref{thm:hierarchy}(f)), since in the square of a graph distances are halved and rounded up. The last statement follows from the inequalities and Theorem~\ref{thm:hierarchy}(f).
\end{proof}

\section{Which graphs are inseparability graphs of finite spaces?}\label{sec:realization}

Proposition~\ref{prop:induced} shows that every finite graph is an \emph{induced subgraph} of the inseparability graph of a finite space. The question of which graphs are \emph{equal} to such a graph is more delicate, and its answer is our next main result.

\begin{definition}
A finite graph $\Gamma$ is \emph{simplicially covered} if for every edge $uv$ there is a simplicial vertex $w$ with $u,v\in N[w]$ (i.e.\ every edge lies in a clique of the form $N[w]$, $w$ simplicial). Two simplicial vertices $w,w'$ are \emph{s-equivalent} if $w\eqadj{\Gamma}w'$; we write $\sigma(\Gamma)$ for the number of s-equivalence classes.
\end{definition}

\begin{lemma}\label{lem:s-equiv}
If $w,w'$ are adjacent simplicial vertices then $N[w]=N[w']$. Consequently s-equivalence is an equivalence relation on the set of simplicial vertices, and distinct classes are non-adjacent.
\end{lemma}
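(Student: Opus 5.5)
The first claim is a direct consequence of the definition of simplicial vertices, and I will prove it by double inclusion. Let $w,w'$ be adjacent simplicial vertices. Take $z\in N[w]$. Since $N[w]$ is a clique containing both $z$ and $w'$ (as $w'\in N[w]$), we get $z\eqadj{\Gamma}w'$, so $z\in N[w']$. This gives $N[w]\subseteq N[w']$. Exchanging the roles of $w$ and $w'$, which is legitimate because $w'$ is also simplicial, gives the reverse inclusion, so $N[w]=N[w']$.

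For the equivalence relation, I check the three properties on the set of simplicial vertices. Reflexivity and symmetry hold because $\eqadj{\Gamma}$ is reflexive and symmetric by definition. For transitivity, suppose $w\eqadj{\Gamma}w'$ and $w'\eqadj{\Gamma}w''$ with all three simplicial. If any two of them are equal there is nothing to prove. Otherwise the first part gives $N[w]=N[w']$. Since $w''\in N[w']=N[w]$, we get $w\eqadj{\Gamma}w''$.

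The final assertion, that distinct classes are non-adjacent, holds by definition. If a vertex of one class were adjacent to a vertex of another, those two simplicial vertices would be s-equivalent, so the classes would coincide. I would also record the slightly stronger fact that follows from the first part: every s-equivalence class $C$ lies in the clique $N[w]$ for any $w\in C$, and all its members share that closed neighbourhood. This is presumably what later results use when counting open points via $\sigma(\Gamma)$.

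No step here is a real obstacle. The only point needing care is to use simpliciality of both vertices to get equality of the closed neighbourhoods rather than mere inclusion.
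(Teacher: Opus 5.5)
Your proposal is correct and follows essentially the same argument as the paper. The paper likewise proves $N[w]\subseteq N[w']$ from the fact that $N[w]$ is a clique containing $w'$ and then uses symmetry. It also derives transitivity from $w''\in N[w']=N[w]$, and reads the non-adjacency of distinct classes off the same observation.
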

\begin{proof}
Since $w'\in N[w]$ and $N[w]$ is a clique, every vertex of $N[w]$ is equal or adjacent to $w'$, so $N[w]\subseteq N[w']$; by symmetry equality holds. If $w\eqadj{}w'\eqadj{}w''$ are simplicial then $w''\in N[w']=N[w]$, which proves transitivity, and shows that a vertex adjacent to a member of a class lies in the common closed neighbourhood of the class.
\end{proof}

\begin{theorem}\label{thm:realization}
For a finite graph $\Gamma=(V,E)$ the following are equivalent.
\begin{enumerate}[label=\textup{(\roman*)}]
\item $\Gamma\cong\Ins(X)$ for some finite topological space $X$.
\item $\Gamma$ is simplicially covered.
\item $\Gamma$ is isomorphic to the intersection graph of a family $(A_v)_{v\in V}$ of non-empty subsets of a finite set $S$ such that every singleton $\{s\}$, $s\in S$, is a member of the family.
\item $\Gamma\cong\Ins(X)$ for a finite $T_0$ space $X$ of height at most $1$ with $|X|=|V|$.
\end{enumerate}
Moreover, if these conditions hold, then for every finite $T_0$ space $X$ and every isomorphism $\varphi\colon\Ins(X)\to\Gamma$, the set $\varphi(\Ob)$ contains exactly one vertex from each s-equivalence class and no other vertex. In particular
\[
|\Ob|=\sigma(\Gamma)=\alpha(\Gamma)=\theta(\Gamma)
\]
is an invariant of $\Gamma$, and $X$ has at least $\sigma(\Gamma)$ open points and at most $|V|-\sigma(\Gamma)$ non-open points. The same statements hold for $\Cm$ in place of $\Ins$, with open points replaced by closed points.
\end{theorem}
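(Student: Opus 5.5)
I would prove the cycle (i)$\Rightarrow$(ii)$\Rightarrow$(iii)$\Rightarrow$(iv)$\Rightarrow$(i), and then the ``moreover'' part. The implication (iv)$\Rightarrow$(i) is trivial.

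For (i)$\Rightarrow$(ii), first reduce to the $T_0$ case. By Theorem~\ref{thm:hierarchy}(e), $\Ins(X)$ is a blow-up of $\Ins(KX)$ into cliques of closed twins. A closed twin of a simplicial vertex is again simplicial and has the same closed neighbourhood, so being simplicially covered passes to the blow-up. With $X$ a finite $T_0$ space, each open point $m$ is simplicial in $\Ins(X)$. Indeed, $M(m)=\{m\}$, so $N[m]=\{y:m\in M(y)\}={\uparrow}m$, and any two points of ${\uparrow}m$ share the lower bound $m$, so $N[m]$ is a clique. Now suppose $u\sim_{\Ins}v$. Then $M(u)\cap M(v)$ contains some open point $m$, and $u,v\in N[m]$, which is what (ii) requires.

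For (ii)$\Rightarrow$(iii), let $S$ be a set of representatives of the s-equivalence classes and put $A_v=\{s\in S: v\in N[s]\}$. Each $A_v$ is non-empty. If $v$ lies on an edge, that edge is covered by some simplicial $w$, and $N[w]$ equals $N[s]$ for the representative $s$ of $w$'s class (Lemma~\ref{lem:s-equiv}). If $v$ is isolated, $v$ is itself simplicial and represents a class. For the intersection condition:
\begin{itemize}
\item If $uv\in E$, the covering vertex gives a common element of $A_u$ and $A_v$.
\item Conversely, if $s\in A_u\cap A_v$, then $u,v\in N[s]$, which is a clique, so $u=v$ or $uv\in E$.
\end{itemize}
For the singletons, $A_s=\{s\}$ for $s\in S$, because Lemma~\ref{lem:s-equiv} shows that distinct classes are non-adjacent. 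So this step is essentially bookkeeping.

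For (iii)$\Rightarrow$(iv), I would build the poset on $V$ directly. Let $v_s$ be the vertex with $A_{v_s}=\{s\}$; if several vertices share this set, pick one. Declare $v_s<v$ whenever $s\in A_v$ and $v\ne v_s$. This relation has height at most $1$: the $v_s$ are the only elements that are ever below anything, and no $v_s$ lies above another, since $A_{v_s}=\{s\}$. It is $T_0$ and lives on $V$. Each vertex with $A_v=\{s\}$ other than $v_s$ lies above $v_s$ alone. Thus $M(v)=\{v_s:s\in A_v\}$, and by the order-theoretic description of $\Ins$ in Section~\ref{sec:finite}, $u\sim_{\Ins}v$ exactly when $A_u\cap A_v\ne\emptyset$.

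For the ``moreover'' part, fix $X$ finite $T_0$ and identify $\Gamma$ with $\Ins(X)$.
\begin{itemize}
\item Open points are simplicial, as shown above, and pairwise non-adjacent, so they lie in distinct s-classes.
\item Every simplicial $w$ is s-equivalent to some open point. Indeed, pick $m\in M(w)$. Then $m\sim w$ or $m=w$, so $m$ and $w$ are in the same class.
\end{itemize}
Hence $\Ob$ meets each class exactly once, giving $|\Ob|=\sigma(\Gamma)$, and Proposition~\ref{prop:alpha} gives $\alpha=\theta=|\Ob|$. For a non-$T_0$ space the stated bounds follow from the blow-up via Theorem~\ref{thm:hierarchy}(e). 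The version for $\Cm$ follows from $\Cm(X)=\Ins(X^{op})$.

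The main obstacle is the reduction from arbitrary finite spaces to the $T_0$ case in (i)$\Rightarrow$(ii), and the corresponding care in the final count. One must check that twin blow-ups preserve simpliciality and s-classes. One must also check that in (iii)$\Rightarrow$(iv) repeated singleton sets are handled by placing the duplicates above their representative rather than identifying them.
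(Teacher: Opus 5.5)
Your proof is correct. From (ii)$\Rightarrow$(iii) onward it is the paper's argument: representatives of the s-classes, $A_v=N[v]\cap S$, the height-one poset with duplicate singletons placed above the chosen $v_s$, and the open-point/s-class argument for the ``moreover'' part.

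The one genuine difference is (i)$\Rightarrow$(ii) for non-$T_0$ spaces. You pass to $KX$ and lift through the twin blow-up. The paper never reduces: it works with \emph{basic} points $m$ (those with $U_m$ inclusion-minimal among the $U_x$) and proves $N[m]=\{y: m\in U_y\}$ directly. The basic points are exactly the points of the fibres over open points of $KX$, so the two arguments are the same computation. The paper's version, however, needs no transfer step, and its identity is reused in the ``moreover'' part.

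If you keep your route, justify the transfer properly:
\begin{itemize}
\item Use $N_{\Ins(X)}[x]=q^{-1}\bigl(N_{\Ins(KX)}[q(x)]\bigr)$. This shows that preimages of simplicial closed neighbourhoods are cliques, so points over simplicial vertices are simplicial.
\item Note that the new edges inside a fibre $q^{-1}(c)$ are also covered, because $c\in N[m]$ for any open $m\le c$.
\end{itemize}
The fact you actually cite, that a closed twin of a simplicial vertex is simplicial, is not quite the one needed.

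You should delete the sentence claiming that for a non-$T_0$ space the stated bounds follow from the blow-up. With the paper's definition of open point ($\{x\}$ open) it is false. The indiscrete two-point space has $\Ins\cong K_2$ and $\sigma(K_2)=1$, yet it has no open points and two non-open points. In general, a point whose indistinguishability class has more than one element is never open. The theorem only asserts the count for $T_0$ spaces, where it is an equality, so nothing is lost by removing the sentence.
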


\begin{proof}
\emph{(i)$\Rightarrow$(ii).} Let $X$ be a finite space with $\Ins(X)\cong\Gamma$; we identify $\Gamma$ with $\Ins(X)$. Here we do not assume $T_0$. Call a point $m$ \emph{basic} if $U_m$ is minimal (with respect to inclusion) among the sets $U_x$, $x\in X$. Every $U_z$ contains $U_m$ for some basic $m$, because $X$ is finite.

We claim that for a basic point $m$,
\begin{equation}\label{eq:Nm}
N[m]=\{y\in X: m\in U_y\}=\{y: U_m\subseteq U_y\}.
\end{equation}
Indeed, if $m\in U_y$ then $U_m\subseteq U_y$ (as $U_y$ is an open set containing $m$), so $m\in U_m\cap U_y$ and $y\eqadj{\Ins}m$. Conversely, if $y\eqadj{\Ins}m$ then there is $z\in U_m\cap U_y$; then $U_z\subseteq U_m$, and minimality of $U_m$ forces $U_z=U_m$, so $m\in U_m=U_z\subseteq U_y$. This proves~\eqref{eq:Nm}.

It follows that every basic point is simplicial in $\Ins(X)$: any two points of $N[m]$ contain $m$ in their minimal open sets and are therefore equal or adjacent. Now let $xy$ be an edge. There is $z\in U_x\cap U_y$, and a basic $m$ with $U_m\subseteq U_z$. Then $m\in U_x\cap U_y$, so $x,y\in N[m]$ by~\eqref{eq:Nm}. Thus every edge is covered by the closed neighbourhood of a simplicial vertex, and $\Gamma$ is simplicially covered.

\emph{(ii)$\Rightarrow$(iii).} Assume $\Gamma$ is simplicially covered. By Lemma~\ref{lem:s-equiv} we may choose a set $I$ of simplicial vertices containing exactly one vertex of each s-equivalence class; $I$ is independent, and every simplicial vertex $w$ has a representative $\hat w\in I$ with $N[\hat w]=N[w]$. Put $S=I$ and
\[
A_v=\{w\in I: v\in N[w]\}=N[v]\cap I\qquad(v\in V).
\]
We check the three required properties.

\emph{Non-emptiness.} If $v$ is isolated, then $N[v]=\{v\}$ is a clique, so $v$ is simplicial and its class is $\{v\}$; hence $v\in I$ and $v\in A_v$. If $v$ is not isolated, pick an edge $vu$; by hypothesis there is a simplicial $w$ with $u,v\in N[w]$, and then $v\in N[\hat w]$, i.e.\ $\hat w\in A_v$.

\emph{Singletons.} For $w\in I$ we have $A_w=N[w]\cap I=\{w\}$, since $I$ is independent. Hence each singleton $\{w\}$, $w\in S$, occurs in the family.

\emph{Intersection graph.} Let $u\ne v$. If $A_u\cap A_v\ni w$, then $u,v\in N[w]$ and $N[w]$ is a clique, so $uv\in E$. Conversely if $uv\in E$, then some simplicial $w$ has $u,v\in N[w]=N[\hat w]$, and $\hat w\in A_u\cap A_v$. Therefore $\Gamma$ is the intersection graph of $(A_v)_{v\in V}$.

\emph{(iii)$\Rightarrow$(iv).} Let $(A_v)_{v\in V}$ and $S$ be as in (iii). For each $s\in S$ choose one vertex $v_s$ with $A_{v_s}=\{s\}$; the $v_s$ are distinct. Define a relation on $V$ by
\[
v_s<v\qquad\text{whenever }v\ne v_s\text{ and } s\in A_v,
\]
and no other strict relations. This is a strict partial order of height at most $1$: a vertex of the form $v_s$ is never above anything, since $v_s>v_t$ would require $t\in A_{v_s}=\{s\}$, i.e.\ $v_t=v_s$, contradicting $v_s\ne v_t$; hence there are no chains of length $2$ and transitivity and antisymmetry hold vacuously. Let $X$ be $V$ with the corresponding Alexandrov topology (a $T_0$ space with $|X|=|V|$). Its minimal points are exactly the $v_s$: every $v_s$ is minimal, and every other vertex $v$ has $A_v\neq\emptyset$, hence lies above $v_s$ for any $s\in A_v$. For every $v$ we get $M(v)=\{v_s: s\in A_v\}$ (for $v=v_t$ this reads $M(v_t)=\{v_t\}$, consistent with $A_{v_t}=\{t\}$). Therefore, for $u\ne v$,
\[
u\sim_{\Ins(X)}v\iff M(u)\cap M(v)\neq\emptyset\iff A_u\cap A_v\ne\emptyset\iff uv\in E,
\]
using the description of $\Ins$ at the start of Section~\ref{sec:finite} and the injectivity of $s\mapsto v_s$. Hence $\Ins(X)=\Gamma$.

\emph{(iv)$\Rightarrow$(i)} is trivial. This completes the proof of the equivalence.

\emph{The ``moreover'' statement.} Let $X$ be a finite $T_0$ space and identify $\Gamma$ with $\Ins(X)$. In a $T_0$ space the basic points are exactly the open points (if $U_m$ is minimal then $U_m=\{m\}$, since $U_z\subsetneq U_m$ for every $z<m$, and conversely). By the proof of (i)$\Rightarrow$(ii), every open point is simplicial. Distinct open points are non-adjacent ($U_m\cap U_{m'}=\{m\}\cap\{m'\}=\emptyset$), so no two open points are s-equivalent. It remains to see that every s-equivalence class contains an open point. Let $w$ be simplicial and choose an open point $m\le w$. Then $m\in U_w$, so $w\in N[m]$ by~\eqref{eq:Nm}, i.e.\ $m\in N[w]$. Now $m$ and $w$ are both simplicial and equal or adjacent, so they are s-equivalent. Hence $\Ob$ is a system of representatives of the s-equivalence classes and $|\Ob|=\sigma(\Gamma)$. The equality with $\alpha(\Gamma)$ and $\theta(\Gamma)$ is Proposition~\ref{prop:alpha}. The statement about non-open points follows since $|X|=|V|$. The version for $\Cm$ follows by applying everything to $X^{op}$, since $\Cm(X)=\Ins(X^{op})$ and the open points of $X^{op}$ are the closed points of $X$.
\end{proof}

\begin{corollary}\label{cor:nonrealizable}
\begin{enumerate}[label=\textup{(\alph*)}]
\item A path $P_n$ is an inseparability graph of a finite space iff $n\le3$; a cycle $C_n$ is one iff $n=3$.
\item A tree is an inseparability graph of a finite space iff its diameter is at most $2$ \textup{(}i.e.\ it is a star\textup{)}.
\item If $\Gamma$ is the inseparability graph of a finite space then $\alpha(\Gamma)=\theta(\Gamma)$. In particular no graph with $\alpha<\theta$ \textup{(}such as $C_5$\textup{)} is an inseparability graph.
\item A finite graph $\Gamma$ is the Urysohn graph of a finite space iff $\Gamma=\Delta\sq$ for a simplicially covered graph $\Delta$ on the same vertex set.
\end{enumerate}
\end{corollary}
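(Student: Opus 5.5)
Each part reduces to the characterization in Theorem~\ref{thm:realization}, so the plan is to check simplicial coverage (or $\alpha=\theta$) directly. For (a), $P_1,P_2,P_3$ and $C_3=K_3$ are simplicially covered: in $P_3$ both endpoints are simplicial, and the closed neighbourhood of an endpoint contains the edge at that endpoint. In $P_n$ with $n\ge4$, the only simplicial vertices are the two endpoints. An interior edge $v_2v_3$ (which exists since $n\ge4$) lies in neither $N[v_1]=\{v_1,v_2\}$ nor $N[v_n]$. For $C_n$ with $n\ge4$, no vertex is simplicial, because the two neighbours of any vertex are non-adjacent; since $C_n$ has edges, it is not simplicially covered.

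For (b), a star $K_{1,r}$ is simplicially covered, since every edge contains a leaf $w$ and equals $N[w]$; the cases $K_1$ and $K_2$ are trivial. Conversely, in a tree the simplicial vertices are exactly the leaves, together with an isolated vertex in $K_1$. The closed neighbourhood of a leaf is its unique edge, so every edge would have to contain a leaf. If the diameter is at least $3$, a longest path $v_0v_1v_2v_3\dots$ gives the edge $v_1v_2$, and neither endpoint is a leaf. Indeed, $v_1$ and $v_2$ both have degree at least $2$, being interior vertices of the path. This contradiction proves (b).

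Part (c) is just the ``moreover'' clause of Theorem~\ref{thm:realization}. For $C_5$ we have $\alpha=2$ and $\theta=3$.

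For (d), suppose first that $\Gamma=\Ury(X)$ for a finite space $X$. Theorem~\ref{thm:hierarchy}(f) gives $\Ury(X)=\Ins(X)\sq$. Set $\Delta=\Ins(X)$, transported along the identification with $\Gamma$; it has the same vertex set and is simplicially covered by (i)$\Rightarrow$(ii). Conversely, suppose $\Delta$ is simplicially covered. Then (ii)$\Rightarrow$(iv) gives a finite space $X$ on the vertex set of $\Delta$ with $\Ins(X)=\Delta$ as labelled graphs. This is the key point, and it holds because the construction in (iii)$\Rightarrow$(iv) uses $V$ itself as the point set. Then $\Ury(X)=\Delta\sq=\Gamma$.

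The only place requiring care is this labelled-versus-isomorphic bookkeeping in (d). The remaining steps are the small checks in (a) and (b): identifying exactly which vertices are simplicial, and noting that $P_n$ with $n\ge4$ has a non-pendant edge.
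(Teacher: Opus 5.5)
Your proof is correct and follows the paper's route. All four parts reduce to Theorem~\ref{thm:realization}: the leaf argument for trees (you treat paths directly rather than as a special case of trees, which is immaterial), the absence of simplicial vertices in $C_n$ for $n\ge4$, and $\Ury(X)=\Ins(X)\sq$ for (d), where your observation that (ii)$\Rightarrow$(iv) yields a labelled realization on $V$ itself is exactly what the converse needs. One small point in (c): the ``moreover'' clause is stated for $T_0$ realizations, so you should first pass to the $T_0$ space supplied by (i)$\Rightarrow$(iv), a step the paper's one-line citation also leaves implicit.
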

\begin{proof}
(a),(b) In a tree the simplicial vertices are the leaves (and the vertices of a $K_1$ or $K_2$), and $N[\ell]=\{\ell,p\}$ for a leaf $\ell$ with neighbour $p$. So a tree is simplicially covered iff every edge contains a leaf, i.e.\ iff it has no edge between two non-leaves, i.e.\ iff it has diameter at most $2$. This gives (b) and the statement for paths. A cycle $C_n$ with $n\ge4$ has no simplicial vertex, while $C_3=K_3$ is realized by a space with a minimum. (c) follows from Theorem~\ref{thm:realization}. (d) follows from Theorem~\ref{thm:realization} and $\Ury(X)=\Ins(X)\sq$.
\end{proof}

\begin{figure}[ht]
\centering
\begin{tikzpicture}[scale=0.95,pt/.style={circle,fill,inner sep=1.5pt},sp/.style={circle,draw,fill=white,inner sep=1.7pt}]
\begin{scope}
\node[pt,label=left:$a$] (a) at (0,1) {}; \node[pt,label=right:$b$] (b) at (1.6,1) {};
\node[sp,label=below:$c$] (c) at (0.8,0) {};
\node[sp,label=above:$d$] (d) at (0,2.1) {}; \node[sp,label=above:$e$] (e) at (1.6,2.1) {};
\draw (a)--(b)--(c)--(a) (d)--(a) (e)--(b);
\node[font=\small] at (0.8,-0.9) {bull graph $\Gamma$};
\end{scope}
\begin{scope}[xshift=4.4cm]
\node[sp,label=below:$d$] (d) at (0,0) {}; \node[sp,label=below:$c$] (c) at (1.2,0) {}; \node[sp,label=below:$e$] (e) at (2.4,0) {};
\node[pt,label=above:$a$] (a) at (0.6,1.4) {}; \node[pt,label=above:$b$] (b) at (1.8,1.4) {};
\draw (d)--(a)--(c)--(b)--(e);
\node[font=\small,text width=3.4cm,align=center] at (1.2,-1.1) {Hasse diagram of $X$, $\Ins(X)=\Gamma$};
\end{scope}
\begin{scope}[xshift=9.6cm]
\node[pt,label=above:$1$] (1) at (0,1.5) {}; \node[pt,label=above:$2$] (2) at (1,1.5) {};
\node[pt,label=above:$3$] (3) at (2,1.5) {}; \node[pt,label=above:$4$] (4) at (3,1.5) {};
\draw (1)--(2)--(3)--(4);
\draw[red,thick] (1.3,1.2)--(1.7,1.8) (1.3,1.8)--(1.7,1.2);
\node[text width=3.6cm,align=center,font=\small] at (1.5,0.2) {$P_4$: the edge $23$ is in no $N[w]$ with $w$ simplicial};
\end{scope}
\end{tikzpicture}
\caption{Theorem~\ref{thm:realization} in action. In the bull graph the simplicial vertices (hollow) are $c,d,e$ and their closed neighbourhoods cover all edges, so the bull is realized by the $5$-point space in the middle, whose open points are exactly $c,d,e$. Deleting $c$ yields $P_4$, which is not the inseparability graph of any finite space although it is an induced subgraph of one.}
\label{fig:bull}
\end{figure}

\begin{example}[Inseparability does not determine homotopy type]\label{ex:same-Ins}
Let $\Gamma=K_4-e$ with vertices $a,b,c,d$ and non-edge $bd$. The simplicial vertices are $b$ and $d$, and $N[b]=\{a,b,c\}$, $N[d]=\{a,c,d\}$ cover all edges. By Theorem~\ref{thm:realization} every $T_0$ realization has open points $b,d$, and $a,c$ lie above both. There are exactly two such posets: the pseudo-circle $S$ of Figure~\ref{fig:pseudocircle} ($a,c$ incomparable), which is weakly equivalent to $S^1$, and the space $T$ in which moreover $a<c$, which has a maximum and is contractible. Both have $\Ins\cong K_4-e$. In contrast with Corollary~\ref{cor:Sp-weak}, the inseparability graph does not determine the weak homotopy type. (Note $\Sp(S)=C_4$ while $\Sp(T)$ is $K_4-e$ as well.)
\end{example}

\begin{proposition}\label{prop:holes}
Let $X$ be a finite space and $k\ge 4$. If $\Ins(X)$ contains an induced cycle of length $k$, then $|X|\ge 2k$, and this bound is attained. Consequently the smallest finite spaces with non-chordal inseparability graph have $8$ points, and the smallest finite spaces with non-perfect inseparability graph have $10$ points.
\end{proposition}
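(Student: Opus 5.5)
The plan is to reduce to the $T_0$ case, then show that an induced $k$-cycle forces $k$ additional open points outside the cycle, and finally give a construction attaining $2k$.

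\emph{Reduction to $T_0$.} By Theorem~\ref{thm:hierarchy}(e), $\Ins(X)$ is obtained from $\Ins(KX)$ by blowing up vertices into cliques of closed twins. An induced cycle of length $k\ge4$ cannot contain two closed twins. Such twins would be adjacent, hence consecutive on the cycle, and would have the same closed neighbourhood. Then the other cycle-neighbour of one of them would also be adjacent to the other twin, which is a chord. So $q$ maps the cycle injectively onto an induced $k$-cycle of $\Ins(KX)$. Since $|X|\ge|KX|$, we may assume $X$ is $T_0$.

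\emph{The lower bound.} Let $x_1,\dots,x_k$ (indices mod $k$) be the induced cycle. By the description of $\Ins$ at the start of Section~\ref{sec:finite}, each edge $x_ix_{i+1}$ has a witness $m_i\in M(x_i)\cap M(x_{i+1})\subseteq\Ob$. By the proof of Theorem~\ref{thm:realization}, every open point $m$ is simplicial and $N[m]=\{y:m\le y\}$. The first step is to show that no $m_i$ is a cycle vertex. Each $x_j$ has two non-adjacent neighbours $x_{j\pm1}$ (because $k\ge4$), so $x_j$ is not simplicial, whereas $m_i$ is. The second step is to show that the $m_i$ are pairwise distinct. If $m_i=m_j$ with $i\ne j$, then the clique $N[m_i]$ contains $\{x_i,x_{i+1},x_j,x_{j+1}\}$. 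This set has at least three distinct cycle vertices, and since $k\ge4$ two of them are non-consecutive, giving a chord. Hence $|X|\ge 2k$.

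\emph{Attaining the bound.} Take the height-$1$ poset on $\{x_i\}\sqcup\{m_i\}$ with $m_i<x_i$ and $m_i<x_{i+1}$. Here $M(x_i)=\{m_{i-1},m_i\}$ and $M(m_i)=\{m_i\}$, so on the $x$'s the graph $\Ins$ restricts exactly to the cycle $C_k$, which is therefore induced.

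\emph{Consequences.} A non-chordal graph has an induced cycle with $k\ge4$, so at least $8$ points are needed, and $8$ are achieved by the construction with $k=4$. By the Strong Perfect Graph Theorem~\cite{SPGT}, a non-perfect graph contains either an odd hole with $k\ge5$, which needs at least $10$ points and is attained with $10$ by the construction with $k=5$, or an odd antihole $\overline{C}_k$ with $k\ge7$. The antihole case is the main obstacle, and I would argue it as follows.

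In $\overline{C}_k$ every vertex has two non-adjacent neighbours, so it is not simplicial. Therefore the open points covering its edges lie outside the antihole. Each such open point $m$ covers only the edges inside the clique $N[m]\cap\overline{C}_k$. Cliques of $\overline{C}_k$ have at most $\lfloor k/2\rfloor$ vertices. For $k=7$ this means at most $3$ vertices and so at most $3$ edges per open point, while $\overline{C}_7$ has $14$ edges. Hence at least $5$ open points are needed, giving $|X|\ge12>10$. For larger $k$ the same counting argument applies: a clique of size at most $\lfloor k/2\rfloor$ covers at most $\binom{\lfloor k/2\rfloor}{2}$ edges, and I would check that this still forces at least $10-k$ extra points; for $k\ge10$ the bound is immediate anyway since $|X|\ge k$. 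So the minimum for non-perfect inseparability graphs is $10$.
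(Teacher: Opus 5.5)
Your proof is correct and follows essentially the paper's route: the twin reduction to the $T_0$ case, $k$ distinct open witnesses $m_i\in M(x_i)\cap M(x_{i+1})$ lying off the cycle, the same height-$1$ extremal example, and an edge-covering count for odd antiholes. The only difference is the antihole step: the paper obtains at least $2(2l+1)/l>4$ open points uniformly, whereas your case split also works once you note that for $k=9$ a single open witness outside the antihole (which must exist, since antihole vertices are not simplicial) already gives $|X|\ge 10$.
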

\begin{proof}
By Theorem~\ref{thm:hierarchy}(e), $\Ins(X)$ is obtained from $\Ins(KX)$ by blowing up vertices into cliques of closed twins. Two vertices of an induced cycle of length $k\ge4$ cannot be closed twins (a common neighbour on the cycle would create a triangle), so the cycle maps to an induced cycle of length $k$ in $\Ins(KX)$, and $|KX|\le|X|$. Hence we may assume that $X$ is $T_0$. Let $x_1,\dots,x_k$ (indices mod $k$) induce a cycle. No $x_i$ is open: otherwise $M(x_i)=\{x_i\}$, and $x_{i-1},x_{i+1}$, both adjacent to $x_i$, would contain $x_i$ in $M(\cdot)$ and hence be adjacent, a chord since $k\ge4$. For each $i$ choose $m_i\in M(x_i)\cap M(x_{i+1})$. If $m_i=m_j$ with $i\ne j$, then $\{x_i,x_{i+1},x_j,x_{j+1}\}$ has at least three distinct elements, all containing $m_i$ in their $M$-sets and hence pairwise adjacent; but an induced cycle of length $\ge4$ contains no triangle. So $m_1,\dots,m_k$ are $k$ distinct open points, and together with the $k$ non-open $x_i$ we get $|X|\ge2k$. The height-$1$ space with open points $m_1,\dots,m_k$ and closed points $x_1,\dots,x_k$, where $x_i$ lies above $m_{i-1}$ and $m_i$, attains the bound, as its inseparability graph is $C_k$ on the $x_i$ plus the pendant vertices $m_i$ adjacent to $x_i,x_{i+1}$.

For perfectness, by the Strong Perfect Graph Theorem a non-perfect graph contains an odd hole $C_{2l+1}$ ($l\ge2$) or an odd antihole $\overline{C_{2l+1}}$ ($l\ge3$; note $\overline{C_5}\cong C_5$). An odd hole forces $|X|\ge 10$ by the first part, with equality for $C_5$. Suppose $\Ins(X)$ contains an induced $\overline{C_{n}}$, $n=2l+1\ge7$. No vertex of it is open, since the neighbourhood of an open point is a clique while in $\overline{C_n}$ each vertex has two non-adjacent neighbours. Each open point $m$ covers, via the clique ${\uparrow}m$, the edges of a clique of $\overline{C_n}$, i.e.\ of an independent set of $C_n$, which has at most $l$ vertices, and every edge of the antihole must be covered in this way (a common lower bound can be replaced by an open point below it). The antihole has $n(n-3)/2$ edges and a clique of size at most $l$ covers at most $\binom l2$ of them, so there are at least $\frac{n(n-3)/2}{\binom l2}=\frac{2(2l+1)}{l}>4$ open points, i.e.\ at least $5$. These open points are distinct from the $n$ vertices of the antihole, so $|X|\ge n+5\ge12$. Hence $10$ is the minimum.
\end{proof}

\section{Equality in the hierarchy for finite spaces}\label{sec:equality}

In the theory of graphs on groups, equality of two graphs in the hierarchy characterizes important classes of groups~\cite{Cam-groups,ACNS}. For finite spaces the corresponding classes are described by irreducibility of components and by chains. By Theorem~\ref{thm:hierarchy}(e) the graphs of $X$ are twin blow-ups of those of $KX$, and a blow-up of two graphs in the same way preserves equality or inequality; so it suffices to treat $T_0$ spaces. We say a finite space is a \emph{chain} if its specialization preorder is total.

\begin{theorem}\label{thm:equality}
Let $X$ be a finite $T_0$ space. Then:
\begin{enumerate}[label=\textup{(\arabic*)}]
\item $\Sp(X)=\Ins(X)$ iff $\cl\{x\}$ is a chain for every $x\in X$.
\item $\Sp(X)=\Cm(X)$ iff $U_x$ is a chain for every $x\in X$.
\item The following are equivalent: \textup{(a)} $\Ins(X)=\Ury(X)$; \textup{(b)} $\Ins(X)=\Fun(X)$; \textup{(c)} $|M(x)|=1$ for every $x$; \textup{(d)} every connected component of $X$ contains exactly one open point; \textup{(e)} every connected component of $X$ is irreducible.
\item The following are equivalent: \textup{(a)} $\Cm(X)=\Ury(X)$; \textup{(b)} $\Cm(X)=\Fun(X)$; \textup{(c)} $|M^*(x)|=1$ for every $x$; \textup{(d)} every connected component of $X$ contains exactly one closed point.
\item $\Ins(X)=\Cm(X)$ iff every connected component of $X$ contains exactly one open point and exactly one closed point; in this case $\Ins(X)=\Cm(X)=\Ury(X)=\Fun(X)$.
\item $\Ury(X)=\Fun(X)$ iff any two open points in the same component have intersecting closures, iff every connected component of $\Ins(X)$ has diameter at most $2$.
\item The following are equivalent: \textup{(a)} $\Sp(X)=\Fun(X)$; \textup{(b)} $\Sp(X)=\Ury(X)$; \textup{(c)} $\Sp(X)=\Ins(X)$ and $\Sp(X)=\Cm(X)$; \textup{(d)} every connected component of $X$ is a chain.
\end{enumerate}
\end{theorem}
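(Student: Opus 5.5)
The plan is to work throughout with the order-theoretic descriptions at the start of Section~\ref{sec:finite}. Adjacency in $\Ins$ means a common lower bound (equivalently $M(x)\cap M(y)\ne\emptyset$), and adjacency in $\Cm$ means a common upper bound. $\Ury=\Ins\sq$, and $\Fun$ is ``same component'' (Theorem~\ref{thm:hierarchy}(f)). Since $\Cm(X)=\Ins(X^{op})$ and $U_x$ in $X$ is $\cl\{x\}$ in $X^{op}$, item (2) follows from (1), and item (4) follows from the corresponding part of (3). This duality is unavailable only for $\Ury$; for (4)(a) I will argue directly via Lemma~\ref{lem:Ury-open}.

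\textbf{Item (1).} If some $\cl\{x\}={\uparrow}x$ contains incomparable $y,z$, then $x$ is a common lower bound, so $y\sim_{\Ins}z$ but $y\not\sim_{\Sp}z$. Conversely, if all up-sets ${\uparrow}x$ are chains, two points with a common lower bound $z$ lie in ${\uparrow}z$ and are therefore comparable.

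\textbf{Item (3).} I prove (c)$\Leftrightarrow$(d)$\Leftrightarrow$(e) directly. In a connected component with a unique open point $m$, every point lies above $m$, so $m$ is a minimum, which gives irreducibility. Conversely, a minimum of a component is its only minimal point. For (c)$\Rightarrow$(d): when $|M(x)|=1$ for all $x$, the $\Ins$-adjacent points share their unique open point. Hence the unique open point is constant along $\Ins$-paths, and therefore constant on components by Proposition~\ref{prop:distances}. Then (e)$\Rightarrow$(b): in an irreducible component all points share the minimum, so the component is an $\Ins$-clique, while different components are $\Fun$-nonadjacent. Also (b)$\Rightarrow$(a) holds because $\Ins\subseteq\Ury\subseteq\Fun$. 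The key step is (a)$\Rightarrow$(c). Suppose $x$ has two open points $m\ne m'$ below it. Then $m\sim_{\Ins}x\sim_{\Ins}m'$, but $m\not\sim_{\Ins}m'$ because $U_m\cap U_{m'}=\emptyset$. So $m\sim_{\Ury}m'$ by Theorem~\ref{thm:hierarchy}(d), and $\Ins\ne\Ury$.

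\textbf{Item (4).} For (a)$\Rightarrow$(c) I use the dual argument: if $x$ lies below two closed points $c\ne c'$, then $c,c'$ are $\Cm$-nonadjacent, yet $c\sim_{\Ury}c'$, since $c,c'$ are both $\Ins$-adjacent to any point of $M(x)$ below them.

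\textbf{Item (5).} Since $\Ins$ and $\Cm$ both lie in $\Ury\subseteq\Fun$, I argue as follows. If $\Ins=\Cm$, consider $x$ with two open points $m,m'$ below it. Then $m$ and $m'$ have the common upper bound $x$, so they are $\Cm$-adjacent but not $\Ins$-adjacent. Hence (3)(c) holds, and dually (4)(c) holds. The converse follows because both graphs then equal $\Fun$ by (3) and (4).

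\textbf{Item (6).} $\Ury=\Ins\sq$ equals $\Fun$ iff each $\Ins$-component, which is a component of $X$, has diameter at most $2$. The equivalence with the open-point condition uses Lemma~\ref{lem:Ury-open}. Every point has some open point below it, so if all open points in a component have pairwise meeting closures, every pair of points of that component is $\Ury$-adjacent. Conversely, open points $m,m'$ in the same component are $\Ury$-adjacent iff $\cl\{m\}\cap\cl\{m'\}\ne\emptyset$, because $M(m)=\{m\}$.

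\textbf{Item (7).} For (d)$\Rightarrow$(a), a chain component is an $\Sp$-clique. Then (a)$\Rightarrow$(b)$\Rightarrow$(c) follows by sandwiching, since $\Sp\subseteq\Ins,\Cm\subseteq\Ury\subseteq\Fun$. The step I expect to be the main obstacle is (c)$\Rightarrow$(d). By (1) and (2), every ${\uparrow}x$ and every ${\downarrow}x$ is a chain. Then each component has a unique minimal point: if $m,m'$ are minimal and joined by a zig-zag, take the first place the zig-zag turns. There a point either has two incomparable elements below it or two above it, unless the zig-zag collapses. I would formalize this by choosing a shortest $\Sp$-path between distinct minimal points and deriving a contradiction from the chain conditions. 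So each component has a minimum $m$, and the whole component equals ${\uparrow}m$, which is a chain.
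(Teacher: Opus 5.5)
Your proposal is correct and follows essentially the same route as the paper: the order-theoretic descriptions, duality via $X^{op}$ for (2) and (4) with a direct argument for the non-dual Urysohn step, the two-open-points obstruction for (3)(a)$\Rightarrow$(c) and for (5), Lemma~\ref{lem:Ury-open} for (6), and a shortest $\Sp$-path argument under the chain conditions for (7)(c)$\Rightarrow$(d). The differences are only organizational. In (7) you close the cycle (d)$\Rightarrow$(a)$\Rightarrow$(b)$\Rightarrow$(c)$\Rightarrow$(d) by sandwiching alone, so the paper's separate (b)$\Rightarrow$(d) detour through part (3) is not needed; and you run the geodesic argument between two distinct minimal points, which at that stage uses only that each ${\downarrow}x$ is a chain, before using that ${\uparrow}m$ is a chain to conclude.
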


\begin{proof}
We use the order-theoretic descriptions at the beginning of Section~\ref{sec:finite}, Lemma~\ref{lem:Ury-open}, and the fact that connected components of $X$ are the classes of the equivalence relation $\approx$ generated by $\le$ (Theorem~\ref{thm:hierarchy}(f)). Recall that $\cl\{x\}={\uparrow}x$ and $U_x={\downarrow}x$.

\emph{(1)} Suppose every ${\uparrow}z$ is a chain. If $x\sim_{\Ins}y$ then $x,y$ have a common lower bound $z$, so $x,y\in{\uparrow}z$ are comparable; thus $E(\Ins)\subseteq E(\Sp)$, and the reverse inclusion holds always. Conversely, suppose $\Sp(X)=\Ins(X)$ and let $y\ne y'$ lie in ${\uparrow}x$. Then $x$ is a common lower bound, so $y\sim_{\Ins}y'$, hence $y\sim_{\Sp}y'$: $y$ and $y'$ are comparable. So ${\uparrow}x$ is a chain.

\emph{(2)} Apply (1) to $X^{op}$, using $\Sp(X^{op})=\Sp(X)$ and $\Cm(X)=\Ins(X^{op})$.

\emph{(3)} We prove (b)$\Rightarrow$(a)$\Rightarrow$(c)$\Rightarrow$(d)$\Rightarrow$(b) and (d)$\Leftrightarrow$(e).

(b)$\Rightarrow$(a): $E(\Ins)\subseteq E(\Ury)\subseteq E(\Fun)$, so equality of the outer two forces equality with the middle one.

(a)$\Rightarrow$(c): Suppose $m\neq m'$ both lie in $M(x)$. Then $x$ is not open (for an open point $M(x)=\{x\}$), so $m,m'<x$. Since $m\in U_m\cap U_x$ and $m'\in U_x\cap U_{m'}$, we have $m\sim_{\Ins}x\sim_{\Ins}m'$, so $d_{\Ins}(m,m')\le2$ and $m\sim_{\Ury}m'$. But $U_m\cap U_{m'}=\{m\}\cap\{m'\}=\emptyset$, so $m\not\sim_{\Ins}m'$, contradicting (a). Hence $|M(x)|\le1$, and $|M(x)|\ge1$ always.

(c)$\Rightarrow$(d): Write $M(x)=\{\mu(x)\}$. If $x\le y$ then $\mu(x)\le x\le y$, so $\mu(x)\in M(y)$ and $\mu(y)=\mu(x)$. Hence $\mu$ is constant on the classes of the equivalence relation generated by $\le$, that is, on connected components. Let $C$ be a component and $m_C$ the common value of $\mu$ on $C$. Every open point $m\in C$ satisfies $m=\mu(m)=m_C$, so $C$ contains exactly one open point.

(d)$\Rightarrow$(b): Let $C$ be a component with unique open point $m$. For $x\in C$, $U_x\subseteq C$ (components are open) and $U_x$ contains an open point, which must be $m$. Hence $m\le x$ for all $x\in C$, so any two points of $C$ have the common lower bound $m$ and are $\Ins$-adjacent. Points in different components are not $\Fun$-adjacent. Therefore $\Ins(X)=\Fun(X)$.

(d)$\Leftrightarrow$(e): A component $C$ is itself a finite $T_0$ space (it is open and closed). By the proof of (d)$\Rightarrow$(b), if $C$ has a unique open point $m$ then $m$ is the minimum of $C$, i.e.\ $\cl\{m\}={\uparrow}m=C$ and $m$ is a generic point, so $C$ is irreducible. Conversely if $C=\cl\{g\}$ then $g\le x$ for all $x\in C$, so $g$ is the minimum and the only minimal element of $C$, i.e.\ the only open point in $C$.

\emph{(4)} Note that $X$ and $X^{op}$ have the same connected components, so $\Fun(X^{op})=\Fun(X)$, while $\Cm(X)=\Ins(X^{op})$, the closed points of $X$ are the open points of $X^{op}$, and $M^*(x)$ computed in $X$ is $M(x)$ computed in $X^{op}$. (The Urysohn graph, however, is not invariant under passing to $X^{op}$, so we argue directly for (a).) (b)$\Rightarrow$(a) follows from $E(\Cm)\subseteq E(\Ury)\subseteq E(\Fun)$. (a)$\Rightarrow$(c): if $M\ne M'$ lie in $M^*(x)$ then $x\le M,M'$, so $x$ is a common lower bound and $M\sim_{\Ins}M'$, hence $M\sim_{\Ury}M'$; but $\cl\{M\}\cap\cl\{M'\}=\{M\}\cap\{M'\}=\emptyset$, so $M\not\sim_{\Cm}M'$, contradicting (a). (c)$\Rightarrow$(d) and (d)$\Rightarrow$(b) are the implications (3)(c)$\Rightarrow$(3)(d) and (3)(d)$\Rightarrow$(3)(b) applied to $X^{op}$, whose proofs used only $\Ins$, $\Fun$, components and the sets $M(x)$.

\emph{(5)} If every component $C$ has a unique open point and a unique closed point, then by (3) and (4) both $\Ins(X)$ and $\Cm(X)$ equal $\Fun(X)$, hence each other, and $\Ury(X)$ is sandwiched between them. Conversely, suppose $\Ins(X)=\Cm(X)$. If $m\ne m'$ lie in $M(x)$ then $x$ is a common upper bound of $m,m'$, so $m\sim_{\Cm}m'$ and therefore $m\sim_{\Ins}m'$, which is impossible for distinct open points. Hence $|M(x)|=1$ for all $x$, and by (3) each component has exactly one open point. Dually, if $M\neq M'$ lie in $M^*(x)$ then $M\sim_{\Ins}M'$ (common lower bound $x$), so $M\sim_{\Cm}M'$, impossible for distinct closed points; by (4) each component has exactly one closed point.

\emph{(6)} Suppose $\Ury(X)=\Fun(X)$ and let $m\ne m'$ be open points in the same component. Then $m\sim_{\Fun}m'$, so $m\sim_{\Ury}m'$, and by Lemma~\ref{lem:Ury-open} (with $M(m)=\{m\}$, $M(m')=\{m'\}$) the closures of $m$ and $m'$ meet. Conversely, assume the condition and let $x\ne y$ lie in a component $C$. Choose $m\in M(x)$ and $m'\in M(y)$; both lie in $C$ since $C$ is open. Either $m=m'$ or by hypothesis $\cl\{m\}\cap\cl\{m'\}\neq\emptyset$; in both cases Lemma~\ref{lem:Ury-open} gives $x\sim_{\Ury}y$. So $\Ury(X)$ is complete on each component and equals $\Fun(X)$. The last equivalence holds because $\Ury(X)=\Ins(X)\sq$ and the components of $\Ins(X)$ are those of $X$ (Proposition~\ref{prop:distances}): $\Ins(X)\sq$ is complete on each component iff each component of $\Ins(X)$ has diameter at most $2$.

\emph{(7)} (d)$\Rightarrow$(a),(b),(c): if each component is a chain, $\Sp(X)$ is complete on each component, so $\Sp(X)=\Fun(X)$, and all intermediate graphs coincide with it. (a)$\Rightarrow$(d) is clear: $\Sp$ complete on a component means that the component is totally ordered. (b)$\Rightarrow$(d): if $\Sp(X)=\Ury(X)$ then, as $E(\Sp)\subseteq E(\Ins)\subseteq E(\Ury)$, also $\Ins(X)=\Ury(X)$; by (3) $\Ins(X)=\Fun(X)$, whence $\Sp(X)=\Fun(X)$ and (d) follows from (a)$\Rightarrow$(d).

(c)$\Rightarrow$(d): By (1) and (2) the sets ${\uparrow}x$ and ${\downarrow}x$ are chains for every $x$. Suppose some component $C$ contains two incomparable points, and choose incomparable $x,y\in C$ with $d_{\Sp}(x,y)=k$ minimal; $k\ge2$ since they are incomparable, and $k<\infty$ by Proposition~\ref{prop:distances}. Let $x=z_0,z_1,z_2,\dots,z_k=y$ be a geodesic in $\Sp(X)$. Since it is a geodesic, $z_0$ and $z_2$ are not adjacent in $\Sp(X)$, i.e.\ they are incomparable. Consider the four possible configurations of $z_0,z_1,z_2$. If $z_0<z_1>z_2$, then $z_0,z_2\in{\downarrow}z_1$, a chain, so they are comparable. If $z_0>z_1<z_2$, then $z_0,z_2\in{\uparrow}z_1$, a chain, contradiction. If $z_0<z_1<z_2$ or $z_0>z_1>z_2$, transitivity makes $z_0,z_2$ comparable. All cases are contradictory, so every component is a chain.
\end{proof}

\begin{remark}
Part (3) is the topological analogue of the characterization of EPPO groups by the equality of the power graph and the enhanced power graph~\cite[Proposition~3.2]{Cam-groups}. A notable feature, with no counterpart for groups, is the ``collapse'' (3)(a)$\Rightarrow$(3)(b): equality of $\Ins$ with the next graph up forces equality with the top of the hierarchy. The conditions are reminiscent of algebraic geometry: the prime spectrum of an integral domain has a generic point (it is irreducible), that of a local ring has a unique closed point, and condition (5) asks for both on each component.
\end{remark}

\subsection{Cographs and chordality}\label{subsec:cograph}

Recall that a graph is a \emph{cograph} if it has no induced path $P_4$, and \emph{chordal} if it has no induced cycle of length at least $4$; a graph which is both is called \emph{trivially perfect}. For finite groups, Bubboloni, Fumagalli and Praeger~\cite{BFP} recently proved that if the enhanced power graph of a finite group is a cograph, then it is also chordal; their analysis is based on simplicial vertices. Since the closure-meeting graph $\Cm(X)$ is our analogue of the enhanced power graph (Table~\ref{tab:dictionary}), it is natural to ask whether the same phenomenon occurs for spaces. Proposition~\ref{prop:cograph}(b),(c) below shows that it does, for $\Cm(X)$ for every space and for $\Ins(X)$ for every Alexandrov space. We use two elementary facts.

\begin{enumerate}[label=(F\arabic*),leftmargin=3em]
\item\label{F1} For $k\ge5$ the cycle $C_k$ contains an induced $P_4$. Hence a cograph is chordal if and only if it has no induced $C_4$.
\item\label{F2} An induced $P_4$ or an induced cycle of length at least $4$ contains no two closed twins. Consequently, if $\Delta$ is obtained from a graph $\Delta_0$ by replacing vertices by cliques of closed twins, then $\Delta$ is a cograph \textup{(}respectively chordal\textup{)} if and only if $\Delta_0$ is. By Theorem~\ref{thm:hierarchy}\textup{(e)}, for each of the five graph types $\Gamma$ the graph $\Gamma(X)$ is a cograph \textup{(}respectively chordal\textup{)} if and only if $\Gamma(KX)$ is.
\end{enumerate}

For \ref{F2}: if $x,y$ are closed twins lying on an induced $P_4$ or on an induced cycle of length $\ge4$, then $x\sim y$. Since the subgraph is connected and has at least four vertices, some further vertex $z$ of it is adjacent to $x$ or to $y$, hence to both, and $x,y,z$ form a triangle, which is impossible. Hence such a subgraph of $\Delta$ maps injectively onto an induced copy in $\Delta_0$, while $\Delta_0$ is an induced subgraph of $\Delta$.

\begin{proposition}\label{prop:cograph}
Let $X$ be a topological space.
\begin{enumerate}[label=\textup{(\alph*)}]
\item If $\Sp(X)=\Ins(X)$ or $\Sp(X)=\Cm(X)$, then $\Sp(X)$ is a cograph and is chordal \textup{(}that is, trivially perfect\textup{)}.
\item If $\Cm(X)$ is a cograph, then $\Cm(X)$ is chordal.
\item If $X$ is an Alexandrov space \textup{(}in particular, if $X$ is finite\textup{)} and $\Ins(X)$ is a cograph, then $\Ins(X)$ is chordal.
\end{enumerate}
\end{proposition}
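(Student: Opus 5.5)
The plan is to handle (a) by an order-theoretic argument on the specialization preorder. For (b) and (c) I will use a single ``witness point'' argument: an induced $C_4$ is turned into an induced $P_4$, and then \ref{F1} applies. By \ref{F2} and Theorem~\ref{thm:hierarchy}(e), in every part I may replace $X$ by $KX$, so I assume throughout that $X$ is $T_0$ and that the specialization preorder $\le$ is a partial order.

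\emph{(a)} Suppose $\Sp(X)=\Ins(X)$. If $z\le x$ and $z\le y$, then $z$ lies in every open neighbourhood of $x$ and of $y$, so $x\eqadj{\Ins}y$, hence $x\eqadj{\Sp}y$. Thus every up-set ${\uparrow}z$ is a chain. Now take an induced path $a\sim b\sim c$ in $\Sp(X)$ with $a\not\sim c$. Then $b$ cannot lie below both $a$ and $c$, and the mixed case $a<b<c$ (or its reverse) would make $a,c$ comparable by transitivity. Hence $b>a$ and $b>c$: the middle vertex of every induced $P_3$ is strictly above both ends.
\begin{itemize}
\item For an induced $P_4$ $a\sim b\sim c\sim d$, this gives $b>c$ (from the triple $a,b,c$) and $c>b$ (from $b,c,d$), which contradicts antisymmetry.
\item For an induced $C_4$, every vertex would be strictly above both of its neighbours, which is again impossible.
\end{itemize}
So $\Sp(X)$ is a cograph and is chordal. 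The case $\Sp(X)=\Cm(X)$ is dual. A common upper bound $z$ of $x,y$ puts $z\in\cl\{x\}\cap\cl\{y\}$, so $x\eqadj{\Cm}y$, and therefore every down-set is a chain. The same argument with the order reversed shows that middle vertices of induced $P_3$'s are strictly below both ends.

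\emph{(b)} By \ref{F1} it suffices to exclude an induced $C_4$ $x_1x_2x_3x_4$ in $\Cm(X)$ when $\Cm(X)$ is a cograph; this argument works in an arbitrary space. Choose $z\in\cl\{x_1\}\cap\cl\{x_2\}$. Then $\cl\{z\}\subseteq\cl\{x_1\}\cap\cl\{x_2\}$.
\begin{itemize}
\item $z$ is equal or $\Cm$-adjacent to $x_1$ and $x_2$, since $z$ itself lies in all three closures.
\item If $\cl\{z\}$ met $\cl\{x_3\}$, then $\cl\{x_1\}$ would meet $\cl\{x_3\}$, giving a chord; likewise for $x_4$ and $x_2$. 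So $z$ is neither equal nor adjacent to $x_3$ or $x_4$.
\item $z\ne x_1$: otherwise $\cl\{x_1\}\subseteq\cl\{x_2\}$, and $x_1\sim x_4$ would force $\cl\{x_2\}$ to meet $\cl\{x_4\}$, a chord. Symmetrically $z\ne x_2$.
\end{itemize}
Hence $z,x_2,x_3,x_4$ are four distinct vertices inducing a path $z\sim x_2\sim x_3\sim x_4$. This is an induced $P_4$, contradicting the cograph hypothesis.

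\emph{(c)} For Alexandrov $X$, Theorem~\ref{thm:hierarchy}(f) gives $x\sim_{\Ins}y$ iff $U_x\cap U_y\ne\emptyset$. I run the argument of (b) verbatim with $U_x$ in place of $\cl\{x\}$. Take $z\in U_{x_1}\cap U_{x_2}$; then $U_z\subseteq U_{x_1}\cap U_{x_2}$, and the same three points show that $z,x_2,x_3,x_4$ is an induced $P_4$.

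The main obstacle is making sure the witness $z$ is genuinely a new vertex and is non-adjacent to the far side of the cycle. Both facts rest on the inclusion of the witness's minimal set (its point closure, resp.\ its smallest open set) into the two sets it witnesses. This is exactly why (c) needs the Alexandrov hypothesis: without smallest open neighbourhoods, inseparability of two points need not be witnessed by a point (compare $X_\Gamma$ in Theorem~\ref{thm:XGamma}, where Corollary~\ref{cor:defect} realizes $C_4$ as a Hausdorff defect graph).
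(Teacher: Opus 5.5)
Your proof is correct.

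\textbf{Part (a).} This is essentially the paper's argument: up-sets are chains, so the middle vertex of an induced $P_3$ is a strict peak. For chordality you implicitly use (F1). In fact your peak argument applies verbatim to every induced cycle of length $k\ge4$, since the two cycle-neighbours of a vertex are then non-adjacent. The paper instead uses a minimal element of the cycle.

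\textbf{Part (b).} Here you take a genuinely different and shorter route. The paper picks a common upper bound $u_i$ for every consecutive pair and considers four candidate paths, $u_i - x_{i+1} - x_{i+2} - u_{i+2}$ and $u_i - x_i - x_{i+3} - u_{i+2}$. It then needs a four-case analysis to exclude the coincidences $u_i\in\{x_i,x_{i+1}\}$. You use a single witness $z$ and the inclusion $\cl\{z\}\subseteq\cl\{x_1\}\cap\cl\{x_2\}$. This makes $z$ inherit every non-adjacency of $x_1$ and $x_2$, and excludes all coincidences at once. In effect you show that consecutive vertices of an induced $C_4$ in $\Cm(X)$ are incomparable, which makes the paper's case analysis superfluous.

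\textbf{Part (c).} The paper deduces (c) from (b) via $\Ins(X)=\Cm(X^{op})$ for the opposite Alexandrov space. You instead rerun the argument with $U_x$ in place of $\cl\{x\}$.

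\textbf{Comparison.} Your version isolates the one property actually used: adjacency is non-empty intersection of sets $A_x$, and each edge $xy$ is witnessed by a vertex $z$ with $z\in A_z\subseteq A_x\cap A_y$. It therefore treats $\Cm$ and Alexandrov $\Ins$ uniformly. The paper's duality is the quicker way to obtain (c) once (b) is known.

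The reduction to $KX$ is harmless but unnecessary in (b) and (c).
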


\begin{proof}
By \ref{F2} we may replace $X$ by $KX$: equalities between graphs are preserved and reflected by the twin blow-up, and so are the properties of being a cograph or chordal. So we assume that $X$ is $T_0$, i.e.\ that the specialization preorder $\le$ is a partial order. Recall that $\cl\{x\}={\uparrow}x=\{y:x\le y\}$ for every space.

\emph{(a)} Suppose first that $\Sp(X)=\Ins(X)$. We claim that ${\uparrow}x$ is a chain for every $x$. Let $y,z\in{\uparrow}x$. Since $x\le y$, every open set containing $y$ contains $x$, and likewise for $z$. Hence every neighbourhood of $y$ meets every neighbourhood of $z$ (in the point $x$), so $y\eqadj{\Ins}z$, hence $y\eqadj{\Sp}z$, i.e.\ $y$ and $z$ are comparable.

Suppose that $a-b-c-d$ is an induced path in $\Sp(X)$. Reversing the labelling if necessary we may assume $b<c$. If $a<b$, then $a<c$ by transitivity and $a\sim c$, which is false; so $b<a$. Then $a,c\in{\uparrow}b$, which is a chain, so $a\sim c$, a contradiction. Hence $\Sp(X)$ is a cograph.

Suppose that $x_1,\dots,x_k$ ($k\ge4$) induce a cycle in $\Sp(X)$, and let $b$ be a minimal element of the finite poset $\{x_1,\dots,x_k\}$. The two neighbours $a,c$ of $b$ on the cycle are comparable with $b$, and by minimality they are not below $b$; so $a,c\in{\uparrow}b$. Since ${\uparrow}b$ is a chain, $a\sim c$, which is a chord. Hence $\Sp(X)$ is chordal.

If $\Sp(X)=\Cm(X)$, the same argument shows that ${\downarrow}x$ is a chain for every $x$: if $y,z\le x$ then $x\in\cl\{y\}\cap\cl\{z\}$, so $y\eqadj{\Cm}z$ and hence $y,z$ are comparable. Since the two arguments above are symmetric under reversing the order (use a maximal element of the cycle instead of a minimal one), $\Sp(X)$ is again a cograph and chordal.

\emph{(b)} Since $\cl\{x\}={\uparrow}x$, two points are adjacent in $\Cm(X)$ if and only if they have a common upper bound. By \ref{F1} it suffices to show that an induced $C_4$ in $\Cm(X)$ yields an induced $P_4$. Let $x_0,x_1,x_2,x_3$ induce a $4$-cycle, with indices modulo $4$, so that $x_i\sim x_{i+1}$ while $x_0\not\sim x_2$ and $x_1\not\sim x_3$. For each $i$ choose a common upper bound $u_i$ of $x_i$ and $x_{i+1}$. We record three observations.

\smallskip
\noindent(1) \emph{$u_i\notin\{x_{i+2},x_{i+3}\}$, and $u_i$ is adjacent to neither $x_{i+2}$ nor $x_{i+3}$.} If $u_i=x_{i+2}$ then $x_i\le x_{i+2}$, so $x_{i+2}$ is a common upper bound of $x_i,x_{i+2}$ and $x_i\sim x_{i+2}$, which is false. If $u_i\sim x_{i+2}$, a common upper bound $w$ of $u_i$ and $x_{i+2}$ satisfies $w\ge u_i\ge x_i$, so again $x_i\sim x_{i+2}$. The statements for $x_{i+3}$ follow in the same way, using $x_{i+1}$ and $x_{i+3}$.

\smallskip
\noindent(2) \emph{$u_i\neq u_{i+2}$ and $u_i\not\sim u_{i+2}$.} Otherwise there is a common upper bound of $u_i$ and $u_{i+2}$, which lies above $x_i$ and $x_{i+2}$, giving $x_i\sim x_{i+2}$.

\smallskip
\noindent(3) \emph{Two candidate paths.} Consider
\[
\pi_i:\ u_i - x_{i+1} - x_{i+2} - u_{i+2}\qquad\text{and}\qquad \rho_i:\ u_i - x_i - x_{i+3} - u_{i+2}.
\]
By (1) and (2), the four vertices of $\pi_i$ are distinct and $\pi_i$ is an induced $P_4$ as soon as $u_i\neq x_{i+1}$ and $u_{i+2}\neq x_{i+2}$. Indeed, the three edges are present because $u_i\ge x_{i+1}$ and $u_{i+2}\ge x_{i+2}$ (a point is adjacent to any distinct point above it), and the three non-edges $u_i x_{i+2}$, $x_{i+1}u_{i+2}$, $u_iu_{i+2}$ are given by (1) and (2). Similarly, $\rho_i$ is an induced $P_4$ as soon as $u_i\ne x_i$ and $u_{i+2}\neq x_{i+3}$.

\smallskip
Suppose, for a contradiction, that $\pi_0,\rho_0,\pi_1,\rho_1$ all fail to be induced paths. Since $u_0$ cannot equal both $x_0$ and $x_1$, and $u_2$ cannot equal both $x_2$ and $x_3$, the failure of $\pi_0$ and $\rho_0$ means that either $u_0=x_1$ and $u_2=x_3$, or $u_0=x_0$ and $u_2=x_2$. In the first case $x_0<x_1$ and $x_2<x_3$; in the second case $x_1<x_0$ and $x_3<x_2$. In the same way, the failure of $\pi_1$ and $\rho_1$ means that either $x_1<x_2$ and $x_3<x_0$, or $x_2<x_1$ and $x_0<x_3$. We examine the four combinations.
\begin{itemize}[leftmargin=2em]
\item $x_0<x_1<x_2$: then $x_0<x_2$, so $x_0\sim x_2$.
\item $x_0<x_3$ and $x_2<x_3$: then $x_3$ is a common upper bound of $x_0,x_2$, so $x_0\sim x_2$.
\item $x_1<x_0$ and $x_3<x_0$: then $x_0$ is a common upper bound of $x_1,x_3$, so $x_1\sim x_3$.
\item $x_3<x_2<x_1<x_0<x_3$: this contradicts antisymmetry.
\end{itemize}
Each case is impossible. Hence one of $\pi_0,\rho_0,\pi_1,\rho_1$ is an induced $P_4$. Therefore, if $\Cm(X)$ is a cograph, it contains no induced $C_4$, and by \ref{F1} it is chordal.

\emph{(c)} If $X$ is Alexandrov, then by Theorem~\ref{thm:hierarchy}(f) two points are adjacent in $\Ins(X)$ if and only if they have a common lower bound. Thus $\Ins(X)$ is the common-upper-bound graph of the opposite order, i.e.\ $\Ins(X)=\Cm(X^{op})$, where $X^{op}$ is the Alexandrov space of the opposite preorder. Now apply (b) to $X^{op}$.
\end{proof}

\begin{remark}\label{rem:cograph-counter}
\begin{enumerate}[label=(\roman*),leftmargin=2em]
\item \emph{The Alexandrov hypothesis in Proposition~\ref{prop:cograph}(c) cannot be omitted.} Apply Theorem~\ref{thm:XGamma} with $\Gamma=C_4$. The space $X_{C_4}$ is a second countable $T_1$ space, and in fact a non-Hausdorff $1$-manifold by Theorem~\ref{thm:XGamma}(e), since $C_4$ is $2$-regular. Its inseparability graph is $C_4$ together with isolated vertices. This graph is a cograph, because $C_4\cong K_{2,2}$ and isolated vertices create no induced $P_4$, but it is not chordal.
\item \emph{The analogue of Proposition~\ref{prop:cograph}(b),(c) fails for $\Sp$.} For the pseudo-circle $S$ of Figure~\ref{fig:pseudocircle} we have $\Sp(S)\cong C_4$, a cograph which is not chordal.
\item \emph{The converse of Proposition~\ref{prop:cograph}(a) is false.} The pseudo-circle again shows this: $\Sp(S)$ is a cograph, but $\Sp(S)$ differs from both $\Ins(S)$ and $\Cm(S)$. Even a chordal cograph $\Sp(X)$ need not equal $\Ins(X)$ or $\Cm(X)$. Take the disjoint union $X=V\sqcup V^{op}$ of the three-point spaces of Remark~\ref{rem:Sp-not-determine}. Then $\Sp(X)\cong P_3\sqcup P_3$ is trivially perfect. The two open points of $V^{op}$ have a common upper bound, so they are adjacent in $\Cm(X)$; they are incomparable, so they are not adjacent in $\Sp(X)$. Hence $\Sp(X)\neq\Cm(X)$. Dually, the two closed points of $V$ show that $\Sp(X)\ne\Ins(X)$.
\item \emph{Finite spaces.} A finite space can have an inseparability graph which is not a cograph; the smallest examples have $5$ points (Section~\ref{sec:data}). By Proposition~\ref{prop:holes}, a finite space whose inseparability graph is not chordal has at least $8$ points, and Proposition~\ref{prop:cograph}(c) shows that such a graph is never a cograph. The same holds for $\Cm(X)$ by passing to $X^{op}$.
\item \emph{Comparison with groups.} In~\cite{BFP} the finite groups whose enhanced power graph is a cograph are characterized, and the nonabelian finite simple groups with this property are classified. In the topological setting, Proposition~\ref{prop:cograph} shows that for $\Cm$, and for $\Ins$ of Alexandrov spaces, the cograph property already forces trivial perfectness.
\end{enumerate}
\end{remark}

\section{Homotopy: Dowker duality for inseparability}\label{sec:homotopy}

The clique complex of $\Sp(X)$ is the order complex of $X$ (Proposition~\ref{prop:Sp}). The inseparability and closure-meeting graphs carry natural higher-dimensional structures, namely the nerves of the families of minimal open sets and of point closures.

\begin{definition}
Let $X$ be a finite space. The \emph{inseparability complex} $\Nins(X)$ is the simplicial complex whose simplices are the non-empty $\sigma\subseteq X$ with $\bigcap_{x\in\sigma}U_x\ne\emptyset$. The \emph{closure-meeting complex} $\Ncm(X)$ has simplices the non-empty $\sigma$ with $\bigcap_{x\in\sigma}\cl\{x\}\neq\emptyset$. For a finite $T_0$ space, the \emph{Dowker complex} $\Dow(X)$ is the simplicial complex on $\Ob$ whose simplices are the non-empty sets $S\subseteq\Ob$ with $\bigcap_{m\in S}\cl\{m\}\ne\emptyset$, and $\Dow^*(X)$ is the complex on $\Cb$ whose simplices are the non-empty $T\subseteq\Cb$ with $\bigcap_{c\in T}U_c\ne\emptyset$.
\end{definition}

The $1$-skeleton of $\Nins(X)$ is $\Ins(X)$ and that of $\Ncm(X)$ is $\Cm(X)$. Moreover $\Nins(X)\subseteq\Cl(\Ins(X))$, and in general the inclusion is proper (Example~\ref{ex:triangle}).

\begin{theorem}\label{thm:homotopy}
Let $X$ be a finite $T_0$ space.
\begin{enumerate}[label=\textup{(\alph*)}]
\item There are homotopy equivalences
\[
|\Nins(X)|\simeq|\Dow(X)|\simeq|\Dow^*(X)|\simeq|\Ncm(X)| .
\]
In particular $|\Nins(X)|\simeq|\Nins(X^{op})|$, although $\Ins(X)$ and $\Ins(X^{op})=\Cm(X)$ may be non-isomorphic.
\item Suppose that for every non-empty set $S\subseteq\Ob$ with $F_S:=\bigcap_{m\in S}\cl\{m\}\neq\emptyset$, the order complex $\Delta(F_S)$ is contractible \textup{(}this holds, for instance, if every such $F_S$ has a minimum, i.e.\ is irreducible\textup{)}. Then $|\Delta(X)|\simeq|\Dow(X)|$, and consequently $|\Nins(X)|$ and $|\Ncm(X)|$ are weakly homotopy equivalent to $X$.
\item $\Nins(X)=\Cl(\Ins(X))$ if and only if the family $\{M(x):x\in X\}$ has the Helly property \textup{(}every pairwise intersecting subfamily has non-empty intersection\textup{)}.
\end{enumerate}
\end{theorem}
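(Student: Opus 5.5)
For part (a) I will apply Lemma~\ref{lem:dowker} twice to natural relations. First take $R\subseteq X\times\Ob$ with $x\,R\,m$ iff $m\le x$, i.e.\ $m\in M(x)$. Its complex $K_R$ consists of the sets $\sigma\subseteq X$ for which some open point lies in every $U_x$, $x\in\sigma$. Any common point $z$ of the $U_x$ lies above an open point $m$, and that $m$ is again common, so $K_R=\Nins(X)$. The other complex $L_R$ consists of the sets $S\subseteq\Ob$ having a common upper bound, i.e.\ with $\bigcap_{m\in S}\cl\{m\}\ne\emptyset$; this is $\Dow(X)$. Dually, the relation $x\,R'\,c$ iff $x\le c$ on $X\times\Cb$ gives $K_{R'}=\Ncm(X)$ and $L_{R'}=\Dow^*(X)$, using again that a common upper bound can be pushed up to a closed point. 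For the middle equivalence $\Dow(X)\simeq\Dow^*(X)$ I will use the relation $m\le c$ between $\Ob$ and $\Cb$. A set $S\subseteq\Ob$ has a common upper bound iff it has a common closed upper bound, so $K=\Dow(X)$; symmetrically $L=\Dow^*(X)$. The remark about $X^{op}$ follows from $\Nins(X^{op})=\Ncm(X)$.

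For part (b) I will apply Lemma~\ref{lem:nerve} to $K=\Delta(X)$ covered by the subcomplexes $K_m=\Delta(\cl\{m\})=\Delta({\uparrow}m)$ for $m\in\Ob$. These cover $\Delta(X)$ because every chain has a minimum element, which lies above some open point $m$, and then the whole chain lies in ${\uparrow}m$. The intersection $\bigcap_{m\in S}K_m$ equals $\Delta(F_S)$, since a chain lies in every ${\uparrow}m$ iff its elements do. By hypothesis this is empty or contractible. The nerve of the cover is exactly $\Dow(X)$, so $|\Delta(X)|\simeq|\Dow(X)|$. Combining with (a) and McCord's theorem then gives that $|\Nins(X)|$ and $|\Ncm(X)|$ are weakly equivalent to $X$. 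The parenthetical case is immediate: if $F_S$ has a minimum, then $\Delta(F_S)$ is a cone.

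For part (c) the key observation is that $\sigma\in\Nins(X)$ iff $\bigcap_{x\in\sigma}M(x)\ne\emptyset$, by the same push-down to open points as in (a). Also, $\sigma$ is a clique of $\Ins(X)$ iff the sets $M(x)$, $x\in\sigma$, pairwise intersect. So $\Nins=\Cl(\Ins)$ is literally the Helly property for the family of the sets $M(x)$. One point to check is that repetitions (different $x$ with equal $M(x)$) cause no trouble, since the Helly property refers to subfamilies indexed by points. The only real care needed anywhere is in matching the Dowker complexes exactly, including discarding unrelated elements; none arise here, since each $x$ has $M(x)\neq\emptyset$ and $M^*(x)\neq\emptyset$. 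I expect (b) to be the main point, and specifically the verification that intersections of the $K_m$ are full order complexes of $F_S$.
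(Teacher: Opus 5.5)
Your proposal is correct and follows the paper's proof essentially step by step. For (a) you use the same three Dowker relations ($m\le x$, $x\le c$, $m\le c$; so Lemma~\ref{lem:dowker} is applied three times, not twice), for (b) the nerve lemma on the cover of $\Delta(X)$ by the cones $\Delta({\uparrow}m)$, and for (c) the $M(x)$-description of simplices. The one thing you leave out is a witness for the clause that $\Ins(X)$ and $\Cm(X)$ may be non-isomorphic: the paper uses the space $T$ of Example~\ref{ex:same-Ins}, where $\Ins(T)\cong K_4-e$ but $\Cm(T)=K_4$ because $T$ has a maximum.
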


\begin{proof}
\emph{Preliminary observations.} Since $U_x={\downarrow}x$ and $\cl\{x\}={\uparrow}x$, a set $\sigma$ is a simplex of $\Nins(X)$ iff its elements have a common lower bound, iff they have a common \emph{open} lower bound (below any lower bound there is a minimal element). Dually $\sigma\in\Ncm(X)$ iff its elements have a common closed upper bound. Similarly $S\in\Dow(X)$ iff the open points in $S$ have a common upper bound iff they have a common closed upper bound; and $T\in\Dow^*(X)$ iff the closed points in $T$ have a common open lower bound.

\emph{(a)} We apply Dowker's Lemma~\ref{lem:dowker} three times.

First, let $R_1\subseteq X\times\Ob$ be the relation $xR_1m\iff m\le x$. Its complex on $X$ is
\[
K_{R_1}=\{\sigma: \exists m\in\Ob\ \forall x\in\sigma\ m\le x\}=\Nins(X)
\]
by the preliminary observation, and its complex on $\Ob$ is
\[
L_{R_1}=\{S\subseteq\Ob:\exists x\in X\ \forall m\in S\ m\le x\}=\Dow(X).
\]
Every element of $X$ and of $\Ob$ is related to something ($x$ lies above an open point; $m\le m$), so no discarding is needed. Lemma~\ref{lem:dowker} gives $|\Nins(X)|\simeq|\Dow(X)|$.

Second, let $R_3\subseteq\Ob\times\Cb$ be $mR_3c\iff m\le c$. Then $K_{R_3}=\{S\subseteq\Ob:\exists c\in\Cb\ \forall m\in S\ m\le c\}=\Dow(X)$, because a family of open points has a common upper bound iff it has a common closed upper bound; and $L_{R_3}=\{T\subseteq\Cb:\exists m\in\Ob\ \forall c\in T\ m\le c\}=\Dow^*(X)$. Hence $|\Dow(X)|\simeq|\Dow^*(X)|$.

Third, let $R_2\subseteq X\times\Cb$ be $xR_2c\iff x\le c$. Then $K_{R_2}=\Ncm(X)$ and $L_{R_2}=\{T\subseteq\Cb:\exists x\ \forall c\in T\ x\le c\}=\Dow^*(X)$, using that a common lower bound can be replaced by a common open lower bound. Hence $|\Ncm(X)|\simeq|\Dow^*(X)|$.

Chaining the three equivalences proves the first assertion. Since $\Nins(X^{op})=\Ncm(X)$ (open and closed sets are interchanged), we get $|\Nins(X)|\simeq|\Nins(X^{op})|$. The pseudo-circle $S$ has $\Ins(S)\cong\Cm(S)$, but for the space $T$ of Example~\ref{ex:same-Ins} we have $\Ins(T)\cong K_4-e$ whereas $\Cm(T)=K_4$ (as $T$ has a maximum); so $\Ins(X)\not\cong\Ins(X^{op})$ in general.

\emph{(b)} We show first that
\begin{equation}\label{eq:cover}
\Delta(X)=\bigcup_{m\in\Ob}\Delta(\cl\{m\}),
\end{equation}
where $\Delta(\cl\{m\})$ is the order complex of the subposet ${\uparrow}m$, a subcomplex of $\Delta(X)$. The inclusion $\supseteq$ is clear. Conversely let $x_1<x_2<\dots<x_r$ be a chain, and choose an open point $m\le x_1$; then all $x_i\ge m$, so the chain lies in ${\uparrow}m$. Next, for $\emptyset\ne S\subseteq\Ob$,
\begin{equation}\label{eq:capcomplex}
\bigcap_{m\in S}\Delta(\cl\{m\})=\Delta\Big(\bigcap_{m\in S}\cl\{m\}\Big)=\Delta(F_S),
\end{equation}
because a chain lies in each ${\uparrow}m$ iff all its elements lie in their intersection. Each $\Delta(\cl\{m\})$ is contractible, being a cone with apex $m$ (the minimum of ${\uparrow}m$). By hypothesis each non-empty intersection~\eqref{eq:capcomplex} is contractible. The nerve of the cover~\eqref{eq:cover} is the complex on $\Ob$ whose simplices are the sets $S$ with $\Delta(F_S)\ne\emptyset$, i.e.\ with $F_S\ne\emptyset$; this is exactly $\Dow(X)$. The nerve lemma (Lemma~\ref{lem:nerve}) gives $|\Delta(X)|\simeq|\Dow(X)|$. Combining with (a) and McCord's theorem, $|\Nins(X)|\simeq|\Dow(X)|\simeq|\Delta(X)|\simeq_wX$, and similarly for $\Ncm(X)$. If $F_S$ has a minimum $z$, then $\Delta(F_S)$ is a cone with apex $z$ and hence contractible; and a finite $T_0$ space has a minimum iff it is irreducible.

\emph{(c)} A set $\sigma$ is a simplex of $\Cl(\Ins(X))$ iff $M(x)\cap M(y)\ne\emptyset$ for all $x,y\in\sigma$, and a simplex of $\Nins(X)$ iff $\bigcap_{x\in\sigma}M(x)\ne\emptyset$ (by the preliminary observation). The two complexes coincide iff every pairwise intersecting subfamily of $\{M(x)\}$ has a common element, i.e.\ iff the family is Helly.
\end{proof}

\begin{corollary}\label{cor:faceposet}
Let $K$ be a finite simplicial complex and $\chi(K)$ its face poset, viewed as a finite $T_0$ space \textup{(}open sets are down-sets; the open points are the vertices\textup{)}. Then $\Dow(\chi(K))=K$, and
\[
|\Nins(\chi(K))|\simeq|\Ncm(\chi(K))|\simeq|K| .
\]
In words: two faces of $K$ are inseparable iff they share a vertex, and the complex of families of faces sharing a common vertex has the homotopy type of $K$.
\end{corollary}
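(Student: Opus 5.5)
The plan is to derive the corollary from Theorem~\ref{thm:homotopy}, by first identifying $\Dow(\chi(K))$ with $K$ and then checking the acyclicity hypothesis of Theorem~\ref{thm:homotopy}(b).

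First, recall the structure of $\chi(K)$. Its points are the non-empty faces of $K$, ordered by inclusion, and open sets are down-sets. The minimal elements, i.e.\ the open points, are exactly the vertices $\{v\}$ of $K$. For a vertex $v$, the closure $\cl\{v\}={\uparrow}\{v\}$ is the set of faces containing $v$.

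\textbf{Identifying $\Dow$.} For a non-empty set $S$ of vertices, $F_S=\bigcap_{v\in S}\cl\{v\}$ is the set of faces containing $S$. This set is non-empty iff $S$ lies in some face, iff $S\in K$, because $K$ is closed under subsets. Hence the simplices of $\Dow(\chi(K))$ are exactly the simplices of $K$, so $\Dow(\chi(K))=K$. Theorem~\ref{thm:homotopy}(a) then immediately gives $|\Nins(\chi(K))|\simeq|\Dow(\chi(K))|=|K|\simeq|\Ncm(\chi(K))|$.

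This means part (b) of Theorem~\ref{thm:homotopy} is not needed for the displayed equivalences. Still, it is worth noting that its hypothesis holds here: when $F_S\neq\emptyset$, the face $S$ itself is the minimum of $F_S$, so $F_S$ is irreducible. Consequently $|\Nins(\chi(K))|$ is also weakly equivalent to $\chi(K)$. This agrees with McCord's theorem, since $|\Delta(\chi(K))|$ is the barycentric subdivision of $|K|$.

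\textbf{The verbal statement.} Two faces $\sigma,\tau$ are inseparable iff $U_\sigma\cap U_\tau\ne\emptyset$ by Theorem~\ref{thm:hierarchy}(f). This holds iff they have a common non-empty subface, iff they share a vertex. More generally, a family of faces forms a simplex of $\Nins(\chi(K))$ iff the faces have a common open lower bound, i.e.\ a common vertex, by the preliminary observation in the proof of Theorem~\ref{thm:homotopy}.

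There is no real obstacle here. The only care needed is in the equality $\Dow(\chi(K))=K$, which rests on two points: $K$ is closed under taking subsets, and vertices of $K$ correspond bijectively to the open points of $\chi(K)$.
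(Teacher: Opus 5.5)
Your proposal is correct and follows the paper's argument: you compute $F_S$ as the set of faces containing $S$, conclude $\Dow(\chi(K))=K$ via $\{v\}\leftrightarrow v$, and invoke Theorem~\ref{thm:homotopy}. Your remark that part~(a) alone already gives the displayed equivalences is accurate; the paper also checks the irreducibility hypothesis of part~(b), which only adds the weak equivalence with $\chi(K)$ itself, as you note.
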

\begin{proof}
The open points of $\chi(K)$ are the vertices $\{v\}$. For a set $S$ of vertices, $F_S=\bigcap_{v\in S}\cl\{v\}$ is the set of faces containing all $v\in S$. It is empty unless $S$ is a face, in which case it is ${\uparrow}S$, which has minimum $S$. So the hypothesis of Theorem~\ref{thm:homotopy}(b) holds, and $S\in\Dow(\chi(K))$ iff $F_S\neq\emptyset$ iff $S\in K$. Hence $\Dow(\chi(K))=K$, and Theorem~\ref{thm:homotopy} gives $|\Nins(\chi(K))|\simeq|\Ncm(\chi(K))|\simeq|K|$. (This is consistent with McCord's theorem, as $|\Delta(\chi(K))|$ is the barycentric subdivision of $|K|$.)
\end{proof}

\begin{example}[The boundary of a triangle]\label{ex:triangle}
Let $K=\partial\Delta^2$, with vertices $a,b,c$ and edges $ab,bc,ca$, so $\chi(K)$ is a $6$-point model of $S^1$. The inseparability graph $\Ins(\chi(K))$ has edges $a$--$ab$, $a$--$ca$, $b$--$ab$, $b$--$bc$, $c$--$bc$, $c$--$ca$ and the triangle $ab$--$bc$--$ca$ (any two edges of $K$ share a vertex). The maximal simplices of $\Nins(\chi(K))$ are $\{a,ab,ca\}$, $\{b,ab,bc\}$, $\{c,bc,ca\}$; the triangle $\{ab,bc,ca\}$ is \emph{not} a simplex because the three edges have no common vertex. Hence $|\Nins(\chi(K))|$ is an annulus, homotopy equivalent to $S^1$, in accordance with Corollary~\ref{cor:faceposet}, while $\Cl(\Ins(\chi(K)))$ contains the extra $2$-simplex $\{ab,bc,ca\}$ and is collapsible, hence contractible. The family $\{M(x)\}$ fails the Helly property: $M(ab)=\{a,b\}$, $M(bc)=\{b,c\}$, $M(ca)=\{c,a\}$. See Figure~\ref{fig:triangle}.
\end{example}

\begin{figure}[ht]
\centering
\begin{tikzpicture}[scale=1.0,pt/.style={circle,fill,inner sep=1.4pt},op/.style={circle,draw,fill=white,inner sep=1.6pt}]
\begin{scope}
\node[op,label=below:$a$] (a) at (0,0) {}; \node[op,label=below:$b$] (b) at (1.3,0) {}; \node[op,label=below:$c$] (c) at (2.6,0) {};
\node[pt,label=above:$ab$] (ab) at (0.2,1.5) {}; \node[pt,label=above:$bc$] (bc) at (1.3,1.5) {}; \node[pt,label=above:$ca$] (ca) at (2.4,1.5) {};
\draw (a)--(ab)--(b)--(bc)--(c)--(ca)--(a);
\node at (1.3,-0.9) {$\chi(\partial\Delta^2)$};
\end{scope}
\begin{scope}[xshift=4.8cm,yshift=0.7cm]
\coordinate (AB) at (90:1); \coordinate (BC) at (210:1); \coordinate (CA) at (330:1);
\coordinate (B) at ($(AB)!0.5!(BC)+(150:0.95)$); \coordinate (C) at ($(BC)!0.5!(CA)+(270:0.95)$); \coordinate (A) at ($(CA)!0.5!(AB)+(30:0.95)$);
\fill[blue!15] (AB)--(BC)--(B)--cycle; \fill[blue!15] (BC)--(CA)--(C)--cycle; \fill[blue!15] (CA)--(AB)--(A)--cycle;
\draw (AB)--(BC)--(CA)--cycle (AB)--(B)--(BC) (BC)--(C)--(CA) (CA)--(A)--(AB);
\foreach \p/\l/\pos in {AB/ab/above,BC/bc/below left,CA/ca/below right,A/a/right,B/b/left,C/c/below}{\node[pt,label={[font=\small]\pos:$\l$}] at (\p) {};}
\node at (0,-2.2) {$\Nins$: an annulus $\simeq S^1$};
\end{scope}
\begin{scope}[xshift=9.6cm,yshift=0.7cm]
\coordinate (AB) at (90:1); \coordinate (BC) at (210:1); \coordinate (CA) at (330:1);
\coordinate (B) at ($(AB)!0.5!(BC)+(150:0.95)$); \coordinate (C) at ($(BC)!0.5!(CA)+(270:0.95)$); \coordinate (A) at ($(CA)!0.5!(AB)+(30:0.95)$);
\fill[blue!15] (AB)--(BC)--(B)--cycle; \fill[blue!15] (BC)--(CA)--(C)--cycle; \fill[blue!15] (CA)--(AB)--(A)--cycle;
\fill[red!25] (AB)--(BC)--(CA)--cycle;
\draw (AB)--(BC)--(CA)--cycle (AB)--(B)--(BC) (BC)--(C)--(CA) (CA)--(A)--(AB);
\foreach \p/\l/\pos in {AB/ab/above,BC/bc/below left,CA/ca/below right,A/a/right,B/b/left,C/c/below}{\node[pt,label={[font=\small]\pos:$\l$}] at (\p) {};}
\node at (0,-2.2) {$\Cl(\Ins)$: contractible};
\end{scope}
\end{tikzpicture}
\caption{Example~\ref{ex:triangle}: the face poset of $\partial\Delta^2$ (open points hollow), its inseparability complex and the clique complex of its inseparability graph. The shaded middle triangle belongs only to the clique complex.}
\label{fig:triangle}
\end{figure}

\begin{example}[The hypothesis of Theorem~\ref{thm:homotopy}(b) is needed]
For the pseudo-circle $S$ (Figure~\ref{fig:pseudocircle}), $\Ob=\{a,b\}$ and $F_{\{a,b\}}=\{c,d\}$ is a two-point discrete space, which is not contractible. Here $\Dow(S)$ is the full simplex on $\{a,b\}$ (they have a common upper bound), so $|\Nins(S)|$ and $|\Ncm(S)|$ are contractible, whereas $S\simeq_w S^1$. Thus the inseparability complex remembers the homotopy type exactly when intersections of closures of open points are homotopically trivial; this is a finite-space version of the nerve theorem for good covers.
\end{example}

\begin{remark}
Simplicial complexes attached to algebraic structures, namely the independence and strong independence complexes of an algebra, are studied in~\cite{Cam-alg}; the inseparability and closure-meeting complexes play a similar role for spaces. Theorem~\ref{thm:homotopy}(a) is the topological counterpart of the ``dual pair'' phenomenon of~\cite[\S12]{Cam-groups}: there, a graph on the non-identity elements of a group and an intersection graph of subgroups are the two halved graphs of a bipartite incidence graph, and they share connectivity. Here the incidence relation between open points and closed points yields the two Dowker complexes $\Dow(X)$ and $\Dow^*(X)$, which share their entire homotopy type, and the inseparability and closure-meeting complexes are attached to them by two further Dowker dualities. Passing to $1$-skeleta, Proposition~\ref{prop:distances} recovers the equality of the numbers of components of $\Ins(X)$ and $\Cm(X)$.
\end{remark}

\section{Invariant topologies, primitive groups and neighbourhood filters}\label{sec:groups}

So far the group acting on a space has been its full homeomorphism group, which acts on each of the five graphs. In this section we reverse the point of view, following Cameron's work on topologies preserved by permutation groups~\cite{Cam-top,CamV}: we start with a permutation group $G$ on a set $\Omega$ and ask what the five graphs of a $G$-invariant topology can look like. Two observations make our hierarchy particularly well suited to this. First, the graph used by Cameron in his simplification of the Macpherson--Praeger theorem~\cite{MacPr,Cam-top}, in which $x$ and $y$ are joined if they have disjoint open neighbourhoods, is precisely the complement of the inseparability graph; we call it the \emph{separation graph} $\operatorname{Sep}(X)=\overline{\Ins(X)}$. Secondly, the correspondence between topologies and preorders used in~\cite{CamV} is the specialization preorder of Section~\ref{sec:prelim}, and the specialization graph is its comparability graph.

\subsection{Primitivity and separation}
Let $G$ be a permutation group on a set $\Omega$ with $|\Omega|\ge2$. A topology $\mathcal T$ on $\Omega$ is \emph{$G$-invariant} if $G\le\Homeo(\Omega,\mathcal T)$. Following~\cite{CamV}, a relation or topology on $\Omega$ is \emph{trivial} if it is invariant under the full symmetric group $\operatorname{Sym}(\Omega)$ (for instance the discrete, the indiscrete and the cofinite topologies are trivial). The group $G$ is \emph{primitive} if it is transitive and preserves no non-trivial equivalence relation, and \emph{strongly primitive} (Wielandt) if it is transitive and preserves no non-trivial preorder.

\begin{lemma}\label{lem:trivial-rel}
If $|\Omega|\ge2$, the only $\operatorname{Sym}(\Omega)$-invariant reflexive relations on $\Omega$ are equality and the universal relation. In particular these are the only trivial equivalence relations and the only trivial preorders.
\end{lemma}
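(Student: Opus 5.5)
The approach is to use the orbit structure of $\operatorname{Sym}(\Omega)$ acting on ordered pairs. Let $R$ be a $\operatorname{Sym}(\Omega)$-invariant reflexive relation on $\Omega$. Since $R$ is reflexive, it contains the diagonal $\{(x,x):x\in\Omega\}$, and so it remains to decide which off-diagonal pairs lie in $R$. The key observation is that $\operatorname{Sym}(\Omega)$ acts transitively on the set of ordered pairs $(x,y)$ with $x\neq y$. Given two such pairs $(x,y)$ and $(x',y')$, I would exhibit a permutation $g$ with $g(x)=x'$ and $g(y)=y'$. One way is to take the product of the transpositions $(x\,x')$ and $(g_1(y)\,y')$, where $g_1=(x\,x')$, handling the degenerate cases in which points coincide. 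Alternatively, extend the bijection $\{x,y\}\to\{x',y'\}$ by any bijection between the complements $\Omega\setminus\{x,y\}$ and $\Omega\setminus\{x',y'\}$. These complements have the same cardinality, even when $\Omega$ is infinite, because removing two points from a set always leaves sets of equal size.

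With this transitivity in hand, the conclusion follows from the invariance of $R$. Suppose some pair $(x,y)$ with $x\neq y$ lies in $R$. Then every off-diagonal pair lies in $R$, since each is the image of $(x,y)$ under some permutation, and so $R$ is the universal relation. Otherwise no off-diagonal pair lies in $R$, and $R$ is exactly equality. The hypothesis $|\Omega|\ge2$ only ensures that these two relations are distinct, which makes the dichotomy meaningful.

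For the ``in particular'' clause, note that equivalence relations and preorders are reflexive. By definition, a trivial equivalence relation or trivial preorder is a $\operatorname{Sym}(\Omega)$-invariant one, so the first part applies directly. Both equality and the universal relation are indeed equivalence relations and hence preorders, so they are exactly the trivial ones.

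The only step needing care is the existence of the permutation, specifically the cardinality of the complements in the infinite case. The bijection-extension argument handles this cleanly, so I do not expect any real obstacle.
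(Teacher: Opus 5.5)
Your proof is correct and follows the paper's argument: both rest on the transitivity of $\operatorname{Sym}(\Omega)$ on ordered pairs of distinct points, so an invariant reflexive relation containing one off-diagonal pair contains all of them. Your explicit construction of the required permutation, whether by two transpositions or by extending a bijection between the complements, only supplies a detail the paper leaves implicit.
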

\begin{proof}
$\operatorname{Sym}(\Omega)$ is transitive on ordered pairs of distinct points, so an invariant relation containing one such pair contains all of them.
\end{proof}

Recall (Section~\ref{sec:prelim}) that $\mathcal T$ is $T_0$ iff its indistinguishability relation is equality, and $T_1$ iff its specialization preorder is equality. For a preorder $P$ on $\Omega$ let $\mathcal A(P)$ denote its Alexandrov topology, whose open sets are the down-sets of $P$ (in our convention $x\le y$ means $x\in U_y$). The specialization preorder of $\mathcal A(P)$ is $P$ itself. In the terminology of~\cite{CamV}, the Alexandrov topologies are exactly the \emph{relational} topologies, those recovered from their specialization preorder. (Cameron writes $x\to y$ if every open set containing $x$ contains $y$; thus $x\to y$ iff $y\le x$ in our notation.)

The next proposition is due to Cameron~\cite[Proposition~2.1]{CamV}; we include a proof in our language because its mechanism is exactly the Kolmogorov and specialization structure of Theorem~\ref{thm:hierarchy}.

\begin{proposition}[\cite{CamV}]\label{prop:primitive-T0}
Let $G$ be a transitive permutation group on $\Omega$, $|\Omega|\ge2$.
\begin{enumerate}[label=\textup{(\alph*)}]
\item $G$ is primitive iff every non-trivial $G$-invariant topology on $\Omega$ is $T_0$.
\item $G$ is strongly primitive iff every non-trivial $G$-invariant topology on $\Omega$ is $T_1$, i.e.\ iff $\Sp(\Omega,\mathcal T)$ \textup{(}equivalently $\Cm(\Omega,\mathcal T)$\textup{)} is null for every non-trivial $G$-invariant topology $\mathcal T$.
\end{enumerate}
\end{proposition}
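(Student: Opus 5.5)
The plan is to pass, in each direction, between invariant topologies and invariant relations via two standard constructions: the indistinguishability relation (or the specialization preorder) of a topology, and the Alexandrov topology $\mathcal A(P)$ of a preorder $P$. Both constructions commute with every permutation of $\Omega$, so $G$-invariance is transferred. The argument is then a bookkeeping exercise with Lemma~\ref{lem:trivial-rel}.

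For (a), forward direction: let $G$ be primitive and $\mathcal T$ a $G$-invariant topology. The relation $x\equiv y\iff x\le y\le x$ is a $G$-invariant equivalence relation. By primitivity and Lemma~\ref{lem:trivial-rel}, it is either equality, in which case $\mathcal T$ is $T_0$, or universal. In the universal case every open set containing one point contains all points, so $\mathcal T$ is indiscrete and hence trivial. For the converse, suppose $G$ is transitive but imprimitive, preserving a non-trivial equivalence relation $E$. An equivalence relation is a preorder, so we take $\mathcal A(E)$: its open sets are the unions of $E$-classes. This topology is $G$-invariant and its indistinguishability relation is $E$, so it is not $T_0$. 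It is also non-trivial, since $\operatorname{Sym}(\Omega)$-invariance would force its specialization preorder $E$ to be $\operatorname{Sym}(\Omega)$-invariant, contradicting Lemma~\ref{lem:trivial-rel}.

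For (b), forward direction: let $G$ be strongly primitive and $\mathcal T$ non-trivial and $G$-invariant. The specialization preorder is $G$-invariant, so it is equality ($\mathcal T$ is $T_1$) or universal ($\mathcal T$ is indiscrete, hence trivial). For the converse, a transitive $G$ preserving a non-trivial preorder $P$ gives the $G$-invariant topology $\mathcal A(P)$, whose specialization preorder is $P\neq$ equality. So $\mathcal A(P)$ is not $T_1$, and it is non-trivial by the same argument as in (a). The clause about $\Sp$ and $\Cm$ is Theorem~\ref{thm:hierarchy}(b).

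The main subtlety is the notion of ``non-trivial''. A non-$T_0$ or non-$T_1$ topology could conceivably be trivial without being indiscrete, for instance the cofinite topology, which is $T_1$ and so causes no problem. For the converses we therefore need to check carefully that $\mathcal A(E)$ and $\mathcal A(P)$ are not $\operatorname{Sym}(\Omega)$-invariant. Reading off the specialization preorder, as above, handles this. In the forward directions it suffices that the universal indistinguishability relation forces the indiscrete topology, and that topology is trivial.
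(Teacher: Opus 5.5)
Your proposal is correct and follows the paper's proof essentially step for step. In the forward directions you use the $G$-invariant indistinguishability relation and the specialization preorder; in the converses you use the topology of unions of $E$-classes (your $\mathcal A(E)$ is the paper's $\mathcal T_E$) and $\mathcal A(P)$, reading off non-triviality from the recovered relation. Certifying non-triviality of $\mathcal A(E)$ through its specialization preorder rather than its indistinguishability relation makes no difference, since both equal $E$.
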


\begin{proof}
(a) Let $\mathcal T$ be $G$-invariant. Homeomorphisms preserve topological indistinguishability, so its indistinguishability relation $\equiv_{\mathcal T}$ is a $G$-invariant equivalence relation. If $G$ is primitive, $\equiv_{\mathcal T}$ is trivial, hence (Lemma~\ref{lem:trivial-rel}) equality, in which case $\mathcal T$ is $T_0$, or universal, in which case every non-empty open set is $\Omega$ and $\mathcal T$ is indiscrete, hence trivial. Conversely, suppose $G$ preserves a non-trivial equivalence relation $E$. The topology $\mathcal T_E$ whose open sets are the unions of $E$-classes is $G$-invariant and its indistinguishability relation is $E$, which is not equality, so $\mathcal T_E$ is not $T_0$. It is non-trivial: a permutation preserving $\mathcal T_E$ preserves $E=\,\equiv_{\mathcal T_E}$, and since $E$ is not $\operatorname{Sym}(\Omega)$-invariant, some permutation does not preserve $\mathcal T_E$.

(b) The specialization preorder $\le_{\mathcal T}$ of a $G$-invariant topology is $G$-invariant. If $G$ is strongly primitive it is equality ($\mathcal T$ is $T_1$) or universal ($\mathcal T$ indiscrete, hence trivial). Conversely, if $G$ preserves a non-trivial preorder $P$, then $\mathcal A(P)$ is $G$-invariant, its specialization preorder is $P\neq$ equality, so it is not $T_1$, and it is non-trivial because a permutation preserving $\mathcal A(P)$ preserves its specialization preorder $P$, which is not $\operatorname{Sym}(\Omega)$-invariant. The reformulation in terms of $\Sp$ and $\Cm$ is Theorem~\ref{thm:hierarchy}(b).
\end{proof}

We now show what primitivity forces on the whole hierarchy; the following theorem is new.

\begin{theorem}\label{thm:invariant}
Let $G$ be a transitive permutation group on $\Omega$, let $\mathcal T$ be a $G$-invariant topology, $X=(\Omega,\mathcal T)$, and let $\Gamma$ be one of $\Sp,\Ins,\Cm,\Ury,\Fun$.
\begin{enumerate}[label=\textup{(\alph*)}]
\item $\Gamma(X)$ is a $G$-invariant, vertex-transitive graph; in particular it is regular \textup{(}possibly of infinite valency\textup{)}.
\item If $G$ is primitive, then each $\Gamma(X)$ is either null or connected; $\Fun(X)$ is null or complete; and if $\Ins(X)$ is a disjoint union of cliques then it is null or complete.
\item If $G$ is primitive and $\mathcal T$ contains a non-empty finite open set, then $\mathcal T$ is discrete, or $\Omega$ is finite and $\mathcal T$ is indiscrete.
\item If $\Omega$ is finite, then $\Omega$ is partitioned into blocks of imprimitivity of equal size $b$, each an indiscrete open subspace, and $\mathcal T$ is the topology of unions of blocks. All five graphs coincide and are equal to the disjoint union of $|\Omega|/b$ copies of $K_b$. In particular, if $G$ is primitive, all five graphs are null \textup{(}$\mathcal T$ discrete\textup{)} or all are complete \textup{(}$\mathcal T$ indiscrete\textup{)}.
\item If $\mathcal T$ is Alexandrov, $T_0$ and not discrete, then $X$ has neither open points nor closed points, every minimal open set $U_x$ and every point closure $\cl\{x\}$ is infinite, and every vertex of $\Sp(X)$ has infinite valency. In particular $\Omega$ is infinite.
\end{enumerate}
\end{theorem}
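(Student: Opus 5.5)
Parts (a), (b) and (d) come from $G$-invariance plus Theorem~\ref{thm:hierarchy}. Part (c) comes from a minimal-open-set argument producing blocks of imprimitivity. Part (e) is an order-theoretic argument in the Alexandrov case.

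For (a), every $g\in G$ is a homeomorphism of $X$, so $G\le\Homeo(X)\le\Aut(\Gamma(X))$ by the remark after Definition~\ref{def:graphs}. Since $G$ is transitive, $\Gamma(X)$ is vertex-transitive and hence regular.

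For (b), the relation ``lies in the same connected component of $\Gamma(X)$'' is a $G$-invariant equivalence relation. If $G$ is primitive it is trivial, hence equality or universal by Lemma~\ref{lem:trivial-rel}. This gives a null graph or a connected one. The same argument applies to the classes of $\Fun(X)$, which are the classes of an equivalence relation by Theorem~\ref{thm:hierarchy}(c); so $\Fun(X)$ is null or complete. If $\Ins(X)$ is a disjoint union of cliques, then $\eqadj{\Ins}$ is itself a $G$-invariant equivalence relation, and the same conclusion follows.

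For (c), I would choose a non-empty finite open set $U$ of minimum cardinality. For $g\in G$ the set $gU$ is open, and $U\cap gU$ is open and finite. By minimality, either $U\cap gU=\emptyset$ or $U\cap gU=U$, so that $gU=U$ because $|gU|=|U|$. By transitivity the translates $gU$ cover $\Omega$, so they form a $G$-invariant partition, i.e.\ a system of blocks. Primitivity leaves two cases.
\begin{itemize}[leftmargin=2em]
\item If $U$ is a singleton, all points are open and $\mathcal T$ is discrete.
\item If $U=\Omega$, then $\Omega$ is finite. Every non-empty open set is then finite, so it has at least $|U|=|\Omega|$ points and equals $\Omega$; hence $\mathcal T$ is indiscrete.
\end{itemize}

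For (d) I would run the same argument without assuming primitivity. This is the step needing most care. Take $U$ a non-empty open set of minimum size, and let the blocks be its translates, of common size $b$. I then need to show that each minimal open set $U_x$ equals the block $B$ containing $x$. The set $U_x\cap B$ is non-empty, open and contained in $B$, so by minimality it equals $B$, giving $B\subseteq U_x$. Since $B$ is an open set containing $x$, also $U_x\subseteq B$. Hence $U_x=B$ for every $x$. It follows that $\mathcal T$ consists exactly of the unions of blocks, each block is an indiscrete open subspace, and the Kolmogorov quotient is discrete. Theorem~\ref{thm:hierarchy}(e) then shows that every one of the five graphs is the blow-up of a null graph into cliques on the blocks, namely $(|\Omega|/b)K_b$. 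Under primitivity $b=1$ or $b=|\Omega|$.

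For (e), suppose first that some point is open. By transitivity every point is then open and $\mathcal T$ is discrete, so there are no open points. Suppose instead that some point is closed. By transitivity all points are closed, so $X$ is $T_1$, the specialization order is equality, and $U_x=\{x\}$ for all $x$, which again makes $\mathcal T$ discrete. So there are no closed points either.

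If some $U_x={\downarrow}x$ were finite, it would have a minimal element. That element is minimal in $X$ because $U_x$ is a down-set, so it would be an open point, which is impossible. Dually, a finite $\cl\{x\}={\uparrow}x$ would contain a maximal element, which would be a closed point. Hence all $U_x$ and all $\cl\{x\}$ are infinite. Every element of $U_x\setminus\{x\}$ is $\Sp$-adjacent to $x$, so every vertex of $\Sp(X)$ has infinite valency, and in particular $\Omega$ is infinite.
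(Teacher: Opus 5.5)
Your proposal is correct. For (a), (b), (c) and (e) it is essentially the paper's argument; the one genuine difference is in (d).

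\textbf{The paper's route for (d).} It passes to the Kolmogorov quotient. The indistinguishability classes are blocks, because that relation is $G$-invariant, and $G$ acts transitively by homeomorphisms on the finite $T_0$ space $KX$. The height function on $KX$ (length of a longest chain ending at a point) is preserved by homeomorphisms, so it is constant. This forces $KX$ to be an antichain, i.e.\ discrete.

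\textbf{Your route for (d).} You notice that the minimum-cardinality argument of (c) never used primitivity, rerun it, and show directly that each $U_x$ is the translate of $U$ containing $x$. This is more elementary and unifies (c) and (d). In fact your argument gives a single statement covering both, with no finiteness assumption on $\Omega$: for any transitive $G$, a $G$-invariant topology containing a non-empty finite open set is the topology of unions of a system of finite blocks. The reason is that if an open set $O$ meets a translate $B$ of $U$, then $O\cap B$ is a non-empty finite open set, so $B\subseteq O$. Part (c) then follows by primitivity, and (d) follows since $\Omega$ itself is a finite open set.

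\textbf{What each buys.} The paper's route isolates the separate fact that a finite poset with a transitive automorphism group is an antichain. It also identifies the blocks from the start as the indistinguishability classes, which makes the appeal to Theorem~\ref{thm:hierarchy}(e) immediate. In your version you need one easy extra remark, which you essentially make: points of one block are indistinguishable, and distinct blocks are separated by clopen sets. Hence $KX$ is the discrete space of blocks, all five of its graphs are null, and the blow-up gives $(|\Omega|/b)K_b$.
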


\begin{proof}
(a) Every element of $G$ is a homeomorphism, hence an automorphism of $\Gamma(X)$ (Section~\ref{sec:hierarchy}). Since $G$ is transitive on $\Omega$, $\Gamma(X)$ is vertex-transitive, and vertex-transitive graphs are regular.

(b) The relation ``$x$ and $y$ lie in the same connected component of $\Gamma(X)$'' is an equivalence relation preserved by every automorphism of $\Gamma(X)$, in particular by $G$. By primitivity it is equality, in which case $\Gamma(X)$ is null, or universal, in which case $\Gamma(X)$ is connected. The relation $x\eqadj{\Fun}y$ is itself a $G$-invariant equivalence relation (Theorem~\ref{thm:hierarchy}(c)), so it is equality or universal, i.e.\ $\Fun(X)$ is null or complete. If $\Ins(X)$ is a disjoint union of cliques, then $\eqadj{\Ins}$ is a $G$-invariant equivalence relation and the same argument applies.

(c) Among the non-empty finite open sets choose one, $U$, of smallest cardinality. For $g\in G$ the set $gU$ is again a non-empty finite open set of the same size. If $gU\cap U\neq\emptyset$ then $gU\cap U$ is a non-empty open subset of $U$, so by minimality $gU\cap U=U$ and likewise $gU\cap U=gU$, whence $gU=U$. Thus the translates of $U$ are pairwise equal or disjoint, i.e.\ $U$ is a block of imprimitivity of the transitive group $G$. By primitivity $U$ is a singleton or $U=\Omega$. In the first case every point is open by transitivity, so $\mathcal T$ is discrete. In the second case $\Omega$ is finite and, being a minimal non-empty open set, it is the only one; so $\mathcal T$ is indiscrete.

(d) Let $\Omega$ be finite, so $X$ is Alexandrov. The indistinguishability classes are the fibres of the Kolmogorov quotient $q\colon X\to KX$; they are blocks of imprimitivity for $G$ (the relation is $G$-invariant), so they all have the same size $b$, and $G$ acts transitively by homeomorphisms on the finite $T_0$ space $KX$. For $c\in KX$ let $h(c)$ be the largest $r$ for which there is a chain $c_0<c_1<\dots<c_r=c$ in $KX$. Homeomorphisms preserve the order, hence $h$, so $h$ is constant on $KX$ by transitivity. If some $c$ were not minimal, say $c'<c$, then $h(c)\ge h(c')+1$, a contradiction. Hence $KX$ is an antichain, i.e.\ a finite discrete space. Consequently each fibre $q^{-1}(c)$ is open (as $\{c\}$ is open) and carries the indiscrete topology, and the open sets of $X$ are exactly the unions of fibres. All five graphs of a discrete space are null, so by Theorem~\ref{thm:hierarchy}(e) all five graphs of $X$ are obtained by replacing each point of $KX$ with a clique on its fibre; this is the disjoint union of $|\Omega|/b$ copies of $K_b$ in each case. If $G$ is primitive, the block system is trivial: $b=1$ (discrete) or $b=|\Omega|$ (indiscrete).

(e) Suppose $\mathcal T$ is Alexandrov and $T_0$, so that $(\Omega,\le)$ is a partial order and $\mathcal T=\mathcal A(\le)$. If some point were open, i.e.\ minimal, then by transitivity every point would be minimal, $\le$ would be equality and $\mathcal T$ would be discrete. Hence there are no minimal elements; dually (closed points are the maximal elements) there are no maximal elements. If some $U_x={\downarrow}x$ were finite, it would contain a minimal element of itself, which is minimal in $\Omega$ because $U_x$ is a down-set; this is impossible, so every $U_x$ is infinite, and dually every $\cl\{x\}={\uparrow}x$ is infinite. The neighbours of $x$ in $\Sp(X)$ include $U_x\setminus\{x\}$, so $x$ has infinite valency.
\end{proof}

Part (c) recovers the observation of~\cite[\S3]{CamV} that a minimal open set is a block of imprimitivity. For infinite $\Omega$ the following dichotomy, which is the heart of Cameron's simplified proof of the Macpherson--Praeger theorem, becomes a statement about the inseparability graph.

\begin{theorem}[~\cite{Cam-top,CamV}]\label{thm:dichotomy}
Let $G$ be a primitive permutation group on a countable set $\Omega$ and $\mathcal T$ a non-trivial, non-discrete $G$-invariant topology, $X=(\Omega,\mathcal T)$. Then exactly one of the following holds:
\begin{enumerate}[label=\textup{(\roman*)}]
\item $\Ins(X)$ is complete, i.e.\ $X$ is hyperconnected \textup{(}any two non-empty open sets meet\textup{)}; then the sets containing a non-empty open set form a non-trivial $G$-invariant filter;
\item $\Ins(X)$ has an infinite independent set \textup{(}the separation graph has an infinite clique\textup{)}; then $X$ has an infinite discrete subspace, and the complements of finite unions of discrete subsets form a non-trivial $G$-invariant filter.
\end{enumerate}
\end{theorem}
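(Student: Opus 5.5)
The approach is to study the separation graph $\operatorname{Sep}(X)=\overline{\Ins(X)}$ through the family of \emph{hyperconnected} open sets, meaning non-empty open sets any two of whose non-empty open subsets meet. The two alternatives are mutually exclusive, since a complete graph has no independent set of size $2$. So it suffices to show that if (i) fails then (ii) holds.

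\textbf{Step 1: maximal hyperconnected open sets form a block system.} If $A,B$ are hyperconnected open sets with $A\cap B\neq\emptyset$, then $A\cup B$ is hyperconnected. Indeed, a non-empty open $V\subseteq A\cup B$ meets $A$, say. Then $V\cap A$ is a non-empty open subset of $A$, so it meets the non-empty open subset $A\cap B$ of $A$. Hence $V$ meets $A\cap B$, and so it meets every non-empty open subset of $A$ and of $B$. It follows that the union of all hyperconnected open sets containing a point $x$ is the largest one containing $x$, provided some hyperconnected open set contains $x$. The union $H$ of all hyperconnected open sets is $G$-invariant. Hence $H=\emptyset$ or, by transitivity, $H=\Omega$.

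If $H=\Omega$, the maximal hyperconnected open sets partition $\Omega$, and this partition is $G$-invariant. By primitivity the blocks are singletons or $\Omega$ itself. Singleton blocks mean that every point is open, so $\mathcal T$ is discrete, which is excluded. If $\Omega$ itself is a block, then $X$ is hyperconnected, $\Ins(X)$ is complete, and we are in case (i). So if (i) fails, $H=\emptyset$: no non-empty open set is hyperconnected. Equivalently, every non-empty open $R$ contains two disjoint non-empty open subsets $R',R''$.

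\textbf{Step 2: building an infinite independent set.} I will construct inductively points $x_1,x_2,\dots$, pairwise disjoint open sets $O_n\ni x_n$, and a non-empty open remainder $R_n$ disjoint from $O_1,\dots,O_n$. Start with $R_0=\Omega$. Given $R_n$, split it by Step 1 into disjoint non-empty open sets $R'$ and $R''$. Put $O_{n+1}=R'$, pick $x_{n+1}\in R'$, and set $R_{n+1}=R''$. Then the $x_n$ have pairwise disjoint open neighbourhoods. So $\{x_n\}$ is an infinite independent set of $\Ins(X)$, an infinite clique of $\operatorname{Sep}(X)$, and an infinite discrete subspace. This gives the first half of (ii).

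\textbf{Step 3: the filters.} In case (i), the family of sets containing a non-empty open set is closed under finite intersections by hyperconnectedness, since two non-empty open sets meet in a non-empty open set. It is also upward closed, does not contain $\emptyset$, and is $G$-invariant. Its non-triviality should follow from the fact that $\mathcal T$ is neither indiscrete nor $\operatorname{Sym}(\Omega)$-invariant: if the filter were symmetric, every cofinite set, or every set of a fixed type, would contain an open set, and I would try to derive from this that $\mathcal T$ is trivial.

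In case (ii), the family of complements of finite unions of discrete subsets is clearly $G$-invariant and closed under intersection. The main obstacle is properness, namely that $\Omega$ is not a finite union of discrete subspaces. I would first try to show that any non-empty open set contained in a finite union $D_1\cup\dots\cup D_k$ of discrete sets can be shrunk, using the splitting of Step 1 and the non-discreteness of $\mathcal T$, until it meets only one $D_i$. That would produce a non-empty open set which is discrete, hence contains an isolated point, and transitivity would then force $\mathcal T$ to be discrete, a contradiction. If the shrinking argument does not go through directly, I would fall back on Cameron's argument in~\cite{CamV}, using countability of $\Omega$. Non-triviality of this filter is handled in the same way as in case (i).
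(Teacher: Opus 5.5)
Your Steps 1 and 2 are correct. They give a clean, self-contained proof of the dichotomy itself and of the infinite discrete subspace. The paper gives no proof of this theorem, only the reference to Cameron, so there is nothing to compare against. The gap is Step 3, which is a plan rather than an argument, and in case (i) the plan lacks the ingredient that makes it work.

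For the statement to have content, ``non-trivial'' for a filter must mean ``not $\operatorname{Sym}(\Omega)$-invariant''. Properness is automatic in (i), and the cofinite filter is preserved by every group. Since $\Omega$ is infinite (Theorem~\ref{thm:invariant}(d)) and countable, the only proper $\operatorname{Sym}(\Omega)$-invariant filters are $\{\Omega\}$ and the cofinite filter. The first is excluded because $\mathcal T$ is not indiscrete. The second would mean that every non-empty open set is cofinite, and non-triviality of $\mathcal T$ is \emph{not} enough to rule this out. Take $\Omega=\mathbb Z\times\{0,1\}$ with closed sets $\Omega$ and the finite unions of the blocks $\{n\}\times\{0,1\}$, and let $G$ be any transitive group preserving the blocks. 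This topology is non-trivial, non-discrete and hyperconnected, but its filter is exactly the cofinite filter.

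You need primitivity, as follows.
\begin{itemize}
\item Each $\cl\{x\}$ is a proper closed set: if one, hence every, point closure were $\Omega$, then $\mathcal T$ would be indiscrete. So $\cl\{x\}$ is the complement of a non-empty open, hence cofinite, set, and is therefore finite.
\item By Proposition~\ref{prop:primitive-T0}(a), $X$ is $T_0$. So $\cl\{x\}={\uparrow}x$ is a finite up-set, and its maximal elements are closed points.
\item By transitivity $X$ is $T_1$. Then every cofinite set is open, so $\mathcal T$ is the cofinite topology, which is trivial.
\end{itemize}

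In case (ii) your shrinking idea does go through. The splitting from Step 1 is exactly what is needed, since in a non-$T_1$ space you cannot just delete a point.
\begin{itemize}
\item Let $W\neq\emptyset$ be open with $W\subseteq D_1\cup\dots\cup D_k$, each $D_i$ discrete and $k$ minimal. Pick $w\in W$, say $w\in D_1$, and an open $V\ni w$ with $V\cap D_1=\{w\}$.
\item Split $W\cap V$ into two disjoint non-empty open sets. The one not containing $w$ misses $D_1$, so it lies in $D_2\cup\dots\cup D_k$. This is impossible if $k=1$ and contradicts minimality otherwise.
\item Hence $\Omega$ is not a finite union of discrete sets, and the family is a proper filter. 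It is upward closed because subsets of discrete sets are discrete.
\end{itemize}
Its non-triviality is not obtained ``in the same way as in case (i)''. It comes from Step 2: the filter contains $\Omega\setminus\{x_n:n\ge1\}$, which is not cofinite, so it is neither $\{\Omega\}$ nor the cofinite filter. No fallback to countability or to Cameron's argument is needed here.
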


We refer to~\cite{Cam-top} for the proof. Combined with Theorem~\ref{thm:hierarchy}(f), for relational topologies it takes the following purely order-theoretic form.

\begin{corollary}\label{cor:dichotomy-alex}
Let $G$ be a primitive group of automorphisms of a countable partial order $(\Omega,\le)$ which is not an antichain. Then either any two elements of $\Omega$ have a common lower bound, or there is an infinite subset of $\Omega$ no two elements of which have a common lower bound. The same holds with ``upper'' in place of ``lower''.
\end{corollary}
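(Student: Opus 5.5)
The plan is to apply Theorem~\ref{thm:dichotomy} to the Alexandrov topology $\mathcal A(\le)$ and then translate its two alternatives with Theorem~\ref{thm:hierarchy}(f). Let $X=(\Omega,\mathcal A(\le))$, whose open sets are the down-sets. Since $G$ consists of order automorphisms, it maps down-sets to down-sets, so $\mathcal A(\le)$ is $G$-invariant. Its specialization preorder is $\le$, and $\le$ is antisymmetric, so $X$ is $T_0$.

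Before invoking the dichotomy we must check its hypotheses. \emph{Non-discreteness:} because $\le$ is not an antichain there are $x<y$, and then $\{y\}$ is not a down-set, hence not open. \emph{Non-triviality:} a permutation preserving $\mathcal A(\le)$ preserves its specialization preorder $\le$. The relation $\le$ is neither equality (it is not an antichain) nor universal (it is antisymmetric and $|\Omega|\ge2$), so by Lemma~\ref{lem:trivial-rel} it is not $\operatorname{Sym}(\Omega)$-invariant. Hence some permutation fails to preserve $\mathcal A(\le)$, and the topology is non-trivial.

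Theorem~\ref{thm:dichotomy} then says that either $\Ins(X)$ is complete or $\Ins(X)$ has an infinite independent set. By Theorem~\ref{thm:hierarchy}(f), $x\sim_{\Ins}y$ iff $U_x\cap U_y\neq\emptyset$, that is, iff $x$ and $y$ have a common lower bound. So completeness of $\Ins(X)$ says that any two distinct elements have a common lower bound; for $x=y$ the element $x$ itself is one. An infinite independent set is exactly an infinite subset in which no two elements have a common lower bound.

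For the ``upper'' version we apply the same argument to the opposite order $\ge$, which has the same automorphism group (so $G$ is still primitive on it) and is still not an antichain. The only points needing care are the non-triviality check and the remark that Theorem~\ref{thm:dichotomy} is used as a black box; primitivity of $G$ is the only group hypothesis required. I expect no real obstacle here beyond confirming that the countability hypothesis is inherited, which it is, since $\Omega$ is unchanged.
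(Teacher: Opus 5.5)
Your proposal is correct and follows essentially the same route as the paper: you check that $\mathcal A(\le)$ is a $G$-invariant, $T_0$, non-discrete, non-trivial topology, apply Theorem~\ref{thm:dichotomy}, translate inseparability into common lower bounds via Theorem~\ref{thm:hierarchy}(f), and pass to the opposite order for the ``upper'' version. Your non-triviality check, that $\le$ is neither equality nor universal and hence not $\operatorname{Sym}(\Omega)$-invariant by Lemma~\ref{lem:trivial-rel}, is a direct version of the paper's contradiction argument.
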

\begin{proof}
The Alexandrov topology $\mathcal A(\le)$ is $G$-invariant, $T_0$ and non-discrete; it is non-trivial because a trivial non-discrete $T_0$ topology would be $\operatorname{Sym}(\Omega)$-invariant with a $\operatorname{Sym}(\Omega)$-invariant specialization order, which by Lemma~\ref{lem:trivial-rel} is equality, forcing discreteness. By Theorem~\ref{thm:hierarchy}(f), $x\sim_{\Ins}y$ iff $x,y$ have a common lower bound; now apply Theorem~\ref{thm:dichotomy}. For upper bounds apply this to the opposite order, which $G$ also preserves.
\end{proof}

\begin{example}[The generic poset]\label{ex:generic}
Let $P$ be the countable universal homogeneous partial order (the Fra\"iss\'e limit of the finite posets~\cite{CamV}), with its Alexandrov topology. $P$ has the \emph{extension property}: for disjoint finite sets $A,B,C\subseteq P$ with every element of $A$ below every element of $B$, there is $z$ with $A<z<B$ and $z$ incomparable to all of $C$ (provided this is consistent). We claim:
\begin{enumerate}[label=(\roman*)]
\item $\operatorname{Aut}(P)$ is primitive but not strongly primitive;
\item $\Ins(P)=\Cm(P)=\Ury(P)=\Fun(P)$ is complete, while $\Sp(P)$ is the comparability graph of $P$, which is neither complete nor null.
\end{enumerate}
By homogeneity $\operatorname{Aut}(P)$ is transitive on points and on ordered pairs of each of the three types $x<y$, $x>y$, $x\perp y$. Let $E$ be an invariant equivalence relation other than equality. If $E$ contains a comparable pair, it contains all comparable pairs; any $x,y$ have a common upper bound $z$ (extension property), so $x\,E\,z\,E\,y$ and $E$ is universal. If $E$ contains an incomparable pair, it contains all of them; for $x<y$ choose $z$ incomparable to both, so again $x\,E\,z\,E\,y$. Thus $\operatorname{Aut}(P)$ is primitive; it is not strongly primitive since it preserves $\le$. For (ii), any two points have a common lower bound and a common upper bound, so $\Ins(P)$ and $\Cm(P)$ are complete, hence so are $\Ury(P)$ and $\Fun(P)$; and $P$ has both comparable and incomparable pairs. This illustrates Theorem~\ref{thm:invariant}(e) (no open or closed points), and alternative (i) of Theorem~\ref{thm:dichotomy}.

The \emph{generic preorder} $\tilde P$, obtained by replacing each point of $P$ by a countably infinite set of indistinguishable points~\cite{CamV}, has $K\tilde P=P$; by Theorem~\ref{thm:hierarchy}(e) its graphs are obtained from those of $P$ by blowing up vertices into infinite cliques. Its automorphism group is imprimitive and its topology is not $T_0$, as Proposition~\ref{prop:primitive-T0}(a) predicts. By contrast, for the \emph{rational world}, $\mathbb Q$ with its usual topology~\cite{Neumann-rw}, all five graphs are null since $\mathbb Q$ is metrizable; by Mekler's theorem~\cite{Mekler}, every countable group acting on a countable set and satisfying Mekler's condition~\cite[Proposition~4.1(a)]{CamV} acts faithfully on a countable space all of whose five graphs are null.
\end{example}

\subsection{Neighbourhood filters}
For a graph $\Gamma$ on a countable set $\Omega$, the \emph{neighbourhood filter} $\mathcal F_\Gamma$ is the filter generated by the open neighbourhoods $\Gamma(v)=N(v)$, i.e.\ the family of sets containing a finite intersection of neighbourhoods~\cite{CamV}. It is \emph{non-trivial} if it does not contain $\emptyset$, i.e.\ if any finitely many vertices have a common neighbour. Let $R$ denote the Rado graph~\cite{Cam-random}. Cameron proved~\cite[Theorem~5.2]{CamV} that for a countable graph $\Gamma$ the following are equivalent: $\mathcal F_\Gamma$ is non-trivial; $\Gamma$ contains $R$ as a spanning subgraph; $\mathcal F_\Gamma\subseteq\mathcal F_R$ for a suitable copy of $R$ on $\Omega$. We apply this to graphs on countable spaces.

\begin{theorem}\label{thm:filters}
Let $X$ be a countably infinite topological space.
\begin{enumerate}[label=\textup{(\alph*)}]
\item The filters are reverse-ordered along the hierarchy: $\mathcal F_{\Fun(X)}\subseteq\mathcal F_{\Ury(X)}\subseteq\mathcal F_{\Ins(X)}\cap\mathcal F_{\Cm(X)}$ and $\mathcal F_{\Ins(X)}\cup\mathcal F_{\Cm(X)}\subseteq\mathcal F_{\Sp(X)}$. If one of them is non-trivial, so are all those above it in the hierarchy; in particular $\Fun(X)$, $\Ury(X)$ contain the Rado graph as a spanning subgraph whenever $\Ins(X)$ or $\Cm(X)$ does.
\item $\mathcal F_{\Fun(X)}$ is non-trivial if and only if every continuous function $X\to\R$ is constant; in that case $\mathcal F_{\Fun(X)}$ is the cofinite filter.
\item If $\mathcal F_{\Gamma(X)}$ is non-trivial for some $\Gamma$ in the hierarchy, then every continuous real function on $X$ is constant; in particular $X$ is connected, and $X$ is not $T_1$ if $\Gamma\in\{\Sp,\Cm\}$, not Hausdorff if $\Gamma=\Ins$ and not Urysohn if $\Gamma=\Ury$.
\item If $X$ is hyperconnected then $\mathcal F_{\Ins(X)}$, $\mathcal F_{\Ury(X)}$ and $\mathcal F_{\Fun(X)}$ all equal the cofinite filter.
\item Let $X$ be Alexandrov and $T_0$, i.e.\ a countable poset. If $X$ is downward directed and has no minimal elements, then $\mathcal F_{\Sp(X)}$ is non-trivial, so the comparability graph $\Sp(X)$ contains the Rado graph as a spanning subgraph, while $\Ins(X)$ is complete. If moreover some element of $X$ is incomparable to infinitely many others, then $\mathcal F_{\Ins(X)}\subsetneq\mathcal F_{\Sp(X)}$.
\end{enumerate}
\end{theorem}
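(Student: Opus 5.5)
The whole theorem rests on one elementary observation about neighbourhood filters. If $E(\Gamma)\subseteq E(\Delta)$ on the same vertex set, then $\Gamma(v)\subseteq\Delta(v)$ for every $v$. Hence every finite intersection of $\Gamma$-neighbourhoods lies inside the corresponding intersection of $\Delta$-neighbourhoods, and so $\mathcal F_\Delta\subseteq\mathcal F_\Gamma$.

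\emph{Part (a).} Applying this observation to the inclusions of Theorem~\ref{thm:hierarchy}(a) gives all the stated filter inclusions directly. Non-triviality passes upward: if $\emptyset\notin\mathcal F_\Gamma$, then $\emptyset\notin\mathcal F_\Delta\subseteq\mathcal F_\Gamma$. The Rado graph statement then follows from Cameron's theorem~\cite[Theorem~5.2]{CamV}.

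\emph{Part (b).} By Theorem~\ref{thm:hierarchy}(c), $\Fun(X)$ is a disjoint union of cliques. If there are two classes, take $v,w$ in different classes; then $\Fun(v)\cap\Fun(w)=\emptyset$ and the filter is trivial. If there is one class, then $\Fun(X)$ is complete and $\Fun(v)=X\setminus\{v\}$. Finite intersections of these sets are exactly the cofinite sets, so the filter is the cofinite filter, which is non-trivial because $X$ is infinite. It remains to note that $\Fun(X)$ is complete iff all continuous real functions are constant.

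\emph{Part (c).} By (a), non-triviality of any $\mathcal F_{\Gamma(X)}$ gives non-triviality of $\mathcal F_{\Fun(X)}$, and (b) then says every continuous real function is constant. Connectedness follows because the characteristic function of a clopen set is continuous. A non-trivial filter forces $\Gamma(X)$ to have edges, so $\Gamma(X)$ is not null, and Theorem~\ref{thm:hierarchy}(b) gives the failures of the separation axioms.

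\emph{Part (d).} If $X$ is hyperconnected, any two non-empty open sets meet, so $\Ins(X)$ is complete. By Theorem~\ref{thm:hierarchy}(a), $\Ury(X)$ and $\Fun(X)$ are then complete too. The complete-graph computation from (b) shows all three filters equal the cofinite filter.

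\emph{Part (e).} This is where the real work lies.
\begin{itemize}
\item \textbf{$\Ins(X)$ is complete.} By Theorem~\ref{thm:hierarchy}(f), $x\sim_{\Ins}y$ iff $x$ and $y$ have a common lower bound, and downward directedness provides one.
\item \textbf{$\mathcal F_{\Sp(X)}$ is non-trivial.} Given finitely many $x_1,\dots,x_n$, downward directedness gives a common lower bound $z$. Since $X$ has no minimal elements, there is an infinite descending chain below $z$, so we can pick $z'<z$ with $z'\ne x_i$ for all $i$ (only finitely many points to avoid). Then $z'<x_i$ for every $i$, so $z'$ is a common $\Sp$-neighbourhood. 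Cameron's theorem gives the Rado spanning subgraph.
\item \textbf{Strictness.} By (d) applied to the complete graph $\Ins(X)$, $\mathcal F_{\Ins(X)}$ is the cofinite filter. If $x$ is incomparable to infinitely many points, then $\Sp(x)$ lies in $\mathcal F_{\Sp(X)}$ but has infinite complement, so it is not in $\mathcal F_{\Ins(X)}$. The inclusion $\mathcal F_{\Ins(X)}\subseteq\mathcal F_{\Sp(X)}$ is already part of (a).
\end{itemize}

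The only genuinely delicate step is choosing a common neighbour distinct from the given finite set in (e). This is exactly where "no minimal elements" is needed, rather than just directedness.
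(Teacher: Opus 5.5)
Your proposal is correct and follows the paper's proof essentially step for step. Monotonicity of neighbourhood filters under edge inclusion drives (a)--(d), and in (e) you use the same two moves: descend below a common lower bound to find a common $\Sp$-neighbour outside the given finite set, and witness strictness by the non-cofinite set $\Sp(x)$.
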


\begin{proof}
(a) If $E(\Gamma)\subseteq E(\Gamma')$ then $\Gamma(v)\subseteq\Gamma'(v)$ for every $v$. Every generator $\Gamma'(v)$ of $\mathcal F_{\Gamma'}$ therefore contains the generator $\Gamma(v)$ of $\mathcal F_{\Gamma}$, so $\mathcal F_{\Gamma'}\subseteq\mathcal F_{\Gamma}$; and if every finite intersection $\Gamma(v_1)\cap\dots\cap\Gamma(v_n)$ is non-empty, so is $\Gamma'(v_1)\cap\dots\cap\Gamma'(v_n)$. Now apply Theorem~\ref{thm:hierarchy}(a), and Cameron's theorem for the statement about the Rado graph.

(b) By Theorem~\ref{thm:hierarchy}(c), $\Fun(X)$ is a disjoint union of cliques. If it has two classes $C\ne C'$, then vertices $v\in C$, $v'\in C'$ have no common neighbour, so $\emptyset=\Fun(v)\cap\Fun(v')\in\mathcal F_{\Fun}$. If there is only one class, then $\Fun(X)$ is complete (all continuous real functions are constant), $\Fun(v)=\Omega\setminus\{v\}$, and the finite intersections of these sets are exactly the cofinite sets; as $\Omega$ is infinite they are non-empty, and the generated filter is the cofinite filter.

(c) By (a), non-triviality of $\mathcal F_{\Gamma(X)}$ implies non-triviality of $\mathcal F_{\Fun(X)}$, hence by (b) that every continuous real function is constant. If $X=A\sqcup B$ with $A,B$ non-empty and open, the characteristic function of $A$ is continuous and non-constant; so $X$ is connected. A non-trivial neighbourhood filter forces the graph to have edges (indeed every vertex has a neighbour), and Theorem~\ref{thm:hierarchy}(b) gives the remaining statements.

(d) If any two non-empty open sets meet, then any two points are inseparable, so $\Ins(X)$ is complete; by Theorem~\ref{thm:hierarchy}(a) so are $\Ury(X)$ and $\Fun(X)$, and the filter generated by the sets $\Omega\setminus\{v\}$ is the cofinite filter, as in (b).

(e) Let $A\subseteq X$ be finite and non-empty. By downward directedness $A$ has a lower bound $b$. Since $X$ has no minimal elements, there is an infinite strictly descending chain $b>b_1>b_2>\cdots$; as $A$ is finite, some $b_i\notin A$. Then $b_i<a$ for all $a\in A$, so $b_i$ is a neighbour of every $a\in A$ in $\Sp(X)$. Hence finite intersections of neighbourhoods in $\Sp(X)$ are non-empty, $\mathcal F_{\Sp(X)}$ is non-trivial, and by Cameron's theorem $\Sp(X)$ contains $R$ as a spanning subgraph. Any two points have a common lower bound, so $\Ins(X)$ is complete (Theorem~\ref{thm:hierarchy}(f)) and $\mathcal F_{\Ins(X)}$ is the cofinite filter by (d). Finally, if $v$ is incomparable to infinitely many elements, then $\Sp(v)$ is not cofinite, so $\Sp(v)\in\mathcal F_{\Sp(X)}\setminus\mathcal F_{\Ins(X)}$.
\end{proof}

\begin{example}
The generic poset $P$ of Example~\ref{ex:generic} satisfies the hypotheses of Theo\-rem~\ref{thm:filters}(e). Therefore its comparability graph $\Sp(P)$, all of whose finite induced subgraphs are perfect (they are comparability graphs), contains the Rado graph (which is universal, hence far from perfect) as a spanning subgraph; and
\[
\mathcal F_{\Fun(P)}=\mathcal F_{\Ury(P)}=\mathcal F_{\Ins(P)}=\mathcal F_{\Cm(P)}=\text{cofinite filter}\subsetneq\mathcal F_{\Sp(P)} .
\]
This is a topological instance of Cameron's observation~\cite[\S5]{CamV} that nested spanning subgraphs containing $R$ give nested neighbourhood filters. The integer lattice $\mathbb Z^n$ ($n\ge2$) with the product order is another example.
\end{example}

\begin{remark}\label{rem:countable-XGamma}
Replacing each ray $(0,1)$ in Definition~\ref{def:XGamma} by $(0,1)\cap\mathbb Q$ yields a \emph{countable} $T_1$ space $X^{\mathbb Q}_\Gamma$ for which Theorem~\ref{thm:XGamma}(a)--(d) hold with the same proofs. Taking $\Gamma=R$ gives a countable space whose inseparability graph is the Rado graph together with countably many isolated vertices.Because of the isolated vertices its neighbourhood filter is trivial. Whether the isolated vertices can be removed, that is, whether some countable topological space has inseparability graph isomorphic to the Rado graph, remains open.
\end{remark}

\section{Computational data}\label{sec:data}

We enumerated all finite $T_0$ spaces with at most $7$ points up to homeomorphism (equivalently, all posets up to isomorphism; there are $1,2,5,16,63,318,2045$ of them) and computed the five graphs. The identity $\Ury(X)=\Ins(X)\sq$, all statements of Theorems~\ref{thm:realization}, \ref{thm:equality}, \ref{thm:homotopy}(a),(b) (the latter at the level of rational homology) and Propositions~\ref{prop:alpha}, \ref{prop:distances} were verified by computer for all these spaces, and all graphs on at most $7$ vertices were tested against the criterion of Theorem~\ref{thm:realization}. Some of the resulting counts are shown in Table~\ref{tab:data}. 

\begin{table}[ht]
\centering
\begin{tabular}{@{}lrrrrrrr@{}}
\toprule
$n$ & 1 & 2 & 3 & 4 & 5 & 6 & 7\\
\midrule
$T_0$ spaces on $n$ points & 1 & 2 & 5 & 16 & 63 & 318 & 2045\\
graphs on $n$ vertices & 1 & 2 & 4 & 11 & 34 & 156 & 1044\\
graphs $\cong\Ins(X)$, $|X|=n$ & 1 & 2 & 4 & 9 & 21 & 56 & 164\\
spaces with $\Sp=\Ins$ & 1 & 2 & 4 & 9 & 20 & 48 & 115\\
spaces with $\Ins=\Ury\,(=\Fun)$ & 1 & 2 & 4 & 10 & 28 & 100 & 446\\
spaces with $\Ins(X)$ a cograph & 1 & 2 & 5 & 16 & 62 & 304 & 1859\\
spaces with $\Ins(X)$ chordal & 1 & 2 & 5 & 16 & 63 & 318 & 2045\\
\bottomrule
\end{tabular}
\caption{Counts for finite $T_0$ spaces with $n\le7$ points. The third row counts isomorphism classes of graphs on $n$ vertices which are inseparability graphs of $T_0$ spaces with $n$ points; by Theorem~\ref{thm:realization} these are exactly the simplicially covered graphs. The last row illustrates Proposition~\ref{prop:holes}.}
\label{tab:data}
\end{table}

The smallest spaces whose inseparability graph is not a cograph have $5$ points, and among the $63$ five-point $T_0$ spaces exactly one fails: the ``$W$-shaped'' space with open points $m_1,m_2,m_3$ and closed points $x_1>m_1,m_2$ and $x_2>m_2,m_3$. Here $M(x_1)=\{m_1,m_2\}$ and $M(x_2)=\{m_2,m_3\}$, so $m_1\sim x_1\sim x_2\sim m_3$ in $\Ins$, while $m_1\not\sim x_2$, $x_1\not\sim m_3$ and $m_1\not\sim m_3$: the four points induce $P_4$. By contrast, Proposition~\ref{prop:holes} shows that $\Ins(X)$ is chordal for all spaces with at most $7$ points, as confirmed by the last row of the table.\\
The questions studied in this paper were inspired by the programme on graphs defined on algebraic structures developed in~\cite{Cam-groups,ACNS,Cam-AKCE,CDD}, and Section~\ref{sec:groups} by Cameron's work on topologies, filters and the Rado graph~\cite{CamV,Cam-top}.

\subsection*{Acknowledgement}
The second author thanks the management of Muthoot Institute of Technology and Science, Kochi, Kerala, India for providing the facilities to conduct the research.

\end{document}